\renewcommand{\thispagestyle}[1]{} 
\renewcommand{\qed}{$\blacksquare$}
\newcommand{\dia}{$\diamond$}
\newtheorem{thm}{Theorem}[section]
\newtheorem{nota}[thm]{Notation}
\newtheorem{cor}[thm]{Corollary}
\newtheorem{lem}[thm]{Lemma}
\newtheorem{dfn}[thm]{Definition} 
\newtheorem{rem}[thm]{Remark}
\newcommand{\eps}{\varepsilon}
\newcommand{\vol}{\mathrm{Vol}}
\newcommand{\np}{\mathbf{NP}}
\newcommand{\C}{\mathbb{C}}
\newcommand{\N}{\mathbb{N}}
\newcommand{\Pro}{\mathbb{P}}
\newcommand{\R}{\mathbb{R}}
\newcommand{\Z}{\mathbb{Z}}
\newcommand{\bO}{\mathbf{O}}
\newcommand{\cL}{\mathcal{L}}
\newcommand{\cM}{\mathcal{M}} 
\newcommand{\cN}{\mathcal{N}}
\newcommand{\cX}{\mathcal{X}}
\DeclarePairedDelimiter\abs{\lvert}{\rvert}
\DeclarePairedDelimiter\norm{\lVert}{\rVert}
\begin{document}

\title[Condition Number Estimates for Real Polynomial Systems I]{\mbox{}
\vspace{-1in}\\ 
Probabilistic 
Condition Number Estimates for Real Polynomial Systems I: A Broader Family of 
Distributions} 

\author{Alperen Erg\"ur}
\address{ Technische Universit\"at Berlin, Institut f\"ur Mathematik, Sekretariat MA 3-2, Straße des 17. Juni 136, 10623, Berlin, Germany }
\email{erguer@math.tu-berlin.de}
\author{Grigoris Paouris}
\address{Department of Mathematics,
Texas A\&M University TAMU 3368,
College Station, Texas \ 77843-3368, USA.}
\email{grigoris@math.tamu.edu}
\author{J.\ Maurice Rojas}
\address{Department of Mathematics,
Texas A\&M University TAMU 3368,
College Station, Texas \ 77843-3368, USA.}
\email{rojas@math.tamu.edu}
\thanks{A.E.\ was partially supported by NSF grant CCF-1409020 and NSF CAREER
grant DMS-1151711. G.P.\ was partially supported by BSF grant 2010288 and NSF 
CAREER grant DMS-1151711. J.M.R.\ was partially supported by NSF grant 
CCF-1409020.}

\begin{abstract} 
We consider the sensitivity of real roots of polynomial systems 
with respect to perturbations of the coefficients. In particular ---  
for a version of the condition number defined 
by Cucker and used later by Cucker, Krick, Malajovich, and Wschebor --- we 
establish new probabilistic 
estimates that allow a much broader family of measures than considered earlier. 
We also generalize further by allowing over-determined systems. 

In Part II, we study smoothed complexity and how sparsity 
(in the sense of restricting which 
monomial terms can appear) can help further improve earlier condition number 
estimates. 
\end{abstract}  

\maketitle 

\vspace{-.3in} 
\section{Introduction}
When designing algorithms for polynomial system solving, it quickly 
becomes clear that complexity is governed by more than simply the number 
of variables and degrees of the equations. Numerical solutions are 
meaningless without further information on the spacing of the roots, 
not to mention their sensitivity to perturbation. A mathematically 
elegant means of capturing this sensitivity is the notion of 
{\em condition number} (see, e.g., \cite{bcss,cond} and our discussion  
below). 

A subtlety behind complexity bounds incorporating the condition number 
is that\linebreak computing the condition number, even within a large 
multiplicative error, is provably as hard as computing the numerical 
solution one seeks in the first place (see, e.g., \cite{demmel} for a precise 
statement in the linear case). However, it is now known 
that the condition number admits {\em probabilistic} bounds, thus 
enabling its use in average-case analysis, high probability analysis, 
and smoothed analysis of the complexity of numerical algorithms. In fact, this 
probabilistic 
approach has revealed (see, e.g., \cite{beltran,derand,lairez}) that, in 
certain settings, numerical solving can be done in polynomial-time on 
average, even though numerical solving has exponential worst-case   
complexity. 

The numerical approximation of complex roots provides an instructive example 
of how one can profit from randomization. 

First, there are classical 
reductions showing that deciding the existence of complex roots for 
systems of polynomials in $\bigcup_{m,n\in\N} (\Z[x_1,\ldots,x_n])^m$ 
is already $\np$-hard. However, classical algebraic geometry (e.g., Bertini's 
Theorem and B\'ezout's Theorem \cite{shafa})  
tells us that, with probability $1$, the number of complex roots of a 
{\em random} system of homogeneous polynomials, $P\!:=\!(p_1,\ldots,p_m)\!\in\!
\C[x_1,\ldots,x_n]$ (with each $p_i$ having fixed positive degree $d_i$), 
is $0$, $\prod^n_{i=1}d_i$, or infinite, according as $m\!>\!n-1$, $m\!=\!n-1$, 
or $m\!<\!n-1$. (Any probability measure on the coefficient space, absolutely 
continuous with respect to Lebesgue measure, will do in the preceding 
statement.)  

Secondly, examples like $P\!:=\!(x_1-x^2_2,x_2-x^2_3,\ldots,
x_{n-1}-x^2_n,(2x_n-1)(3x_n-1))$, which has affine roots 
$\left(2^{-2^{n-1}}, \ldots,2^{-2^0}\right)$ and $\left(3^{-2^{n-1}},
\ldots,3^{-2^0}\right)$, reveal that the number of digits of accuracy necessary 
to distinguish the coordinates of roots of $P$ may be exponential in $n$ 
(among other parameters). However, it is now known via earlier work on 
discriminants and random polynomial systems (see, e.g., \cite[Thm.\ 5]{pardo}) 
that the number of digits needed to separate roots of 
$P$ is polynomial in $n$ {\em with high probability}, assuming the 
coefficients are rational, and the polynomial degrees and coefficient 
heights are bounded. More simply, a classical observation from 
the theory of resultants (see, e.g., \cite{cannyphd}) is that,  
for any positive continuous probability measure on the coefficients, 
$P$ having a root with Jacobian matrix possessing small determinant is a rare 
event. So, with high probability, small perturbations of a $P$ with no 
degenerate roots should still have no degenerate roots. 
More precisely, we review below a version of the condition number 
used in \cite{SS,beltran,lairez}. Recall that the {\em singular 
values} of a matrix $T\!\in\!\R^{k\times (n-1)}$ are the (nonnegative) 
square roots of the eigenvalues of $T^\top T$, where $T^\top$ denotes the 
transpose of $T$.   
\begin{dfn}
Given $n,d_1,\ldots,d_m\!\in\!\N$ and $i\!\in\!\{1,\ldots,m\}$, 
let $N_i\!:=\!\binom{n+d_i-1}{d_i}$ and, 
for any homogenous polynomial $p_i\!\in\!\R[x_1,\ldots,x_n]$ with 
$\deg p_i\!=\!d_i$, note that the number of monomial terms of $p_i$ 
is at most $N_i$. Letting $\alpha\!:=\!(\alpha_1,\ldots,\alpha_n)$ and 
$x^\alpha\!:=\!x^{\alpha_1}_1\cdots x^{\alpha_n}_n$, let $c_{i,\alpha}$ 
denote the coefficient of $x^\alpha$ in $p_i$, and set $P\!:=\!(p_1,
\ldots,p_m)$. Also, let us define the {\em Weyl-Bombieri norms} of $p_i$ and 
$P$ to be, respectively,\\ 
\mbox{}\hfill  
$\|p_i\|_W\!:=\!\sqrt{\sum\limits_{\alpha_1+\cdots+\alpha_n=d_i} 
\frac{|c_{i,\alpha}|^2}{\binom{d_i}{\alpha}}}$ and 
$\|P\|_W\!:=\!\sqrt{\sum\limits^m_{i=1} \|p_i\|^2_W}$.\hfill \mbox{}\\   
Let $\Delta_m\!\in\!\R^{m\times m}$ be the diagonal matrix 
with diagonal entries $\sqrt{d_1},\ldots,\sqrt{d_m}$ and let\linebreak  
$DP(x)|_{T_x S^{n-1}} : T_x S^{n-1} \longrightarrow \R^m$ 
denote the linear map between tangent spaces induced by the 
Jacobian matrix of the polynomial system $P$ evaluated at the point $x$. 
Finally, when $m\!=\!n-1$, we define the {\em (normalized) 
local condition number (for solving $P\!=\!\bO$)}  
to be $\tilde{\mu}_\mathrm{norm}(P,x):=\norm{P}_{W}\sigma_\mathrm{max}
\!\left(DP(x)|^{-1}_{T_x S^{n-1}}\Delta_{n-1}\right)$ or 
$\tilde{\mu}_\mathrm{norm}
(P,x)\!:=\!\infty$, according as $DP(x)|_{T_x S^{n-1}}$ is invertible or not, 
where $\sigma_{\mathrm{max}}(A)$ is the largest singular value of a matrix 
$A$. \dia  
\end{dfn} 

\noindent 
Clearly, $\tilde{\mu}_\mathrm{norm}(P,x)\rightarrow \infty$ as $P$ approaches 
a system possessing a degenerate root $\zeta\!\in\!\Pro^{n-1}_\C$ and $x$ 
approaches $\zeta$. The intermediate normalizations in the definition are 
useful for geometric interpretations of $\tilde{\mu}_\mathrm{norm}$: There 
is in fact a natural metric $\|\cdot\|_W$ 
(reviewed in Section \ref{sec:back} and 
Theorem \ref{thm:dist} below), on the 
space of coefficients, yielding a simple and elegant algebraic relation 
between $\|P\|_W$, $\sup_{x \in S^{n-1}} \tilde{\mu}_\mathrm{norm}(P,x)$, 
and the distance of $P$ to a certain discriminant variety (see also 
\cite{M2}). But even more importantly, the preceding condition 
number (in the special case $m\!=\!n-1$) was a central ingredient in the recent 
positive solution to {\em Smale's 17th 
Problem} \cite{beltran,lairez}: For the problem of numerically approximating a 
{\em single} complex root of a polynomial system, a particular randomization 
model (independent complex Gaussian coefficients with specially chosen 
variances) enables {\em polynomial-time} average-case complexity, in the face  
of exponential deterministic complexity.\footnote{Here, ``complexity''  
simply means the total number of field operations over $\C$ needed to 
find a start point $x_0$ for Newton iteration,  
such that the sequence of Newton iterates $(x_n)_{n\in\N}$ converges to a 
true root $\zeta$ of $P$ (see, e.g., 
\cite[Ch.\ 8]{bcss}) at the rate of 
$|x_n-\zeta|\!\leq\!(1/2)^{2^{n-1}}|x_0-\zeta|$ or faster. } 

\subsection{From Complex Roots to Real Roots} 
It is natural to seek similar average-case speed-ups for the harder problem of 
numerically 
approximating real roots of real polynomial systems. However, an important 
subtlety one must consider is that the number of real roots of $n-1$ 
homogeneous polynomials in $n$ variables (of fixed degree) 
is no longer constant with probability $1$, even if the probability measure 
for the coefficients is continuous and positive. Also, small perturbations 
can make the number of real roots of a polynomial system go from 
positive to zero or even infinity. 
A condition number for real solving that takes all these subtleties into 
account was developed in \cite{cucker} and applied in the seminal series of 
papers \cite{M1,M2,M3}. In these papers, the authors performed a probabilistic 
analysis assuming the coefficients were independent real Gaussians with 
mean $0$ and very specially chosen variances.  
\begin{dfn} \cite{cucker} 
Let $\tilde{\kappa}(P,x):= \frac{\norm{P}_{W}}{\sqrt{\|P\|^2_W 
\tilde{\mu}_\mathrm{norm}(P,x)^{-2} 
+\norm{P(x)}_2^2}}$ and 
$\tilde{\kappa}(P):=\sup\limits_{x \in S^{n-1}} \tilde{\kappa}(P,x)$. 
We respectively call $\tilde{\kappa}(P,x)$ and $\tilde{\kappa}(P)$ the 
{\em local} and {\em global} condition numbers for real solving. \dia 
\end{dfn} 

\noindent 
Note that a large condition number for real solving can be caused not only 
by a root with small Jacobian determinant, but also by the existence of a 
critical point for $P$ with small corresponding critical value. So a large 
$\tilde{\kappa}$ is meant to detect the spontaneous 
creation of real roots, as well as the bifurcation of a single degenerate root 
into multiple distinct real roots, arising from small perturbations of 
the coefficients.  

Our main results, Theorems \ref{gcondition} and \ref{expectation} in 
Section \ref{sec:main1} below, show that useful condition number estimates can  
be derived for a much broader class of probability measures than considered 
earlier: Our theorems allow non-Gaussian distributions,  
dependence between certain coefficients, and, unlike the existing 
literature, our methods do not use any additional 
algebraic structure, e.g., invariance under the unitary group acting linearly 
on the variables (as in \cite{SS,M1,M2,M3}). This aspect also allows us to 
begin to address sparse polynomials (in the sequel to this paper), where 
linear changes of variables would destroy sparsity. Even better, our framework 
allows over-determined systems. 

To compare our results with earlier estimates, 
let us first recall a central estimate from \cite{M3}.  
\begin{thm} \label{CKMW} \cite[Thm.\ 1.2]{M3} 
Let $P:=(p_1,\ldots,p_{n-1})$ be a random system of homogenous $n$-variate 
polynomials where $n\!\geq\!3$ and 
$p_i(x):=\sum\limits_{\alpha_1+\cdots+\alpha_n=d_i} 
\sqrt{\binom{d_i}{\alpha}} c_{i,\alpha} x^{\alpha}$ where  
the $c_{i,\alpha}$ are independent real 
Gaussian random variables having mean $0$ and variance $1$. Then, 
letting $N:=\sum^{n-1}_{i=1}\binom{n+d_i-1}{d_i}$, $d:=\max_i d_i$, 
$M':=1+8d^2\sqrt{(n-1)^5 N \prod^{n-1}_{i=1}d_i}$, and $t\!\geq\!\sqrt{
\frac{n-1}{4\prod^{n-1}_{i=1}d_i}}$, we have:\\  
\mbox{}\hspace{1in}1. $\mathrm{Prob}(\tilde{\kappa}(P) \geq t M') \leq \frac{
\sqrt{1+\log(tM')}}{t}$\\ 
\mbox{}\hspace{1in}2. $\mathbb{E}(\log(\tilde{\kappa}(P))) \leq \log(M') + 
\sqrt{\log M'}+\frac{1}{\sqrt{\log M'}}$. 
\end{thm}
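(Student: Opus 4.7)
The strategy I would follow is the standard one in condition-number analysis: express $\tilde{\kappa}(P)$ as the reciprocal of a distance in coefficient space, exploit the orthogonal invariance of the specific Gaussian law chosen in the statement to reduce a volume-of-tube computation to a single base point on $S^{n-1}$, and then control the resulting local event by Gaussian matrix estimates. The variance scaling $c_{i,\alpha}\sim\mathcal{N}(0,1)$ together with the factors $\sqrt{\binom{d_i}{\alpha}}$ is exactly what makes $\|P\|_W$ the Euclidean norm of a standard Gaussian vector \emph{and} what makes the law of $P$ invariant under orthogonal changes of the variables; both features are essential for the reduction.

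The first step is to establish a real Eckart--Young-type identity
\[
\tilde{\kappa}(P)\;=\;\frac{\|P\|_W}{\operatorname{dist}_W(P,\Sigma_\R)},
\]
where $\Sigma_\R\subset(\R[x_1,\ldots,x_n])^{n-1}$ is the set of homogeneous systems that either possess a degenerate real zero on $S^{n-1}$ or admit a real critical point whose Weyl-scaled critical value collapses simultaneously with the Jacobian; this second branch is what makes the real case genuinely harder than the complex setting of Shub--Smale. With the identity in hand, the event $\{\tilde{\kappa}(P)\geq tM'\}$ is precisely the event that $P$ lies in a $(\|P\|_W/(tM'))$-tube around $\Sigma_\R$ in the Weyl--Bombieri metric. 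A coarea / incidence-variety argument then converts this into an integral over pairs $(P,x)\in(\text{coefficient space})\times S^{n-1}$ of a single local ``bad'' event, and orthogonal invariance of the law lets me freeze $x=e_n$, paying only a $\mathrm{vol}(S^{n-1})$ factor that is cancelled by the Jacobian of the coarea formula.

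At the base point $x=e_n$ the pair $(P(e_n),\,DP(e_n)|_{T_{e_n}S^{n-1}}\Delta_{n-1}^{-1})$ is jointly a Gaussian block with explicit covariance, and $\tilde{\kappa}(P,e_n)\geq s$ translates to a lower bound on a ``smallest-singular-value / residual'' quantity built from this block. I would control this by combining a Gaussian small-ball estimate for the smallest singular value of a structured Gaussian matrix with a tail bound for $\|P\|_W$, producing a pointwise estimate of the order $\sqrt{1+\log s}/s$. The combinatorial factor $M'=1+8d^2\sqrt{(n-1)^5 N\prod d_i}$ is exactly the one produced by combining a B\'ezout-type bound on the number of real critical points on $S^{n-1}$ (yielding $\sqrt{\prod d_i}$ and the polynomial-in-$n$ factors) with dimensional constants from the Jacobian and tangent-space normalisation $\Delta_{n-1}$. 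The expectation bound then falls out of
\[
\mathbb{E}\log\tilde{\kappa}(P)\;\leq\;\log M'+\int_1^{\infty}\mathrm{Prob}\bigl(\tilde{\kappa}(P)\geq tM'\bigr)\,\frac{dt}{t},
\]
and an elementary integration of $\sqrt{1+\log(tM')}/t$, optimised by splitting the integral at $t=e^{\sqrt{\log M'}}$, produces the stated $\log M'+\sqrt{\log M'}+1/\sqrt{\log M'}$.

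The main obstacle is the passage from a pointwise tail bound (at a single $x$) to a uniform bound on $\sup_{x\in S^{n-1}}\tilde{\kappa}(P,x)$. This is precisely where the combination of the Eckart--Young identity and orthogonal invariance is indispensable: without invariance one would pay a covering-number price on $S^{n-1}$, and without the distance formulation the bad event does not localise to a single tube. Correctly handling the two branches of $\Sigma_\R$ (the classical singular-zero branch, and the new real-critical-value branch absent from the Shub--Smale picture) and extracting the constant with the exact shape $1+8d^2\sqrt{(n-1)^5 N\prod d_i}$ is the most delicate piece of bookkeeping in the calculation.
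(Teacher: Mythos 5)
This theorem is not proved in the paper at all: it is quoted verbatim from \cite[Thm.\ 1.2]{M3} as background, so there is no internal proof to compare your attempt against. The only ingredient of your plan that does appear in the paper is your first step, the identity $\tilde{\kappa}(P)=\|P\|_W/\mathrm{Dist}(P,\Sigma_{\R})$, which is Theorem \ref{thm:dist} here and is itself imported from \cite[Prop.\ 3.1]{M2}. Everything after that would have to be supplied from the Cucker--Krick--Malajovich--Wschebor papers themselves.

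Judged on its own, your proposal is a reasonable road map of the known strategy (distance formula, orthogonal invariance of the Weyl--Bombieri Gaussian ensemble, a coarea/Rice-type reduction to a single base point, Gaussian smallest-singular-value and norm estimates), but it is a plan, not a proof, and the gaps are exactly where the content of the theorem lives. Concretely: (i) the claim that freezing $x=e_n$ costs ``only a $\mathrm{vol}(S^{n-1})$ factor that is cancelled by the Jacobian of the coarea formula'' is the entire technical difficulty --- one must actually compute the normal Jacobian of the incidence variety (or carry out the Kac--Rice computation for the associated random field), and in the real setting the two branches of $\Sigma_{\R}$ (degenerate zero versus small critical value) must be handled separately; you name this difficulty but do not resolve it. (ii) The shape of the tail, $\sqrt{1+\log(tM')}/t$, is asserted to come from ``a Gaussian small-ball estimate \ldots producing a pointwise estimate of the order $\sqrt{1+\log s}/s$'' with no indication of where the logarithmic factor originates; a generic codimension-one tube bound gives $O(1/t)$, and the extra $\sqrt{\log}$ is a specific artifact of the argument in \cite{M3} that you would need to derive. (iii) The constant $M'=1+8d^2\sqrt{(n-1)^5N\prod d_i}$ is simply declared to be ``exactly the one produced by'' the computation; since the theorem is an explicit quantitative statement, reproducing this constant (including the provenance of the B\'ezout factor $\sqrt{\prod d_i}$ and the powers of $n-1$) is not bookkeeping one can wave at --- it is the statement. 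The final integration for Assertion (2) is the only step you could carry out as written.
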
 

The expanded class of distributions we allow for the coefficients of 
$P$ satisfy the following more flexible hypotheses:  
\begin{nota} 
For any $d_1,\ldots,d_m\!\in\!\N$ and $i\!\in\!\{1,\ldots,m\}$, let 
$d\!:=\!\max_i d_i$, $N_i:=\binom{n+d_i-1}{d_i}$, and  
assume $C_i\!=\!(c_{i,\alpha})_{\alpha_1+\cdots+\alpha_n=d_i}$ are 
independent random vectors in $\R^{N_i}$ with probability distributions  
satisfying:\\
\mbox{}\hspace{1cm}1.\ (Centering) For any  
$\theta \in S^{N_{i}-1}$ we have 
$ \mathbb{E}\langle C_i, \theta \rangle = 0$.\\ 
\mbox{}\hspace{1cm}2.\ (Sub-Gaussian) There is a $K>0$ such that for every 
$\theta \in S^{N_{i}-1}$ we have \\ 
\mbox{}\hspace{4.5cm}$\mathrm{Prob} 
\left( \abs{ \langle C_{i}, \theta 
\rangle } \geq t \right) \leq 2 e^{-t^2/K^2}$ for all $ t>0$.\\
\mbox{}\hspace{1cm}3.\ (Small Ball) There is a $c_0>0$ such that for every 
vector $a \in \mathbb{R}^{N_{i}}$ we have\\ 
\mbox{}\hspace{3.8cm}$\mathrm{Prob}\left( \abs{ \langle a, C_i \rangle } \leq 
\eps \norm{a}_2 \right) \leq c_0 \eps$ for all $ \varepsilon>0$. \dia 
\end{nota} 

\noindent 
By the vectors $C_i$ being independent we simply mean that 
the probability density function for the longer vector 
$C_1\times \cdots \times C_m$ can be expressed as a product of the 
form $\prod^m_{i=1}f_i(\ldots,c_{i,\alpha},\ldots)$. This is a much weaker 
assumption than having {\em all} the 
$c_{i,\alpha}$ be independent, as is usually done in the literature 
on random polynomial systems. 



A simple example of $(C_1,\ldots,C_m)$ satisfying the $3$ assumptions 
above would be to simply use an independent mean $0$ Gaussian for each 
$c_{i,\alpha}$, with each variance arbitrary. This already  
generalizes the setting of \cite{SS,M1,M2,M3} where the variances were 
specially chosen functions of $(d_1,\ldots,d_{n-1})$ and $\alpha$. 

Another example of a collection of random vectors satisfying the 
$3$ assumptions above can be obtained by letting $p\!>\!2$ and letting 
$C_i$ have the uniform distribution on\linebreak 
$B_{p}^{N_{i}}:=\left\{ x \in \R^{N_{i}} \; | \; 
\sum_{j=1}^{N_{i}} x_{j}^{p} \leq 1 \right\}$ for all $i$:  In this case the 
Sub-Gaussian assumption follows from \cite[Sec.\ 6]{BK} and the Small Ball 
Assumption is a direct consequence of the fact that $B_{p}^{N_{i}}$ 
satisfies Bourgain's Hyperplane Conjecture (see, e.g., \cite{kp}). 
Yet another important example (easier to verify) is to let the $C_i$ 
have the uniform distribution on $\ell_2$ unit-spheres of varying dimension.  

A simplified summary of  our main results (Theorems \ref{gcondition} and 
\ref{expectation} from Section \ref{sec:main1}), in the special case of square 
dense systems, is the following:  
\begin{cor} \label{cor:main1} There is an absolute constant $A\!>\!0$ 
with the following property.  Let $P:=(p_1,\ldots,p_{n-1})$ be a random system 
of homogenous $n$-variate polynomials where\linebreak  
$p_i(x):=\sum\limits_{\alpha_1+\cdots+\alpha_n=d_i} 
\sqrt{\binom{d_i}{\alpha}} c_{i,\alpha} x^{\alpha}$ and $C_i\!=\!(c_{i,\alpha})_{\alpha_1+\cdots+\alpha_n=d_i}$ are independent random vectors satisfying the 
Centering, Sub-Gaussian and Small Ball assumptions, with underlying constants 
$c_0$ and $K$. Then, for $n \geq 3$, $ d := \max_i d_i$, $d \geq 2$, 
$N:=\sum^{n-1}_{i=1}\binom{n+d_i-1}{d_i}$, and 
$M:=  A \sqrt{N} (K c_0)^{2(n-1)} (3d^2\log(ed))^{2n-3} \sqrt{n}$   
the following bounds hold:\\ 
\mbox{}\hspace{1in}1. $\mathrm{Prob}(\tilde{\kappa}(P) \geq t M) \leq 
\left\{ \begin{matrix}\mbox{} \ \ \ \ \ \ \ \ \ \ \ 3 t^{-\frac{1}{2}}\ \ \ \ \ \ \ 
\ \ \ \ \ \ \ ; \text{ if } 1 \leq t \leq (ed)^{2(n-1)}\\
3t^{-\frac{1}{2}} \left( \frac{t}{(ed)^{2(n-1)}} 
\right)^{\frac{1}{4\log(ed)}} ; 
\text{ if } t \geq (ed)^{2(n-1)}\end{matrix}\right. $\\ 
\mbox{}\hspace{1in}2. $\mathbb{E}(\log\tilde{\kappa}(P)) \leq 1+\log M$. 
\end{cor}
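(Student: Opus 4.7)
The plan is to derive the corollary as a direct specialization of Theorems \ref{gcondition} and \ref{expectation} to the square dense case $m = n-1$ with $N_i = \binom{n+d_i-1}{d_i}$. The general theorems take the parameters $N_i, d_i, K, c_0, n, m$ as input and output probabilistic bounds on $\tilde{\kappa}(P)$; for the corollary I simply need to track how $M$ combines these ingredients upon setting $m = n-1$ and recognising $N = \sum_i N_i$.

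For Part 1, I would feed the data into Theorem \ref{gcondition} and re-express the resulting threshold in the homogeneous form $tM$. The exponent $2(n-1)$ on $(K c_0)$ is the expected signature of an $(n-1)$-fold anti-concentration argument, one Small Ball factor per polynomial $p_i$; the factor $(3d^2\log(ed))^{2n-3}$ is the expected output of a covering/net argument over $S^{n-1}$ combined with a Lipschitz estimate for $P$ and $DP$ on the sphere; and the $\sqrt{N}$ comes from sub-Gaussian concentration of $\|P\|_W$. These are exactly the ingredients that should appear in Theorem \ref{gcondition} in an unspecialized form, so the work is to collapse them to the stated product. The two-regime shape of the tail corresponds to a natural split: for $t\le(ed)^{2(n-1)}$ the anti-concentration step dominates and yields the clean $3t^{-1/2}$ bound; for larger $t$ the sub-Gaussian upper tail of $\|P\|_W$ begins to contribute, which accounts for the additional saving $(t/(ed)^{2(n-1)})^{1/(4\log(ed))}$.

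For Part 2 the cleanest route is to invoke Theorem \ref{expectation} directly, but one can also layer-cake integrate the tail bound from Part 1:
\begin{equation*}
\mathbb{E}\!\left[\log\tilde{\kappa}(P)\right] \le \log M + \int_0^\infty \mathrm{Prob}\!\left(\tilde{\kappa}(P) > M e^s\right)\,ds,
\end{equation*}
splitting the integral at $s_0 := 2(n-1)\log(ed)$. On $[0,s_0]$ the integrand is $\min(1,3e^{-s/2})$, which integrates to an absolute constant. On $[s_0,\infty)$ the sharper decay $3e^{-s/2}\cdot(e^s/(ed)^{2(n-1)})^{1/(4\log(ed))}$ still integrates to an absolute constant because $\tfrac{1}{2}-\tfrac{1}{4\log(ed)}$ stays bounded away from $0$ for $d\ge 2$. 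Choosing the absolute constant $A$ sufficiently large absorbs the resulting numerical factor into the bound $1+\log M$.

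The main obstacle I anticipate is not in the passage from the main theorems to the corollary, but rather in verifying that the constants in Theorems \ref{gcondition} and \ref{expectation} really collapse into a single absolute $A$ under the substitutions $m=n-1$ and $N_i=\binom{n+d_i-1}{d_i}$. In particular, one must check that no hidden $d$- or $n$-dependent factor escapes the explicit product $\sqrt{N}\,(K c_0)^{2(n-1)}(3d^2\log(ed))^{2n-3}\sqrt{n}$ quoted in $M$, and that the hypothesis $d\ge 2$ is just what is needed to keep the exponent $\tfrac{1}{4\log(ed)}$ strictly below $\tfrac{1}{2}$ so that the second-regime tail is actually summable in the integration above.
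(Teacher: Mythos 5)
Your overall strategy --- specialize Theorems \ref{gcondition} and \ref{expectation} to $m=n-1$ and absorb constants into $A$ --- is indeed the paper's route (it passes through the intermediate Corollary \ref{square}), and your layer-cake integration for Part 2 is a workable alternative to simply quoting $\mathbb{E}(\log\tilde{\kappa}(P))\le 1+\log M$ from Theorem \ref{expectation}. But there is a genuine gap in Part 1: Theorem \ref{gcondition} does \emph{not} output the two-regime, power-of-$t$ form stated in the corollary. For $m=n-1$ it gives three regimes, and beyond the first the bound carries the factor $\bigl(\tfrac{\log t}{2(n-1)\log(ed)}\bigr)^{(n-\frac{3}{2})/2}$ --- a \emph{polylogarithmic} correction raised to a power growing with $n$. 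The entire substance of the corollary's proof is the conversion of this factor into $\bigl(t/(ed)^{2(n-1)}\bigr)^{1/(4\log(ed))}$: writing $t=x(ed)^{2(n-1)}$ with $x\ge 1$, one has $\bigl(1+\tfrac{\log x}{2(n-1)\log(ed)}\bigr)^{\frac{n}{2}-\frac{3}{4}}\le e^{\frac{\log x}{4\log(ed)}}=x^{\frac{1}{4\log(ed)}}$ by $(1+y)^a\le e^{ay}$ together with $\frac{n/2-3/4}{2(n-1)}\le\frac{1}{4}$. You never perform, or even identify, this step; your proposal assumes the main theorem already delivers the stated shape.

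Your reading of the second regime is also backwards: you call the factor $\bigl(t/(ed)^{2(n-1)}\bigr)^{1/(4\log(ed))}$ an ``additional saving'' coming from the sub-Gaussian upper tail of $\|P\|_W$, but for $t\ge(ed)^{2(n-1)}$ this factor is $\ge 1$, so the bound there is \emph{weaker} than $3t^{-1/2}$ (this is precisely the $O(1/t^{0.3523})$ vs.\ $O(\sqrt{\log t}/t)$ comparison made in the introduction). It is an upper bound for the polylogarithmic loss incurred by enlarging the parameters $s$ and $u$ in the proof of Theorem \ref{gcondition}, not a gain. Finally, the role of $d\ge 2$ and $n\ge 3$ is not to keep $\frac{1}{4\log(ed)}<\frac{1}{2}$ (that already holds for $d\ge 1$); it is to guarantee $N\ge m\log(ed)$ so that Case (1) of Theorem \ref{gcondition} applies, and to permit dropping the factor $\max\{d^6,n/d^2\}$ from $M$, as is done in Corollary \ref{square}.
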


\noindent 
Corollary \ref{cor:main1} is proved in Section \ref{sec:main1}.  Theorems 
\ref{gcondition} and \ref{expectation} in Section \ref{sec:main1} below in 
fact state much stronger estimates than our simplified summary above. 

Note that, for fixed $d$ and $n$, the bound from Assertion (1) of 
Corollary \ref{cor:main1} shows a somewhat slower rate of decay for the 
probability of a large condition number than the older bound from Assertion 
(1) of Theorem \ref{CKMW}: $O(1/t^{0.3523})$ vs.\ $O(\sqrt{\log t}/t)$. 
However, the older $O(\sqrt{\log t}/t)$ bound was restricted to a special 
family of Gaussian distributions (satisfying invariance with 
respect to a natural $O(n)$-action on the root space $\Pro^{n-1}_\R$) 
and assumes $m\!=\!n-1$. Our techniques come 
from geometric functional analysis, work for a broader family of 
distributions, and we make no group-invariance assumptions.  

Furthermore, our techniques allow condition number bounds in a new setting: 
over-determined systems, i.e., $m \times n$ 
systems with $m\!>\!n-1$. See the next section for the definition 
of a condition number enabling 
$m\!>n-1$, and the statements of Theorems \ref{gcondition} and 
\ref{expectation} for our most general condition number bounds. 
The over-determined case occurs in many important applications involving 
large data, where one may make multiple redundant measurements of some physical 
phenomenon, e.g., image reconstruction from multiple projections. 
There appear to have been no probabilistic condition number estimates for the 
case $m\!>\!n-1$ until now. In particular, for $m$ proportional to $n$, 
we will see at the end of this paper how our condition number estimates are 
close to optimal.  

To the best of our knowledge, the only 
other result toward estimating condition numbers of 
non-Gaussian random polynomial systems is due to Nguyen \cite{Ng}. However,
in \cite{Ng} the degrees of the polynomials are assumed to be bounded by a 
small fraction of the number of variables, $m\!=\!n-1$, and the quantity
analyzed in \cite{Ng} is not the condition number considered in
\cite{SS} or \cite{M1,M2,M3}.

The precise asymptotics of the decay rate for the 
probability of having a large condition number remain unknown, 
even in the restricted Gaussian case considered by Cucker, Malajovich, Krick, 
and Wschebor. So we also prove {\em lower} bounds 
for the condition number of a random polynomial system.
To establish these bounds, we will need one more assumption on the randomness.

\begin{nota} 
For any $d_1,\ldots,d_m\!\in\!\N$ and $i\!\in\!\{1,\ldots,m\}$, let
$d\!:=\!\max_i d_i$, $N_i:=\binom{n+d_i-1}{d_i}$, and
assume $C_i\!=\!(c_{i,\alpha})_{\alpha_1+\cdots+\alpha_n=d_i}$ is an
independent random vector in $\R^{N_i}$ with probability distribution
satisfying:\\
4. (Euclidean Small Ball) There is a constant $\tilde{c}_0>0$ such that for 
every $\eps\!>\!0$ we have\\  
\mbox{}\hspace{2.6cm} 
$\mathrm{Prob}\left( \| C_i\|_{2} \leq \varepsilon \sqrt{N_i} \right) 
\leq ( \tilde{c_0} \varepsilon)^{N_i}$. \dia 
\end{nota} 
\begin{rem} 
If the vectors $C_i$ have independent coordinates satisfying the Centering 
and Small Ball Assumptions, then Lemma
\ref{smallBallTensorize} from Section \ref{sub:smallball} implies  
that the Euclidean Small Ball Assumption holds as well. Moreover, if the $C_i$
are each uniformly distributed on a convex body $X$ and satisfy our Centering 
and Sub-Gaussian assumptions, then a result of Jean Bourgain \cite{Bou} (see 
also \cite{DP} or \cite{KM} for alternative proofs) implies that both the Small 
Ball and Euclidean Small Ball Assumptions hold, and with $\tilde{c}_0$ depending only on 
the Sub-Gaussian constant $K$ (not the convex body $X$). \dia 
\end{rem} 
\begin{cor} 
\label{getlog}
Suppose $n,d\geq 3$, $m=n-1$, and $d_j=d$ for all $j\!\in\!\{1,\ldots, 
n-1\}$. Also let $P:= (p_1, \ldots , p_m)$ be a random polynomial system 
satisfying our Centering, Sub-Gaussian, Small Ball, and Euclidean Small Ball assumptions, 
with respective underlying constants $K$ and $\tilde{c_0}$. 
Then there are constants $A_2\!\geq\!A_1\!>\!0$ such that  
$$ A_1 (n\log(d) + d\log(n))  \leq \mathbb{E}(\log \tilde{\kappa} (P)) \leq A_2 
(n\log(d)+ d\log(n)). \ \ \ \text{\qed}$$ 
\end{cor}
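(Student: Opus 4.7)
\emph{Upper bound.} The upper bound is immediate from Corollary~\ref{cor:main1}(2). Specializing $m=n-1$ and $d_j=d$, we have $N=(n-1)\binom{n+d-1}{d}$. Since $\binom{n+d-1}{d}\le 2^{n+d-1}\le n^d d^n$ for $n,d\ge 3$, this gives $\log N \le n\log d + d\log n + O(1)$. The remaining factors in $\log M$ contribute $(2n-3)\log(3d^2\log(ed)) = O(n\log d)$ (since $\log\log(ed)=O(\log d)$) and $2(n-1)\log(Kc_0)=O(n)$, so $\log M = O(n\log d + d\log n)$, yielding $\mathbb{E}(\log\tilde{\kappa}(P))\le 1+\log M \le A_2(n\log d + d\log n)$.

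\emph{Lower bound: setup.} My plan for the lower bound is to exploit the reciprocal-distance interpretation of $\tilde{\kappa}$. Let $\Sigma\subset\R^N$ be the real algebraic ``ill-conditioned locus'' consisting of those $Q$ for which there exists $x\in S^{n-1}$ with $Q(x)=\bO$ and $DQ(x)|_{T_xS^{n-1}}$ rank-deficient. A condition-number theorem in the spirit of Theorem~\ref{thm:dist} supplies $\tilde{\kappa}(P)\ge c\,\|P\|_W/\mathrm{dist}_W(P,\Sigma)$ for some absolute $c>0$, so
\begin{equation*}
  \mathbb{E}\log\tilde{\kappa}(P) \;\ge\; \mathbb{E}\log\|P\|_W \;-\; \mathbb{E}\log\mathrm{dist}_W(P,\Sigma) \;-\; O(1).
\end{equation*}
The Euclidean Small Ball Assumption forbids $\|P\|_W=\left(\sum_i\|C_i\|_2^2\right)^{1/2}$ from being much smaller than $\sqrt{N}$ except on an exponentially small event, so $\mathbb{E}\log\|P\|_W \ge \tfrac12\log N - O(1)$.

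\emph{Lower bound: main step and main obstacle.} The decisive estimate is to show $\mathbb{E}\log\mathrm{dist}_W(P,\Sigma) \le -A_1(n\log d + d\log n) + O(1)$. I would obtain this from a Weyl-type tube formula adapted to sub-Gaussian measures: the probability mass of the tube $\{\mathrm{dist}_W(P,\Sigma) \le \varepsilon \|P\|_W\}$ is bounded below by $c'\varepsilon D$ for an appropriate range of $\varepsilon$, where $D = \deg\Sigma$. Integrating over $\varepsilon$ yields $\mathbb{E}[-\log(\mathrm{dist}_W(P,\Sigma)/\|P\|_W)] \ge \log D - O(1)$, and the required bound $\log D \ge c_2(n\log d + d\log n)$ on the degree of this discriminant follows from classical intersection-theoretic computations on discriminants of polynomial systems. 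The hard part will be making the tube formula rigorous in the sub-Gaussian (non-Gaussian) setting: the Gaussian argument leans on rotational invariance and the explicit Gaussian density, neither of which is available here. The Small Ball and Euclidean Small Ball Assumptions are precisely what substitute for these, supplying the uniform anti-concentration needed for a sub-Gaussian analogue of Weyl's tube formula; tracking the resulting constants carefully is what will produce the final constants $A_1$ and $A_2$, which depend on $K$ and $\tilde c_0$.
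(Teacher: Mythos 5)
Your upper bound is correct and is exactly the paper's route: Corollary~\ref{cor:main1}(2) (i.e., Theorem~\ref{expectation}) gives $\mathbb{E}(\log\tilde{\kappa}(P))\le 1+\log M$, and $\log M=O(n\log d+d\log n)$ by the estimate on $\log N$ you give.

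The lower bound, however, contains a genuine gap, and it is not where you locate it. The paper does not go anywhere near the discriminant: Lemma~\ref{lower-bound} first proves the \emph{deterministic} inequality $\tilde{\kappa}(P)\ge \|P\|_W/(\sqrt{m+1}\,\|P\|_\infty)$ (a direct consequence of Kellogg's inequality, since $\cL(x,y)^2\le (m+1)\|P\|_\infty^2$), and then combines a high-probability lower bound $\|P\|_W\gtrsim\sqrt{N}$ (Euclidean Small Ball plus the tensorization Lemma~\ref{Aniso-p}) with the high-probability upper bound $\|P\|_\infty\lesssim K\sqrt{m}\log(ed)$ from Lemma~\ref{operatornorm}; integrating the resulting tail gives $\mathbb{E}\log\tilde{\kappa}(P)\gtrsim\log\bigl(\sqrt{N}/(Km\log(ed))\bigr)$. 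By contrast, the decisive step of your argument --- that the measure of the tube $\{\mathrm{dist}_W(P,\Sigma)\le\varepsilon\|P\|_W\}$ is bounded \emph{below} by $c'\varepsilon D$ with $D=\deg\Sigma$ --- is asserted, not proved, and the obstruction is not merely the passage from Gaussian to sub-Gaussian densities. Degree controls tube volumes around real algebraic varieties only from \emph{above} (via Crofton-type integral geometry); a real variety of enormous degree can have an empty or very low-dimensional real locus, so no lower bound on the measure of its $\varepsilon$-neighborhood can follow from $\deg\Sigma$ alone. One would need genuinely new quantitative information about the real geometry of $\Sigma_\R$ (e.g., a lower bound on its $(N-1)$-dimensional volume), which neither your sketch nor the paper supplies. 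Your degree heuristic also misfires quantitatively: for fixed $n$ and $d\to\infty$ the relevant discriminant degree grows only polynomially in $d$, so $\log D=O(\log d)$, nowhere near the target $d\log n$. Finally, note that even the paper's own route yields a lower bound of order $\log N$ (up to the subtraction of $\log(Km\log(ed))$), and $\log N$ is of strictly smaller order than $n\log d+d\log n$ in several regimes (e.g., $d$ fixed, $n\to\infty$); so you should treat the two-sided statement of Corollary~\ref{getlog} with some care rather than trying to force a proof of the stated lower bound.
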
  

\noindent 
Corollary \ref{getlog} follows immediately from a more general
estimate: Lemma \ref{lower-bound} from Section \ref{sub:smallball}.
It would certainly be more desirable to know bounds within a constant 
multiple of $\tilde{\kappa}(P)$ instead. We 
discuss more refined estimates of the latter kind in Section \ref{sub:lower}, 
after the proof of Lemma \ref{lower-bound}. 

As we close our introduction, we point out that one of the tools we developed 
to prove our main theorems may be of independent interest: Theorem 
\ref{Kellogsys} of the next section extends, to polynomial systems,  
an earlier estimate of Kellog \cite{kellog} on the norm of the derivative 
of a single multivariate polynomial. 

\section{Technical Background} 
\label{sec:back} 
We start by defining an inner product structure on spaces of polynomial 
systems. For $n$-variate degree $d$ homogenous polynomials 
$f(x):=\sum_{|\alpha|=d} b_{\alpha}x^{\alpha}, 
g(x):=\sum_{|\alpha|=d} c_{\alpha}x^{\alpha}\in\R[x_1,\ldots,x_n]$, 
their {\em Weyl-Bombieri inner product} is defined as 
$$ \langle f,g \rangle_{W} := \sum_{|\alpha|=d} \frac{b_{\alpha}c_{\alpha}}
{\binom{d}{\alpha}}. $$

It is known (see, e.g., \cite[Thm.\ 4.1]{kostlan}) 
that for any $U \in O(n)$ we have 
$$ \langle f \circ U , g \circ U \rangle_{W} = \langle f ,g \rangle_{W}. $$ 
Let $D:=(d_1,\ldots , d_m)$ and let $H_D$ denote the space of (real) $m\times 
n$ systems of homogenous $n$-variate polynomials with respective degrees  
$d_1, \ldots , d_m$. Then for $F:=(f_1,\ldots,f_m) \in H_{D}$ and 
$G:=(g_1, \ldots, g_m) \in H_{D}$ we define the Weyl-Bombieri inner product 
{\em for two polynomial systems} to be 
$ \langle F ,G \rangle_{W}: = \sum_{i=1}^m \langle f_i , g_i \rangle_{W}$. 
We also let $\|F\|_W\!:=\!\sqrt{\langle F,F\rangle}$. 

A geometric justification for the definition of the condition number 
$\tilde{\kappa}$ can then be derived as follows: First, for $x \in S^{n-1}$, we 
abuse notation slightly by also letting $DP(x)$ denote the $m\times n$ Jacobian 
matrix of $P$, evaluated at the point $x$. For $m=n-1$ we denote 
the set of polynomial systems with singularity at $x$ by \\ 
\mbox{}\hfill 
$\Sigma_{\R}(x):=\{ P \in H_{D} \; | \; x \; \text{is a multiple root of} \; P \} $\hfill \mbox{}\\ 
and we then define $\Sigma_{\R}$ (the {\em real part of the disciminant 
variety for $H_D$}) to be: \\ 
\mbox{} \hfill $\Sigma_{\R}:=\{ P \in H_{D} \; | P \; \text{has a multiple root in} \; S^{n-1} \}= \bigcup_{x \in S^{n-1}} \Sigma_{\R}(x)$.\hfill 
\mbox{}\\  
Using the Weyl-Bombieri inner-product to define the underlying distance, 
we point out the following important geometric characterization of 
$\tilde{\kappa}$: 
\begin{thm} \label{thm:dist} \cite[Prop.\ 3.1]{M2} When $m\!=\!n-1$ we have 
$\tilde{\kappa}(P) = \frac{\norm{P}_{W}}{\mathrm{Dist}(P,\Sigma_{\R})}$ 
for all $P\!\in\!H_D$. \qed 
\end{thm}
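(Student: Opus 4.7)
The plan is to establish the identity via a classical Eckart-Young / distance-to-ill-posedness argument, using the reproducing-kernel structure of $\langle \cdot, \cdot \rangle_W$ on $H_D$. First I would unpack the definition of $\tilde{\kappa}(P,x)$: since $\sigma_{\mathrm{max}}(A^{-1}) = 1/\sigma_{\mathrm{min}}(A)$, one obtains $\tilde{\mu}_{\mathrm{norm}}(P,x)^{-1} = \sigma_{\mathrm{min}}\!\left(\Delta_{n-1}^{-1} DP(x)|_{T_xS^{n-1}}\right)/\norm{P}_W$, whence
\[ \frac{\norm{P}_W^2}{\tilde{\kappa}(P,x)^2} \;=\; \sigma_{\mathrm{min}}\!\left(\Delta_{n-1}^{-1} DP(x)|_{T_xS^{n-1}}\right)^2 + \norm{P(x)}_2^2. \]
A supremum over $x \in S^{n-1}$ on the LHS corresponds to an infimum on the RHS, so the theorem reduces to the purely geometric identity
\[ \mathrm{Dist}(P,\Sigma_{\R})^2 \;=\; \inf_{x\in S^{n-1}}\!\left(\sigma_{\mathrm{min}}\!\left(\Delta_{n-1}^{-1} DP(x)|_{T_xS^{n-1}}\right)^2 + \norm{P(x)}_2^2\right). \qquad (\ast) \]

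Next I would exploit the reproducing-kernel property of $\langle \cdot, \cdot \rangle_W$: a direct multinomial expansion yields, for every homogeneous $f$ of degree $d$ and every $x \in \R^n$, the identities $f(x) = \langle f, (x\cdot y)^d\rangle_W$ and $Df(x)\cdot v = \langle f, d(v\cdot y)(x\cdot y)^{d-1}\rangle_W$. For $x\in S^{n-1}$ and unit $v \in T_xS^{n-1}$ one checks that these two kernels are $W$-orthogonal, since $\langle (x\cdot y)^d, d(v\cdot y)(x\cdot y)^{d-1}\rangle_W = d(v\cdot x) = 0$, with $W$-norms $1$ and $\sqrt{d}$ respectively. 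Now observe that $\Sigma_{\R}(x) = \bigcup_v \ker L_{x,v}$, where $v$ ranges over unit vectors in $T_xS^{n-1}$ and $L_{x,v}\colon H_D \to \R^m \times \R^m$ is the linear map $Q\mapsto (Q(x), DQ(x)v)$. Because each $p_i$ only contributes to the $i$-th coordinate pair of $L_{x,v}$, the Gram operator $L_{x,v}L_{x,v}^*$ is block-diagonal across $i = 1,\ldots,m$ with respect to $\langle \cdot,\cdot\rangle_W$; by the kernel norm/orthogonality computation, each $2\times 2$ block equals $\mathrm{diag}(1,d_i)$.

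Finally, I would invoke the standard least-squares identity $\mathrm{Dist}(P,\ker L)^2 = \langle (LL^*)^{-1}LP, LP\rangle$ for surjective $L$ to obtain
\[ \mathrm{Dist}(P,\ker L_{x,v})^2 \;=\; \norm{P(x)}_2^2 + \sum_{i=1}^m \frac{(Dp_i(x)\cdot v)^2}{d_i} \;=\; \norm{P(x)}_2^2 + \norm{\Delta_m^{-1}DP(x)v}_2^2. \]
Minimizing over unit $v\in T_xS^{n-1}$ (a minimum attained precisely because $m = n-1$ makes $\Delta_{n-1}^{-1}DP(x)|_{T_xS^{n-1}}$ square) produces $\mathrm{Dist}(P,\Sigma_{\R}(x))^2 = \norm{P(x)}_2^2 + \sigma_{\mathrm{min}}(\Delta_{n-1}^{-1}DP(x)|_{T_xS^{n-1}})^2$, and a final infimum over $x\in S^{n-1}$ establishes $(\ast)$. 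The main obstacle is the reproducing-kernel step: getting the multinomial bookkeeping right for both kernels and checking that $v \perp x$ produces the clean orthogonality and the $\mathrm{diag}(1,d_i)$ block structure. Once those identities are in place, the remainder is a routine Eckart-Young-type least-squares computation.
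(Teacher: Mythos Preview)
The paper does not actually prove this theorem: it is quoted from \cite{M2} (their Proposition~3.1) and closed with a \qed\ immediately, so there is no in-paper argument to compare against. Your proposal is correct and is, in essence, the standard route to ``condition number equals reciprocal distance to ill-posedness'' results: use the reproducing-kernel property of $\langle\cdot,\cdot\rangle_W$ to realize the point-evaluation and directional-derivative functionals as explicit elements of $H_D$, compute their Gram matrix (here $\mathrm{diag}(1,d_i)$ thanks to $v\perp x$), and read off $\mathrm{Dist}(P,\ker L_{x,v})^2$ via the least-squares identity. Minimizing over $v$ and then $x$ gives $(\ast)$. This is exactly the mechanism underlying \cite[Prop.~3.1]{M2} and, ultimately, the Eckart--Young theorem in the linear case.

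One small clarification: your parenthetical ``a minimum attained precisely because $m=n-1$ makes $\Delta_{n-1}^{-1}DP(x)|_{T_xS^{n-1}}$ square'' is slightly misplaced. The infimum of $\|\Delta_m^{-1}DP(x)v\|_2$ over unit $v\in T_xS^{n-1}$ is always attained by compactness of the sphere. The hypothesis $m=n-1$ is doing its real work earlier, in two places: first, so that $DP(x)|_{T_xS^{n-1}}$ is square and the identity $\sigma_{\max}(A^{-1}\Delta)=\sigma_{\min}(\Delta^{-1}A)^{-1}$ you invoke at the outset is available; second, so that ``$x$ is a multiple root of $P$'' (i.e.\ $P(x)=0$ and $DP(x)|_{T_xS^{n-1}}$ singular) is genuinely equivalent to the existence of a unit $v$ with $L_{x,v}P=0$, giving your description $\Sigma_{\R}(x)=\bigcup_v\ker L_{x,v}$.
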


We call a polynomial system $P\!=\!(p_1,\ldots,p_m)$ with $m\!=\!n-1$ 
(resp.\ $m\!\geq\!n$) {\em square} (resp.\ {\em over-determined}). 
Newton's method for over-determined systems was studied in \cite{DS1}. 
So now that we have a geometric characterization of the condition number for 
square systems it will be useful to also have one for over-determined systems. 
\begin{dfn} 
\label{dfn:newcond} 
Let $\sigma_\mathrm{min}(A)$ denote the smallest singular value of a matrix 
$A$. For any system of homogeneous polynomials 
$P\!\in\!(\R[x_1,\ldots,x_n])^m$ set\\  
\mbox{}\hfill $L(P,x) := \sqrt{\sigma_\mathrm{min}
\left(\Delta^{-1}_m DP(x)|_{T_x S^{n-1}}\right)^2 
+ \norm{P(x)}_2^2}$,\hfill\mbox{}\\    
We then define 
$\tilde{\kappa}(P,x)=\frac{\norm{P}_W}{L(P,x)}$ 
and 
$\tilde{\kappa}(P)=\sup\limits_{x\in S^{n-1}}\kappa(P,x)$. \dia  
\end{dfn}  

\noindent 
The quantity $\min\limits_{x \in S^{n-1}}L(P,x)$ thus plays 
the role of $\mathrm{Dist}(P,\Sigma_{\R})$ in the more general 
setting of $m\!\geq\!n-1$. We now recall an important observation from 
\cite[Sec.\ 2]{M2}:\linebreak 
Setting $D_x(P):=DP(x)|_{T_x S^{n-1}}$ we have 
$\sigma_{\mathrm{min}}(\Delta^{-1}_{n-1}D_x(P))= \sigma_\mathrm{max}
\left(D_x(P)^{-1}\Delta_{n-1}\right)^{-1}$, 
when $m\!=\!n-1$ and $D_x(P)$ is invertible. So by the 
definition of $\tilde{\mu}_\mathrm{norm}(P,x)$
 we have $$ L(P,x)= \sqrt{\sigma_\mathrm{max}\left(D_x(P)^{-1} 
\Delta_{n-1}\right)^{-2} + \norm{P(x)}_2^2}=
\sqrt{\|P\|_W^2 \tilde{\mu}_\mathrm{norm}(P,x)^{-2} + \norm{P(x)}_2^2} $$ 
\noindent and thus our more general definition agrees with the classical 
definition in the square case. 

Since the $W$-norm of a random polynomial system has strong concentration 
properties for a broad variety of distributions (see, e.g., 
\cite{V}), we will be interested in the behavior of $L(P,x)$. So let us 
define the related quantity 
$\cL(x,y):= \sqrt{\norm{\Delta^{-1}_m D^{(1)}P(x)(y)}_2^2+
\norm{P(x)}_2^2}$. It follows directly that 
$L(P,x)=\inf\limits_{\substack{y \bot x \\ y\in S^{n-1}}} \cL(x,y)$. 
  	
We now recall a classical result of O.\ D.\ Kellog. The theorem below is a 
summary of \cite[Thms.\ 4--6]{kellog}.  
\begin{thm} \cite{kellog} \label{kellog} 
Let $p\!\in\!\R[x_1,\ldots,x_n]$ have degree $d$ and set 
$\norm{p}_{\infty}:=\sup_{x \in S^{n-1}}\abs{p(x)}$ and 
$\norm{D^{(1)}p}_{\infty}:=\max_{x,u \in S^{n-1}} \abs{D^{(1)}p(x)(u)}$.   
Then: 
\begin{enumerate}
\item We have $\norm{D^{(1)}p}_{\infty} \leq  d^2 \norm{p}_{\infty}$ and, 
for any mutually orthogonal $x,y\!\in\!S^{n-1}$, we also have 
$\abs{D^{(1)}p(x)(y)}  \leq d \norm{p}_{\infty}$. 

\item If p is homogenous then we also have 
$\norm{D^{(1)}p}_{\infty} \leq  d \norm{p}_{\infty}$. \qed 
\end{enumerate}
\end{thm}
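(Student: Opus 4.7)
The plan is to reduce each bound to a classical one-variable inequality (Bernstein for trigonometric polynomials, Markov for algebraic polynomials) by restricting $p$ to suitably chosen one-parameter curves.

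First I would handle the tangential case of part (1), where $x,y\in S^{n-1}$ are orthogonal. Parametrize the great circle in $\mathrm{span}(x,y)$ by $\gamma(t):=\cos(t)x+\sin(t)y\in S^{n-1}$ and set $q(t):=p(\gamma(t))$. Each coordinate of $\gamma(t)$ is an affine combination of $\cos(t)$ and $\sin(t)$, so a monomial of total degree $\leq d$ in $p$ restricts along $\gamma$ to a polynomial of degree $\leq d$ in $(\cos t,\sin t)$, that is, a trigonometric polynomial of degree at most $d$; hence $q$ itself is trig of degree $\leq d$ with $\|q\|_\infty\leq\|p\|_\infty$. The classical Bernstein inequality then gives $|q'(0)|=|D^{(1)}p(x)(y)|\leq d\|p\|_\infty$, which is precisely the second assertion in part (1).

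Next, for part (2), with $p$ homogeneous of degree $d$, Euler's identity $x\cdot\nabla p(x)=d\,p(x)$ handles the radial direction: $D^{(1)}p(x)(x)=d\,p(x)$. For an arbitrary $u\in S^{n-1}$, write $u=ax+by$ with $a=\langle u,x\rangle$, $y\perp x$ a unit vector, and $a^2+b^2=1$, so that $D^{(1)}p(x)(u)=a\cdot d\,p(x)+b\cdot D^{(1)}p(x)(y)$. To recover the optimal constant $d$ rather than $\sqrt{2}\,d$, I would invoke the sharper Videnskii-type refinement of Bernstein, namely $q'(0)^2+d^2q(0)^2\leq d^2\|q\|_\infty^2$, applied once more to the same trig polynomial $q(t)=p(\cos(t)x+\sin(t)y)$. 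This gives $(D^{(1)}p(x)(y))^2\leq d^2(\|p\|_\infty^2-p(x)^2)$, after which a direct Cauchy--Schwarz estimate on $(a,b)$ versus $(d\,p(x),D^{(1)}p(x)(y))$ delivers $|D^{(1)}p(x)(u)|\leq d\|p\|_\infty$.

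The $d^2$ bound in part (1) for arbitrary $u\in S^{n-1}$ requires additionally controlling the radial derivative $D^{(1)}p(x)(x)$ for non-homogeneous $p$. I would reduce to two dimensions: fix any unit $y\perp x$ and restrict $P(r,s):=p(rx+sy)$ to the 2-plane $\mathrm{span}(x,y)$. Since each factor $rx_i+sy_i$ is linear in $(r,s)$, $P$ is a bivariate polynomial of degree $\leq d$, and $|P(\cos t,\sin t)|\leq\|p\|_\infty$ for all real $t$. Applying the planar Markov-type inequality for polynomials bounded on the unit circle --- whose sharp constant $d^2$ is witnessed by the Chebyshev polynomial $T_d(r)$ viewed as independent of $s$ --- yields $|\partial_r P(1,0)|=|D^{(1)}p(x)(x)|\leq d^2\|p\|_\infty$. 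Combining this with the tangential bound from the first paragraph through the decomposition $u=ax+by$ gives $|D^{(1)}p(x)(u)|\leq d^2\|p\|_\infty$. The main obstacle is exactly this last step: $\|p\|_\infty$ on $S^{n-1}$ gives no a priori control on $p$ in the interior of the unit ball, so one cannot directly invoke multivariate Markov on $B^n$; the 2-dimensional reduction, together with a Chebyshev extremality argument (equivalently, a harmonic/polynomial extension of $P$ from the circle to the disc), is what recovers the sharp constant $d^2$.
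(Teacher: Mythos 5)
The paper itself offers no proof of this statement: it is quoted verbatim as a summary of Theorems 4--6 of Kellogg's 1928 paper and closed with a \qed, so you are being compared against a citation rather than an argument. Your treatment of the tangential bound in (1) and of the homogeneous case (2) is correct and complete: restricting to the great circle $\gamma(t)=\cos(t)x+\sin(t)y$ gives a trigonometric polynomial of degree at most $d$, Bernstein yields $\abs{D^{(1)}p(x)(y)}\leq d\norm{p}_\infty$ for $y\perp x$, and in the homogeneous case Euler's identity for the radial direction combined with the Bernstein--Szeg\H{o} refinement $q'(0)^2+d^2q(0)^2\leq d^2\norm{q}_\infty^2$ and Cauchy--Schwarz on $(a,b)$ versus $\left(d\,p(x),D^{(1)}p(x)(y)\right)$ delivers the sharp constant $d$. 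This is essentially the classical van der Corput--Schaake argument and stands on its own.

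The first assertion of (1), however, has a genuine gap. After reducing to the plane $\mathrm{span}(x,y)$ you invoke ``the planar Markov-type inequality for polynomials bounded on the unit circle'' to bound the radial derivative by $d^2\norm{p}_\infty$ --- but that inequality \emph{is} the $n=2$ case of the very theorem you are proving, and the difficulty you correctly identify (no control of $p$ off the sphere, so univariate Markov along the radius is unavailable) is acknowledged rather than resolved; the ``Chebyshev extremality / harmonic extension'' step is never carried out. A natural attempt --- symmetrizing $q$ to its even part so that $\frac{1}{2}\left(q(\theta)+q(-\theta)\right)=G(\cos\theta)$ for an algebraic $G$ bounded on $[-1,1]$ and applying Markov at the endpoint --- produces $G'(1)=\partial_r P(1,0)-\partial_s^2P(1,0)$ rather than the radial derivative alone, so the missing step is not routine. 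Separately, even if one grants the radial bound $\abs{D^{(1)}p(x)(x)}\leq d^2\norm{p}_\infty$, combining it with the tangential bound $d\norm{p}_\infty$ through $u=ax+by$ gives only $\sqrt{d^4+d^2}\,\norm{p}_\infty=d\sqrt{d^2+1}\,\norm{p}_\infty$, not the stated $d^2\norm{p}_\infty$; recovering the constant $d^2$ requires a joint radial--tangential estimate (as Szeg\H{o} provides in your homogeneous case), not two separate ones. Since the paper uses this theorem as a black box, the cleanest repair is to do likewise and cite Kellogg (or Harris) for the $d^2$ bound, keeping your self-contained proofs of the other two assertions.
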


For any system of homogeneous polynomials 
$P:=(p_1,\ldots , p_m)\!\in\!(\R[x_1,\ldots,x_n])^m$ 
define $ \norm{P}_{\infty}:= \sup_{x \in S^{n-1}} 
\sqrt{\sum_{i=1}^m p_i(x)^2}$. 
Let $DP(x)(u)$ denote the image of the vector $u$ under the 
linear operator $DP(x)$, and set 
$$ \left\|D^{(1)}P\right\|_{\infty}:= \sup_{x,u\in S^{n-1}} \norm{D P(x)(u)}_2 
= \sup_{x,u \in S^{n-1}} \sqrt{\sum_{i=1}^m \langle \nabla p_i(x) , u 
\rangle^2}. $$ 
\begin{thm} \label{Kellogsys}
Let $P:=(p_1,\ldots,p_m)\!\in\!(\R[x_1,\ldots,x_n])^m$ be a polynomial system 
with $p_i$ homogeneous of degree $d_i$ for each $i$ and set 
$d\!:=\!\max_i d_i$. Then:  
\begin{enumerate}
\item We have $\norm{D^{(1)}P}_{\infty} \leq d^2 \norm{P}_{\infty}$ and, 
for any mutually orthogonal $x,y\!\in\!S^{n-1}$, we also have  
$\norm{D P(x)(y)}_2  \leq d \norm{P}_{\infty}$. 
\item If $\deg(p_i)=d$ for all $i\in\{1,\ldots,m \}$ then we also 
have $\norm{D^{(1)}P}_{\infty} \leq d \norm{P}_{\infty}$. 
\end{enumerate}
\end{thm}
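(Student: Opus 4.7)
The plan is to reduce the vector-valued bounds to Kellog's scalar bounds of Theorem \ref{kellog} via a duality argument on the output side. For each unit vector $\theta\in S^{m-1}$, introduce the scalar polynomial $q_{\theta}:=\sum_{i=1}^{m}\theta_{i}p_{i}$, so that $\langle P(x),\theta\rangle=q_{\theta}(x)$ and $\langle DP(x)(u),\theta\rangle=D^{(1)}q_{\theta}(x)(u)$. Two elementary observations then drive everything: by Cauchy--Schwarz,
\[
\|q_{\theta}\|_{\infty}=\sup_{x\in S^{n-1}}|\langle P(x),\theta\rangle|\leq\sup_{x\in S^{n-1}}\|P(x)\|_{2}=\|P\|_{\infty},
\]
and by the definition of the Euclidean norm via duality,
\[
\|DP(x)(u)\|_{2}=\sup_{\theta\in S^{m-1}}|D^{(1)}q_{\theta}(x)(u)|,\qquad \|D^{(1)}P\|_{\infty}=\sup_{\theta\in S^{m-1}}\|D^{(1)}q_{\theta}\|_{\infty}.
\]

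For part (2), under the hypothesis $\deg(p_{i})=d$ for every $i$, the scalar polynomial $q_{\theta}$ is homogeneous of degree $d$. Applying Kellog's homogeneous bound from Theorem \ref{kellog}(2) to $q_{\theta}$ yields $\|D^{(1)}q_{\theta}\|_{\infty}\leq d\|q_{\theta}\|_{\infty}\leq d\|P\|_{\infty}$, and taking the supremum over $\theta\in S^{m-1}$ finishes this case.

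For part (1), the degrees need no longer coincide, so $q_{\theta}$ is a (possibly inhomogeneous) polynomial of degree at most $d$. Applying Kellog's general bound from Theorem \ref{kellog}(1) to $q_{\theta}$ gives $\|D^{(1)}q_{\theta}\|_{\infty}\leq d^{2}\|q_{\theta}\|_{\infty}\leq d^{2}\|P\|_{\infty}$; taking the supremum over $\theta$ delivers the first inequality of (1). For mutually orthogonal $x,y\in S^{n-1}$, Kellog's orthogonal sharpening from Theorem \ref{kellog}(1) applied to $q_{\theta}$ gives $|D^{(1)}q_{\theta}(x)(y)|\leq d\|q_{\theta}\|_{\infty}\leq d\|P\|_{\infty}$, and the same supremum in $\theta$ yields $\|DP(x)(y)\|_{2}\leq d\|P\|_{\infty}$.

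There is really no hard step; the only conceptual point is recognizing that the correct linearization is \emph{in the output coordinates}, not in the input. Projecting $P$ onto directions $\theta\in S^{m-1}$ trades a vector-valued question for a one-parameter family of scalar polynomial questions to which Kellog's classical estimates apply verbatim, and Cauchy--Schwarz then lifts the bounds back to the vector-valued setting with no loss.
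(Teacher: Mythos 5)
Your proof is correct and is essentially the same argument as the paper's: both reduce to Kellog's scalar estimates by forming the linear combination $q_\theta=\sum_i\theta_i p_i$ and using Cauchy--Schwarz to bound $\norm{q_\theta}_\infty$ by $\norm{P}_\infty$; the paper merely instantiates your supremum over $\theta\in S^{m-1}$ at the explicit extremal direction $\alpha_i=\langle\nabla p_i(x_0),u_0\rangle/\norm{D^{(1)}P}_\infty$ rather than writing it as a duality identity.
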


\noindent 
{\em Proof.} Let $(x_0,u_0)$ be such that   
$\norm{D^{(1)}P}_{\infty}= \norm{D P(x_0)(u_0)}_2$ and let 
$\alpha:= (\alpha_1, \ldots , \alpha_m)$ where 
$\alpha_i:= \frac{\langle \nabla p_i(x_0) , u_0 \rangle}
{\norm{D^{(1)}P}_{\infty}}$. Note that $\norm{\alpha}_2=1$. Now define a 
polynomial $q\!\in\!\R[x_1,\ldots,x_n]$ of degree $d$ 
via $q(x):= \alpha_1 p_1(x) + \cdots + \alpha_m p_m(x)$ 
and observe that 

$$ \nabla q(x) = \left( 
\alpha_1 \frac{\partial p_1}{\partial x_1} + \cdots + \alpha_m 
\frac{\partial p_m}{\partial x_1}, 
\ldots , \alpha_1 \frac{\partial p_1}{\partial x_n}  
+ \cdots + \alpha_m \frac{\partial p_m}{\partial x_n} \right),$$
$$  \langle \nabla q , u \rangle = 
u_1 \left(\alpha_1 \frac{\partial p_1}{\partial x_1} + \cdots 
+ \alpha_m \frac{\partial p_m}{\partial x_1}
\right) + \cdots + u_n \left(\alpha_1 \frac{\partial p_1}{\partial x_n} 
+ \cdots + \alpha_m \frac{\partial p_m}{\partial x_n} \right), $$

\noindent 
and $ \langle \nabla q(x) , u \rangle  = \sum_{i=1}^m \alpha_i \langle \nabla p_i(x) , u \rangle$. In particular, for our chosen $x_0$ and $u_0$, we have  

$$ \langle \nabla q(x_0) , u_0 \rangle  = \sum_{i=1}^m \alpha_i \langle \nabla 
p_i(x_0) , u_0 \rangle =  \sum_{i=1}^m \frac{\langle \nabla p_i(x_0) , u_0 
\rangle^2}{\norm{D^{(1)}P}_{\infty}} = \left\|{D^{(1)}P}_{\infty}\right\|. $$

Using the first part of Kellog's Theorem we have

$$ \norm{D^{(1)}P}_{\infty} \leq \sup_{x,u \in S^{n-1}} \abs{\langle \nabla q(x) , u \rangle} \leq d^2 \norm{q}_{\infty}. $$

\noindent Now we observe by the Cauchy-Schwarz Inequality that 
$$ \norm{q}_{\infty}= \sup_{x \in S^{n-1}} \left|\sum_{i=1}^m \alpha_i 
p_i(x)\right| \leq \sup_{x \in S^{n-1}} \sqrt{\sum_{i=1}^m p_i(x)^2}.$$  

\noindent So we conclude that  
$ \norm{D^{(1)}P}_{\infty} \leq d^2 \norm{q}_{\infty} \leq 
d^2 \sup_{x \in S^{n-1}} \sqrt{\sum_{i=1}^m p_i(x)^2}= 
d^2 \norm{P}_{\infty}$. 
We also note that when $\deg(p_i)=d$ for all $i$, the 
polynomial $q$ is homogenous of degree $d$. So for this special case, the 
second part of Kellog's Theorem directly implies $\norm{D^{(1)}P}_{\infty} \leq 
d\norm{P}_{\infty}$. 

For the proof of the first part of Assertion (1) we define 
$\alpha_i=\frac{\langle \nabla p_i(x) , y \rangle}{\norm{D P(x)(y)}_2}$ 
and\linebreak  
$q(x)=\alpha_1 p_1 + \cdots + \alpha_n p_n$. Then 
$\langle \nabla q(x) , y \rangle= \sum_{i} \alpha_i \langle \nabla p_i(x) ,y 
\rangle = \norm{D P(x)(y)}_2$. 

\noindent By applying Kellog's Theorem on the orthogonal direction $y$ we 
then obtain  

$$ \norm{D P(x)(y)}_2 = \langle \nabla q(x) , y \rangle \leq d \norm{q}_{\infty} \leq d \norm{P}_{\infty}. \ \ \ \ \ \ \ \  \text{\qed} $$ 

\medskip
Using our extension of Kellog's Theorem to polynomial systems, we  
develop useful estimates for $\norm{P}_{\infty}$ and 
$\norm{D^{(i)}P}_{\infty}$. In what follows, we call a subset 
$\cN$ of a metric space $X$ a {\em $\delta$-net on $X$} if and only if 
the every point of $X$ is within distance $\delta$ of some point of 
$\cN$. A basic fact we'll use repeatedly is that, for any 
$\delta>0$ and {\em compact} $X$, one can always find a finite 
$\delta$-net for $X$. 
\begin{lem} \label{net-norm}
Let $P:=(p_1,\ldots,p_m)\in(\C[x_1,\ldots,x_n])^m$ be a system of homogenous 
polynomials, $\cN$ a $\delta$-net on $S^{n-1}$, and set $d:=\max_i d_i$. Let 
$\max_{\cN}(P):=\sup_{y \in \cN}  \norm{P(y)}_2$. Similarly let us 
define $\max_{\cN^{k+1}}(D^{(k)}P):=\sup_{x,u_1,\ldots,u_k \in \cN} 
\norm{D^{(k)} P(x) (u_1, \ldots, u_k)}_2$, and set \linebreak   
$\left\|D^{(k)} P\right\|_{\infty}:=\sup_{x,u_1,\ldots,u_k \in S^{n-1}} 
\left\|D^{(k)} P(x) (u_1,\ldots, u_k)\right\|_2$. Then: 
\begin{enumerate}
\item $\norm{P}_{\infty} \leq \frac{\max_{\cN}(P)}
{1-\delta d^2 } \text{ and } \norm{D^{(k)} P}_{\infty} \leq 
\frac{\max_{\cN^{k+1}}(D^{(k)}P) }{1-\delta d^2 \sqrt{k+1}}$. 
\item If $\deg(p_i)=d$ for each $ i \in \{1,\ldots,m \}$ then we have 
$$ \norm{P}_{\infty} \leq \frac{\max_{\cN}(P)}{1-\delta d} 
\text{ and } 
\norm{D^{(k)} P}_{\infty} \leq \frac{\max_{\cN^{k+1}}(D^{(k)}P)}
{1-\delta d \sqrt{k+1}}. $$
\end{enumerate}

\end{lem}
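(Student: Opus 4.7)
My plan is a standard net argument combined with the Lipschitz-type bounds provided by Theorem \ref{Kellogsys}. For the first bound in Assertion (1), by compactness pick $x_{*}\in S^{n-1}$ attaining $\norm{P}_{\infty}=\norm{P(x_{*})}_{2}$, and choose $y_{*}\in\cN$ with $\norm{x_{*}-y_{*}}_{2}\leq \delta$; the triangle inequality reduces matters to estimating $\norm{P(x_{*})-P(y_{*})}_{2}$. I apply the mean value theorem componentwise along the straight segment from $y_{*}$ to $x_{*}$, which lies in the closed unit ball. Here it is crucial that each partial derivative $\partial_{j} p_{i}$ is itself homogeneous of degree $d_{i}-1$, so for any nonzero $z$ in that ball and any $u\in S^{n-1}$ one has $\norm{DP(z)(u)}_{2}\leq \norm{DP(z/\norm{z})(u)}_{2}\leq \norm{D^{(1)}P}_{\infty}$. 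Thus $\norm{P(x_{*})-P(y_{*})}_{2}\leq \delta\norm{D^{(1)}P}_{\infty}$, and invoking Theorem \ref{Kellogsys}(1), i.e.\ $\norm{D^{(1)}P}_{\infty}\leq d^{2}\norm{P}_{\infty}$, yields the desired inequality after solving for $\norm{P}_{\infty}$.

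For $\norm{D^{(k)}P}_{\infty}$, I view $G(x,u_{1},\ldots,u_{k}):= D^{(k)}P(x)(u_{1},\ldots,u_{k})$ as a function on $(S^{n-1})^{k+1}$, pick a tuple $(x_{*},u_{*,1},\ldots,u_{*,k})$ achieving the sup, and approximate each coordinate by a nearest net point from $\cN$ to obtain $(y_{*},v_{*,1},\ldots,v_{*,k})\in\cN^{k+1}$. I then estimate the discrepancy by swapping one slot at a time. Each swap $u_{*,j}\to v_{*,j}$ uses only the multilinearity of $D^{(k)}P(y_{*})$ in its $k$ tangent arguments, producing a contribution bounded by $\norm{D^{(k)}P}_{\infty}\norm{u_{*,j}-v_{*,j}}_{2}$. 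The swap $x_{*}\to y_{*}$ reuses the mean-value argument of the previous paragraph, now applied to the polynomial system $D^{(k)}P(\cdot)(v_{*,1},\ldots,v_{*,k})$ (whose $i$-th component has degree $d_{i}-k$), contributing at most $\norm{D^{(k+1)}P}_{\infty}\norm{x_{*}-y_{*}}_{2}$. A further application of Theorem \ref{Kellogsys}(1) to that same polynomial system yields $\norm{D^{(k+1)}P}_{\infty}\leq d^{2}\norm{D^{(k)}P}_{\infty}$. Collecting contributions and applying Cauchy-Schwarz to the weighted sum with weight vector $(d^{2},1,\ldots,1)\in\R^{k+1}$ (together with the elementary inequality $\sqrt{d^{4}+k}\leq d^{2}\sqrt{k+1}$ valid for $d\geq 1$) produces the factor $\delta d^{2}\sqrt{k+1}$ in the denominator.

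Assertion (2) follows by the same argument, replacing every use of Theorem \ref{Kellogsys}(1) by Theorem \ref{Kellogsys}(2): when all $p_{i}$ share a common degree $d$, each polynomial system $D^{(k)}P(\cdot)(v_{*,1},\ldots,v_{*,k})$ still has all its components of the same degree $d-k$, so the sharper bound $d$ (instead of $d^{2}$) applies throughout. The main technical obstacle is the $k$-th derivative case, where the $k+1$ slots of $G$ play asymmetric roles: the base-point slot invokes a derivative bound (and hence a Kellog factor), whereas the $k$ tangent slots use only multilinearity. Combining these asymmetric contributions cleanly via Cauchy-Schwarz, so that the uniform $d^{2}\sqrt{k+1}$ (respectively $d\sqrt{k+1}$) factor emerges in the denominator, is the most delicate step of the argument.
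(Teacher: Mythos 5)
Your bound on $\norm{P}_{\infty}$ is correct and is essentially the paper's own argument (maximizer, nearest net point, the mean-value/integral bound along the chord, homogeneity to push the derivative evaluation back to the sphere, then Kellogg, then solve for $\norm{P}_{\infty}$). The gap is in the derivative bound, specifically in the last step where you extract the constant. Your slot-by-slot telescoping is sound as far as it goes: the base-point swap costs at most $\norm{D^{(k+1)}P}_{\infty}\,\delta_{0}\leq d^{2}\norm{D^{(k)}P}_{\infty}\,\delta_{0}$ and each of the $k$ tangent-slot swaps costs at most $\norm{D^{(k)}P}_{\infty}\,\delta_{j}$, so the total discrepancy is at most $\bigl(d^{2}\delta_{0}+\sum_{j=1}^{k}\delta_{j}\bigr)\norm{D^{(k)}P}_{\infty}\leq (d^{2}+k)\,\delta\,\norm{D^{(k)}P}_{\infty}$. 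But the Cauchy--Schwarz step you invoke to turn this into $d^{2}\sqrt{k+1}\,\delta$ does not work: with weights $(d^{2},1,\ldots,1)$ and increments $(\delta_{0},\ldots,\delta_{k})$, each only known to satisfy $\delta_{j}\leq\delta$, Cauchy--Schwarz gives $\sqrt{d^{4}+k}\cdot\bigl(\sum_{j}\delta_{j}^{2}\bigr)^{1/2}\leq\sqrt{d^{4}+k}\cdot\delta\sqrt{k+1}$, and combining with $\sqrt{d^{4}+k}\leq d^{2}\sqrt{k+1}$ yields $d^{2}(k+1)\delta$ --- a full factor $\sqrt{k+1}$ too large. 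You would need $\bigl(\sum_{j}\delta_{j}^{2}\bigr)^{1/2}\leq\delta$, but $\cN^{k+1}$ is only a $\delta\sqrt{k+1}$-net on the product of spheres. Nor does the direct bound help: $d^{2}+k\leq d^{2}\sqrt{k+1}$ fails whenever $d^{2}<\sqrt{k+1}+1$ (e.g.\ for $d=1$ and any $k\geq 1$). So what your argument actually proves is the lemma with denominator $1-\delta(d^{2}+k)$ (resp.\ $1-\delta(d+k)$ in the equal-degree case), which is incomparable with the stated $1-\delta d^{2}\sqrt{k+1}$: sharper for $d\geq 2$ and small $k$, but strictly weaker --- and possibly vacuous --- for $d=1$ or for $k$ large relative to $d$.

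The $\sqrt{k+1}$ in the statement comes from a different reduction, which avoids paying separately for each slot. The paper regards $Q(x,u_{1},\ldots,u_{k}):=D^{(k)}P(x)(u_{1},\ldots,u_{k})$ as a \emph{single} homogeneous polynomial system in $(k+1)n$ variables of degree $d$, notes that $\cN^{k+1}$ is a $\delta\sqrt{k+1}$-net on $S^{n-1}\times\cdots\times S^{n-1}$, and then applies the inequality already established for $\norm{P}_{\infty}$ to $Q$ with mesh $\delta\sqrt{k+1}$ in place of $\delta$; the Kellogg factor $d^{2}$ (resp.\ $d$) is then incurred exactly once and multiplies $\delta\sqrt{k+1}$. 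To prove the lemma as stated you should switch to this reduction; alternatively, you could keep your telescoping argument but would then have to restate the conclusion with $1-\delta(d^{2}+k)$ in the denominator, which is not the inequality asserted.
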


\begin{proof} We first prove Assertion (2). Observe that the Lipschitz 
constant of $P$ on $S^{n-1}$ is bounded from 
above by $\norm{D^{(1)}p}_{\infty}$: This can be seen by taking 
$x,y \in S^{n-1}$ and considering the integral 
$P(x)-P(y)=  \int_0^{1} DP(y+t(x-y))(x-y) \; dt$.  

Since $\norm{y+t\cdot (x-y)}_2 \leq 1$ for all $t \in [0,1]$, the homogeneity 
of the system $P$ implies  

$$\norm{DP(y+t(x-y))(x-y)}_2 \leq \norm{D^{(1)}P}_{\infty} \norm{x-y}_2 $$ 

Using our earlier integral formula, we conclude that  
$\norm{P(x)-P(y)}_2 \leq \norm{D^{(1)}P}_{\infty} \norm{x-y}_2$. 

Now, when the degrees of the $p_i$ are identical,
let the Lipschitz constant of $P$ be $M$. By Assertion (2) of 
Theorem \ref{Kellogsys} we have $M \leq \norm{D^{(1)}P}_{\infty} \leq d 
\norm{P}_{\infty}$. Let $x_0 \in S^{n-1}$ be such that 
$\norm{P(x_0)}_2=\norm{P}_{\infty}$ and 
let $y \in \cN$ satisfy $\abs{x_0-y} \leq \delta$. Then 
$\norm{P}_{\infty}=\norm{P(x_0)}_2 \leq \norm{P(y)}_2 + \norm{x_0-y}_2 M 
\leq \max_{\cN}(P)+ \delta d \norm{P}_{\infty}$, 
and thus
$$ (\star) \ \ \ \ \ \ \ \ \ \ \ \ \ \ \ \ \ \ \ \ 
\ \ \ \ \ \ \ \ \ \ \ \ \ \ \ \ 
\norm{P}_{\infty}(1-d \delta) \leq \max_{x\in\cN}P(x).\ \ \ \ \ \ \ \ \ \ \ \ 
\ \ \ \ \ \ \ \ \ \ \ \ \ \ \ \ \ \ \ \ \ \ \ \ \mbox{}$$ 

To bound the norm of $D^{(k)} P(x) (u_1, \ldots, u_k)$ let us consider the net 
defined by\linebreak 
$\cN \times \cdots \times \cN = \cN^{k+1}$ on $ S^{n-1} \times \cdots \times 
S^{n-1}$. Let $x:=(x_1, \ldots , x_{k+1}) \in S^{n-1} \times \cdots \times 
S^{n-1}$ and $y:=(y_1,\ldots,y_{k+1}) \in \cN^{k+1}$ be such that 
$\norm{x_i-y_i}_2 \leq \delta$ for all $i$. Clearly,  
$\norm{x-y}_2 \leq \delta \sqrt{k+1}$. Since $x$ was arbitrary, this 
argument proves that $\cN^{k+1}$ is a $\delta\sqrt{k+1}$-net. Note also that 
$D^{(k)} P(x) (u_1, \ldots, u_k)$ is a homogenous polynomial system with 
$(k+1)n$ variables and degree $d$. The desired bound then follows from 
Inequality ($\star$) obtained above. 

To prove Assertion (1) of our current lemma, the preceding proof carries over 
verbatim, simply employing Assertion (1), instead of Assertion (2), from 
Theorem \ref{Kellogsys}.  
\end{proof}

\section{Condition Number of Random Polynomial Systems}\label{CC}
\subsection{Introducing Randomness}
Now let $P:=(p_1,\ldots , p_m)$ be a {\em random} polynomial system where 
$p_j(x):=\sum_{|\alpha|=d_j} c_{j,\alpha} \sqrt{\binom{d_j}{\alpha}} 
x^{\alpha}$. In particular, recall that $N_j= { n+d_j -1 \choose d_j}$ 
and we let $C_j=\left(c_{j,\alpha}\right)_{|\alpha|=d_j}$ be a random 
vector in $\R^{N_{j}}$ satisfying the Centering, Sub-Gaussian, and Small Ball 
assumptions from the introduction. Letting 
$\cX_j:=\left(\sqrt{\binom{d_j}{\alpha}}x^{\alpha}
\right)_{|\alpha|=d_j}$ we then have $p_j(x)=\langle C_j , 
\cX_j\rangle$. 
In particular, recall that the Sub-Gaussian assumption is that 
there is a $K\!>\!0$ such that for each $\theta \in S^{N_j-1}$ and $t\!>\!0$ 
we have $\mathrm{Prob} \left( \abs{ \langle C_{j}, \theta 
\rangle } \geq t \right) \leq 2 e^{-t^2/K^2}$. 
Recall also that the Small Ball assumption is that there is a $c_0>0$ such 
that for every vector $a \in \mathbb{R}^{N_{i}}$ and $\eps\!>\!0$ we have
$\mathrm{Prob}\left( \abs{ \langle a, C_j \rangle } \leq 
\eps \norm{a}_2 \right) \leq c_0 \eps$.
In what follows, several of our bounds will depend on the  
parameters $K$ and $c_0$ underlying the random variable being Sub-Gaussian 
and having the Small Ball property. 

For any random variable $\xi$ on $\R$ we denote its median 
by $\mathrm{Med}(\xi)$. Now, if 
$\xi:=\left| \langle C_{j} , \theta \rangle \right|$, then setting 
$t\!:=\!2K$ in the Sub-Gaussian assumption for $C_j$ yields 
$\mathrm{Prob} ( \xi \geq 2K) \leq \frac{1}{2}$, i.e.,  
$\mathrm{Med}(\xi) \leq 2 K$. On the other hand, setting 
$\varepsilon:= \frac{1}{2 c_0}$ in the Small Ball assumption for 
$C_j$ yields $\mathrm{Prob} ( \xi \leq \frac{1}{ 2c_0}) \leq \frac{1}{2}$,  
i.e., $\mathrm{Med}(\xi) \geq \frac{1}{2c_0}$. Writing 
$1\!=\!\mathrm{Med}(\xi)\cdot \frac{1}{\mathrm{Med}(\xi)}$ we 
then easily obtain  
\begin{eqnarray} 
\label{ineq:kc} 
Kc_0 & \geq & \frac{1}{4}. 
\end{eqnarray} 

\noindent 
In what follows we will use Inequality (\ref{ineq:kc}) several times.

\subsection{The Sub-Gaussian Assumption and Bounds Related to Operator Norms}. 
We will need the following inequality, reminiscent of Hoeffding's classical 
inequality \cite{hoeffding}. 
\begin{thm} \cite[Prop.\ 5.10]{V} \label{Bernstein} 
There is an absolute constant $c\!>\!0$ with the following 
property: If $X_1,\ldots, X_n$ are Sub-Gaussian random variables with mean 
zero and underlying constant $K$, and $a=(a_1,\ldots,a_n) \in \mathbb{R}^n$ 
and $t \geq 0$, then 
$$  \mathrm{Prob}\left( \left|\sum_i a_i X_i\right| \geq t  \right) \leq 2 
\exp\left(\frac{-ct^2}{K^2 \norm{a}_2^2 }\right). \ \ \ \ \text{ \qed}$$ 
\end{thm}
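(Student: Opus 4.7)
The plan is to follow the standard Chernoff/Hoeffding recipe, suitably adapted to the sub-Gaussian setting (the statement as written implicitly requires independence of the $X_i$, since without it one cannot hope to factorize the moment generating function and the claim is false; I will proceed under this independence assumption, which matches \cite{V}). The starting point is the equivalence between having sub-Gaussian tails and having a controlled moment generating function: from the assumed bound $\mathrm{Prob}(|X_i|\geq t)\leq 2e^{-t^2/K^2}$, a routine integration by parts gives $\mathbb{E}[X_i^{2k}]\leq (2k)!\,(C_1K)^{2k}/k!$ for an absolute constant $C_1$, and then a Taylor expansion (using $\mathbb{E} X_i=0$) yields $\mathbb{E}\bigl[e^{\lambda X_i}\bigr]\leq e^{C_2 K^2 \lambda^2}$ for every $\lambda\in\R$, with an absolute constant $C_2>0$.

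Next I would bound the MGF of the weighted sum. Write $S:=\sum_i a_i X_i$. By independence,
\[
\mathbb{E}\bigl[e^{\lambda S}\bigr]=\prod_{i=1}^n \mathbb{E}\bigl[e^{\lambda a_i X_i}\bigr]\leq \prod_{i=1}^n e^{C_2 K^2 \lambda^2 a_i^2}=e^{C_2 K^2\lambda^2\|a\|_2^2}.
\]
Now apply Markov's inequality to $e^{\lambda S}$ for $\lambda>0$ to obtain
\[
\mathrm{Prob}(S\geq t)\leq e^{-\lambda t}\mathbb{E}\bigl[e^{\lambda S}\bigr]\leq \exp\!\bigl(-\lambda t+C_2 K^2\lambda^2\|a\|_2^2\bigr).
\]
Optimizing in $\lambda$ by taking $\lambda=t/(2C_2 K^2\|a\|_2^2)$ gives the one-sided bound $\mathrm{Prob}(S\geq t)\leq \exp\bigl(-t^2/(4C_2 K^2\|a\|_2^2)\bigr)$.

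Finally, I would apply the same argument to $-S$ (which is again a sum of independent centered sub-Gaussian variables with the same parameter), union-bound the two tails, and set $c:=1/(4C_2)$ to obtain the claimed inequality. The only conceptually delicate step is the first one, namely the tail-to-MGF passage; everything afterwards is the classical Chernoff optimization. This is where one pays for working with sub-Gaussian rather than bounded random variables: Hoeffding's original argument uses the convexity bound $e^{\lambda x}\leq \tfrac{b-x}{b-a}e^{\lambda a}+\tfrac{x-a}{b-a}e^{\lambda b}$ on $[a,b]$, which is unavailable here, so we must instead extract the MGF bound from the tail bound via moments. After that step is in place, the rest of the proof is routine and independent of any geometry of the coefficient vector $a$ beyond its Euclidean norm.
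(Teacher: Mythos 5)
Your argument is correct: the paper states this result without proof, simply citing \cite[Prop.\ 5.10]{V}, and your route --- passing from the tail bound to a moment generating function bound $\mathbb{E}[e^{\lambda X_i}]\leq e^{CK^2\lambda^2}$ via moments, factorizing by independence, applying the Chernoff optimization, and union-bounding the two tails --- is precisely the standard proof given in that reference. Your observation that independence of the $X_i$ must be assumed even though it is omitted from the statement is also correct, and independence does hold wherever the paper invokes this theorem (the $p_i(x)=\langle C_i,\mathcal{X}_i\rangle$ are independent because the coefficient vectors $C_i$ are).
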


\begin{lem}\label{operatornorm}
Let $P:=(p_1,\ldots,p_m)$ be a random polynomial system where, as before, 
$p_j(x)=\sum_{|\alpha|=d_j} c_{j,\alpha}\sqrt{\binom{d_j}{\alpha}} 
x^{\alpha}$ and the the coefficient vectors $C_j$ are independent random 
vectors satisfying 
the Centering, Sub-Gaussian, and Small Ball assumptions from the introduction, 
with underlying constants $K$ and $c_0$. 
Then, for $\cN$ a $\delta$-net over $S^{n-1}$ and $t \geq 2$, we have the 
following inequalities: 
\begin{enumerate}
\item If $\deg(p_j)=d$ for all $j \in \{ 1, \ldots, m \}$ then 
$$ \mathrm{Prob} \left( \norm{P}_{\infty} \leq \frac{ 2 t K\sqrt{m}}
{1-d\delta} \right) \geq 1-2 \abs{\cN} e^{-O(t^2 m)} $$

\noindent 
In particular, there is a constant $c_1\!\geq\!1$ such that 
for $\delta=\frac{1}{3d}$ and 
$t=s\log(ed)$ with $s \geq 1$ we have $\mathrm{Prob} \left( \norm{P}_{\infty} 
\leq  3 s K \sqrt{m} \log(ed) \right) \geq 1 -  e^{-c_1 s^2 m \log(ed)}$. 

\item If $d:=\max_j \deg p_j$ then 

$$ \mathrm{Prob} \left(\norm{P}_{\infty} \leq \frac{2 t K\sqrt{m}}
{1-d^2\delta} \right) \geq 1-2 \abs{\cN} e^{-O(t m)} $$

\noindent 
In particular, there is a constant $c_2\!\geq\!1$ such that 
for $\delta=\frac{1}{3d^2}$, $t=s\log(ed)$ with $s \geq 1$, 
we have $\mathrm{Prob} \left( \norm{P}_{\infty} \leq  3 s K \sqrt{m} \log(ed) 
\right) \geq 1 - e^{-c_2 s^2 m \log(ed)}$.  
\end{enumerate}

\end{lem}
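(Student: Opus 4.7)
The plan is to combine the sphere-to-net comparison in Lemma \ref{net-norm} with a single-point Sub-Gaussian tail bound for $\norm{P(y)}_2$ and a union bound over the net. The key algebraic observation is that, for every $y\in S^{n-1}$ and every $j$,
$$\norm{\cX_j(y)}_2^2 = \sum_{|\alpha|=d_j}\binom{d_j}{\alpha}y^{2\alpha} = (y_1^2+\cdots+y_n^2)^{d_j}=1,$$
so that $p_j(y)=\langle C_j,\cX_j(y)\rangle$ is a centered Sub-Gaussian random variable with constant $K$, obtained by applying the Sub-Gaussian Assumption at the unit direction $\cX_j(y)\in S^{N_j-1}$. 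Since the $C_j$ are independent, the coordinates of the random vector $P(y)\in\R^m$ are independent centered Sub-Gaussians with the same constant.

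For each fixed $\theta\in S^{m-1}$, the identity $\langle \theta, P(y)\rangle = \sum_{j}\theta_j \langle C_j,\cX_j(y)\rangle$ exhibits $\langle\theta,P(y)\rangle$ as a linear combination of $m$ independent centered Sub-Gaussians with coefficient vector $\theta$ of unit $\ell_2$-norm, so Theorem \ref{Bernstein} yields
$$\mathrm{Prob}\!\left(\abs{\langle\theta,P(y)\rangle}\geq s\right)\leq 2\exp\!\left(-\tfrac{cs^2}{K^2}\right).$$
To pass from a fixed $\theta$ to $\norm{P(y)}_2$, I would fix a $\tfrac{1}{2}$-net $\cN_{m}\subset S^{m-1}$ of cardinality at most $5^{m}$, so that $\norm{P(y)}_2 \leq 2\max_{\theta\in\cN_m}\abs{\langle\theta,P(y)\rangle}$. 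Union-bounding over $\cN_m$ and taking $s=tK\sqrt{m}$ gives
$$\mathrm{Prob}\!\left(\norm{P(y)}_2\geq 2tK\sqrt{m}\right)\leq 2\cdot 5^{m}\exp(-ct^2 m),$$
and for $t\geq 2$ the factor $5^m$ is absorbed into the exponent after relabeling the absolute constant.

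Having controlled $\norm{P(y)}_2$ at a single $y$, I would take a further union bound over $\cN$ to obtain
$$\mathrm{Prob}\!\left(\max_{y\in\cN}\norm{P(y)}_2\geq 2tK\sqrt{m}\right)\leq 2\abs{\cN}\exp(-ct^2 m).$$
Feeding this into Lemma \ref{net-norm} with the appropriate denominator, namely $1-\delta d$ under the equal-degree hypothesis of Assertion (1) and $1-\delta d^2$ in the general setting of Assertion (2), yields the advertised bounds on $\norm{P}_\infty$. The explicit consequences follow by specializing $\delta$ so that $1-\delta d=\tfrac{2}{3}$ (resp.\ $1-\delta d^2=\tfrac{2}{3}$) and setting $t=s\log(ed)$, whereupon the factor $\log|\cN|\leq O(n\log d)$ coming from the volumetric bound $|\cN|\leq (C/\delta)^n$ is absorbed into the tail exponent by a suitable choice of $c_1$ and $c_2$.

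The main technical obstacle is balancing the two nets: the Euclidean net on $S^{m-1}$ contributes a factor $5^m$ that the Sub-Gaussian tail must absorb (which is what forces the hypothesis $t\geq 2$), while the spatial net $\cN$ on $S^{n-1}$ must be fine enough that Theorem \ref{Kellogsys} lets us recover $\norm{P}_\infty$ from $\max_\cN(P)$ — this is precisely why the denominator degrades from $1-\delta d$ to $1-\delta d^2$ between the two assertions — yet coarse enough that $\log\abs{\cN}$ does not overwhelm the exponent $t^2 m$.
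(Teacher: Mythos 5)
Your proposal is correct and follows essentially the same route as the paper: the identity $\norm{\cX_j(y)}_2=1$ gives a pointwise Sub-Gaussian bound on each $p_j(y)$, Theorem \ref{Bernstein} plus a $\tfrac12$-net on $S^{m-1}$ tensorizes this to a tail bound on $\norm{P(y)}_2$, a union bound over $\cN$ controls $\max_{\cN}(P)$, and Lemma \ref{net-norm} converts that to a bound on $\norm{P}_\infty$ with denominator $1-\delta d$ or $1-\delta d^2$ according to Theorem \ref{Kellogsys}. The only cosmetic difference is that you invoke the standard fact $\norm{v}_2\leq 2\max_{\theta\in\cN_{1/2}}\abs{\langle\theta,v\rangle}$ directly where the paper applies Lemma \ref{net-norm} to the linear polynomial $\langle\,\cdot\,,P(x)\rangle$; these are the same estimate.
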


\begin{proof}
We prove Assertion (2) since the proofs of the two assertions are virtually 
identical. First observe that the identity 
$(x_1^2 + \cdots + x_n^2)^d=\sum_{|\alpha|=d} 
\binom{d}{\alpha} x^{2\alpha} $ implies $ \norm{\cX_j}_2=1$ for all 
$j\leq m$. Using our Sub-Gaussian assumption on the random vectors $C_j$, and 
the fact that $ p_j(x)= \langle C_j , \cX_j\rangle$, we obtain that 
$ \mathrm{Prob} \left( |p_j(x)| \geq t \right) \leq 2e^{-t^2/K} $
for every $x \in S^{n-1}$.  

Now we need to tensorize the preceding inequality. By Theorem \ref{Bernstein},  
we have for all $a \in S^{m-1}$ that  
$ \mathrm{Prob}\left( \abs{\langle a, P(x) \rangle} \geq t \right) \leq 2 
e^{-ct^2/K^2}$.  Letting $\cM$ be a $\delta$-net on $S^{m-1}$ we then 
have  $\mathrm{Prob}\left( \max_{a \in \cM} \abs{\langle a, P(x) 
\rangle} \geq t  \right) \leq 2 \abs{\cM} e^{-ct^2/K^2}$,  
where we have used the classical\linebreak 
union bound for the multiple events defined 
by the (finite) $\delta$-net $\cM$. Since\linebreak 
$\norm{P(x)}_2=
\max_{\theta \in S^{m-1}} \abs{\langle \theta, P(x) \rangle}$, an application 
of Lemma \ref{net-norm} for the linear polynomial
$\langle \; \cdot \; , P(x) \rangle$ gives us $\mathrm{Prob}\left(\norm{P(x)}_2 
\geq  \frac{t \sqrt{m} K}{1-\delta} \right) \leq 2 \abs{\cM} e^{-c t^2m}$. 

It is known that for any $\delta\!>\!0$, $S^{m-1}$ admits a $\delta$-net 
$\cM$ such that 
$ \abs{\cM} \leq \left(\frac{3}{\delta}\right)^m$ (see, e.g,  
\cite[Lemma 5.2]{V}). So for $t \geq 1$ and $\delta=\frac{1}{2}$ we have
$\mathrm{Prob}\left( \norm{P(x)}_2 \geq  2 t \sqrt{m} K \right)  \leq 2 
e^{-c_2 t^2 m}$ for some suitable constant $c_2\!\geq\!c$. We have thus 
arrived at a 
point-wise estimate on $\norm{P(x)}_2$. Doing a union bound on a $\delta$-net  
$\cN$ now on $S^{n-1}$ we then obtain: 

$$ \mathrm{Prob}\left( \max_{x \in \cN} \norm{P(x)}_2 \geq 2 t \sqrt{m} K 
\right) \leq 2 \abs{\cN} e^{-c_1 t^2m}. $$

\noindent 
Using Lemma \ref{net-norm} once again completes our proof. 
\end{proof}

Theorem \ref{Kellogsys} and Lemma \ref{operatornorm} then 
directly imply the following:
\begin{cor}  Let $P$ be a random polynomial system as in Lemma 
\ref{operatornorm}. Then there are constants $c_1,c_2\!\geq\!1$ such that 
the following inequalities hold for $s \geq 1$:  
\begin{enumerate}
\item If $\deg(p_j)=d$ for all $j \in \{ 1,\ldots,m\}$ then both 
$\mathrm{Prob} \left( \norm{D^{(1)}P}_{\infty} \leq  3s K \sqrt{m} d \log(ed) 
\right)$\linebreak 
\scalebox{.95}[1]{and $\mathrm{Prob} \left( \norm{D^{(2)}P}_{\infty} \leq   
3s K \sqrt{m} d^2 \log(ed)  \right)$ are bounded from below by 
$1-2e^{-c_1 s^2 m \log(ed)}$.}  

\item If \mbox{$d:=\max_j \deg p_j$} then both 
$\mathrm{Prob} \left( \norm{D^{(1)}P}_{\infty} \leq  3s K \sqrt{m} d^2 
\log(ed) \right)$ and\linebreak 
\scalebox{.97}[1]{$\mathrm{Prob} \left( \norm{D^{(2)}P}_{\infty} \leq   
3s K \sqrt{m} d^4 \log(ed)  \right)$ are bounded from below by $1 
- 2 e^{-c_2 s^2 m \log(ed)}$. \qed}  
\end{enumerate}
\end{cor}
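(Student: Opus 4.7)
The plan is to combine the deterministic estimates of Theorem~\ref{Kellogsys} with the probabilistic control on $\|P\|_\infty$ furnished by Lemma~\ref{operatornorm}. The randomness is entirely packaged inside Lemma~\ref{operatornorm}, so the remaining work is purely analytic: bound $\|D^{(1)}P\|_\infty$ and $\|D^{(2)}P\|_\infty$ by suitable multiples of $\|P\|_\infty$, and then substitute.

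For Assertion~(1), all $p_j$ are homogeneous of the same degree $d$, so Theorem~\ref{Kellogsys}(2) immediately gives $\|D^{(1)}P\|_\infty \leq d\|P\|_\infty$. To bound the second derivative, I would iterate: fix $u\in S^{n-1}$ and consider the polynomial system $Q_u(x) := D^{(1)}P(x)(u)$, whose components are homogeneous of degree $d-1$ and which satisfies $\|Q_u\|_\infty \leq \|D^{(1)}P\|_\infty$ by definition. Since $D^{(1)}Q_u(x)(v) = D^{(2)}P(x)(u,v)$, a second application of Theorem~\ref{Kellogsys}(2) gives $\|D^{(1)}Q_u\|_\infty \leq (d-1)\|Q_u\|_\infty$, and taking a supremum over $u\in S^{n-1}$ yields $\|D^{(2)}P\|_\infty \leq d^2 \|P\|_\infty$.

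For Assertion~(2), the degrees $d_j$ may differ, so the components of $\nabla P$ are no longer mutually homogeneous. I would therefore use Theorem~\ref{Kellogsys}(1), which does not require a common degree, obtaining $\|D^{(1)}P\|_\infty \leq d^2 \|P\|_\infty$. Iterating with the same $Q_u$ construction---whose components now have degrees $d_j-1\leq d$---a second application of Theorem~\ref{Kellogsys}(1) gives $\|D^{(2)}P\|_\infty \leq d^2\|D^{(1)}P\|_\infty \leq d^4 \|P\|_\infty$.

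With the four deterministic inequalities in hand, the probabilistic bounds drop out at once: Lemma~\ref{operatornorm}(1), specialized to $\delta = 1/(3d)$ and $t = s\log(ed)$, gives $\mathrm{Prob}(\|P\|_\infty \leq 3sK\sqrt{m}\log(ed)) \geq 1 - e^{-c_1 s^2 m \log(ed)}$ in the equal-degree case, and Lemma~\ref{operatornorm}(2), with $\delta = 1/(3d^2)$, gives the analogous estimate for general degrees. Multiplying the event-side bound on $\|P\|_\infty$ by the factors $d$, $d^2$, $d^2$, $d^4$ produces the four claimed inequalities for $\|D^{(1)}P\|_\infty$ and $\|D^{(2)}P\|_\infty$, with the harmless slack of $1-2e^{-\cdot}$ in place of $1-e^{-\cdot}$. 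The one mildly subtle step is the iteration in the unequal-degree case, where one must invoke Theorem~\ref{Kellogsys}(1) rather than (2), since the intermediate system $Q_u$ inherits mixed degrees; but this is exactly the setting for which Assertion~(1) of that theorem is formulated, so the argument goes through cleanly.
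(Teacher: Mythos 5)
Your argument is correct and is exactly the route the paper intends: the paper gives no explicit proof (it states that Theorem~\ref{Kellogsys} and Lemma~\ref{operatornorm} ``directly imply'' the corollary), and your deterministic bounds $\norm{D^{(1)}P}_\infty\leq d\norm{P}_\infty$, $\norm{D^{(2)}P}_\infty\leq d^2\norm{P}_\infty$ (equal degrees) and $\norm{D^{(1)}P}_\infty\leq d^2\norm{P}_\infty$, $\norm{D^{(2)}P}_\infty\leq d^4\norm{P}_\infty$ (mixed degrees), obtained by iterating Kellog's estimate on the system $Q_u:=D^{(1)}P(\cdot)(u)$, are precisely the inequalities the paper itself uses later (cf.\ the proof of Lemma~\ref{Taylor}, where $\norm{D^{(k)}P}_\infty\leq d^{2k}\gamma$ is asserted ``similarly''). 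Substituting these into the high-probability event for $\norm{P}_\infty$ from Lemma~\ref{operatornorm} completes the proof as you describe.
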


\subsection{The Small Ball Assumption and Bounds for $L(P)$} 
\label{sub:smallball} 
We will need the following standard lemma (see, e.g., \cite[Lemma 2.2]{RV} 
or \cite{NZ}). 
\begin{lem}
\label{smallBallTensorize} 
\noindent Let $ \xi_{1}, \ldots , \xi_{m}$ be independent random variables 
such that, for every $\eps>0$, we have 
$\mathrm{Prob} \left ( | \xi_{i} | \leq \varepsilon \right) \leq c_0 
\varepsilon$. Then there is a constant $\tilde{c}>0$ such that for every 
$\varepsilon >0$ we have $\mathrm{Prob} 
\left( \sqrt{ \xi_{1}^{2} + \cdots + \xi_{m}^{2} } \leq \varepsilon \sqrt{m} 
\right) \leq \left( \tilde{c} c_0 \varepsilon \right)^{m}$. \qed 
\end{lem}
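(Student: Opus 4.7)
The statement is a standard small-ball tensorization, and my plan is to prove it via the Laplace transform (exponential moment) method, exactly in the spirit of Rudelson--Vershynin.

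First, I would recast the event as $\{\xi_1^2 + \cdots + \xi_m^2 \le \eps^2 m\}$ and apply Markov's inequality to the nonnegative random variable $\exp\!\bigl(-t(\xi_1^2+\cdots+\xi_m^2)/\eps^2\bigr)$ for a parameter $t>0$ to be chosen. Because the $\xi_i$ are independent, the expectation factors:
$$
\mathrm{Prob}\!\left(\sum_{i=1}^m \xi_i^2 \le \eps^2 m\right)
\;\le\; e^{tm}\prod_{i=1}^m \mathbb{E}\!\left[e^{-t\xi_i^2/\eps^2}\right].
$$

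The heart of the argument is to bound each factor using the hypothesis $\mathrm{Prob}(|\xi_i|\le \eps) \le c_0\eps$. Using the layer-cake representation and the substitution $u=e^{-s}$, I would write
$$
\mathbb{E}\!\left[e^{-t\xi_i^2/\eps^2}\right]
=\int_0^\infty \mathrm{Prob}\!\left(|\xi_i|\le \eps\sqrt{s/t}\right)e^{-s}\,ds
\;\le\; c_0\eps\,t^{-1/2}\int_0^\infty \sqrt{s}\,e^{-s}\,ds
\;=\;\tfrac{\sqrt{\pi}}{2}\,c_0\eps\,t^{-1/2},
$$
where the small-ball hypothesis was applied inside the integral. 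Plugging this back in yields the clean estimate
$$
\mathrm{Prob}\!\left(\sum_{i=1}^m \xi_i^2 \le \eps^2 m\right)
\;\le\; \left(\tfrac{\sqrt{\pi}}{2}\,e^{t}\,t^{-1/2}\,c_0\eps\right)^{\!m}.
$$
Choosing any fixed admissible $t$ (for instance $t=\tfrac12$, which minimizes $e^t/\sqrt{t}$) absorbs the extraneous factor into a universal constant $\tilde{c}$, giving the desired bound $(\tilde{c}c_0\eps)^m$.

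The only nontrivial step is the Laplace-transform bound on a single factor; once that is in hand, independence does the rest. I do not expect any real obstacle: the small-ball hypothesis is precisely the right input to keep the integral $\int_0^\infty \sqrt{s}\,e^{-s}\,ds$ finite and produce the crucial $t^{-1/2}$ gain per coordinate that makes tensorization succeed. A minor technical point is to verify that the small-ball assumption, which is only stated for $\eps>0$, is enough to control the integral down to $s=0$; this is immediate since the bound $\mathrm{Prob}(|\xi_i|\le \delta)\le c_0\delta$ is trivially valid (and in fact tight) as $\delta\to 0^+$.
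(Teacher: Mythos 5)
Your proof is correct, and it is essentially the same argument as the one the paper relies on: the paper states this lemma without proof, citing \cite[Lemma 2.2]{RV}, and the proof there is exactly this exponential-moment (Laplace transform) tensorization — Markov's inequality applied to $e^{-t\sum_i\xi_i^2/\eps^2}$, factorization by independence, and the layer-cake bound $\mathbb{E}[e^{-t\xi_i^2/\eps^2}]\leq \tfrac{\sqrt{\pi}}{2}c_0\eps t^{-1/2}$ from the one-dimensional small-ball hypothesis. All steps check out, including the identity $\mathbb{E}[e^{-Y}]=\int_0^\infty \mathrm{Prob}(Y<s)e^{-s}\,ds$ and the harmless fact that the bound $c_0\delta$ may exceed $1$ for large $\delta$.
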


We can then derive the following result: 
\begin{lem} \label{smallball} 
Let $P\!=\!(p_1,\ldots,p_m)$ be a random polynomial system, satisfying   
the Small Ball assumption with underlying constant $c_0$. Then there 
is a constant $\tilde{c}>0$ such that for every 
$\eps > 0$ and $x \in S^{n-1}$ we have 
$\mathrm{Prob} ( \norm{P(x)}_2 \leq \eps \sqrt{m} ) \leq (\tilde{c}c_0 
\eps)^m$. 
\end{lem}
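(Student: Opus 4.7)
The plan is to reduce the statement to a direct application of Lemma \ref{smallBallTensorize} (the tensorization of small ball estimates) by first extracting a one-dimensional small ball estimate for each coordinate $p_j(x)$.

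First I would fix any $x \in S^{n-1}$ and recall the representation $p_j(x) = \langle C_j, \cX_j\rangle$, where $\cX_j = (\sqrt{\binom{d_j}{\alpha}}x^\alpha)_{|\alpha|=d_j}$. The key observation (already used in the proof of Lemma \ref{operatornorm}) is the multinomial identity $(x_1^2+\cdots+x_n^2)^{d_j} = \sum_{|\alpha|=d_j}\binom{d_j}{\alpha}x^{2\alpha}$, which for $x \in S^{n-1}$ gives $\norm{\cX_j}_2 = 1$. Setting $a := \cX_j$ in the Small Ball assumption for $C_j$ then yields
$$\mathrm{Prob}\bigl(|p_j(x)| \leq \eps\bigr) = \mathrm{Prob}\bigl(|\langle C_j,\cX_j\rangle| \leq \eps \|\cX_j\|_2\bigr) \leq c_0\eps$$
for every $\eps>0$.

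Next, I would set $\xi_j := p_j(x)$ for $j=1,\ldots,m$. Since the coefficient vectors $C_1,\ldots,C_m$ are independent by assumption, and each $\xi_j$ is a (deterministic, once $x$ is fixed) function of $C_j$ alone, the random variables $\xi_1,\ldots,\xi_m$ are independent. Each $\xi_j$ satisfies the one-dimensional small ball estimate just derived, so the hypotheses of Lemma \ref{smallBallTensorize} are met with the same constant $c_0$.

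Finally, since $\norm{P(x)}_2 = \sqrt{\xi_1^2+\cdots+\xi_m^2}$, applying Lemma \ref{smallBallTensorize} directly delivers a constant $\tilde{c}>0$ (independent of $m$, $n$, and the degrees) for which
$$\mathrm{Prob}\bigl(\norm{P(x)}_2 \leq \eps\sqrt{m}\bigr) = \mathrm{Prob}\bigl(\sqrt{\xi_1^2+\cdots+\xi_m^2} \leq \eps\sqrt{m}\bigr) \leq (\tilde{c}c_0\eps)^m,$$
as desired. There is essentially no obstacle here; the only mild subtlety is making sure the vector $\cX_j$ really has Euclidean norm $1$ (so that the Small Ball hypothesis gives $c_0\eps$ rather than $c_0\eps/\|\cX_j\|_2$), and that the independence of the $C_j$ transfers to the $\xi_j$ for each fixed $x$ — both of which are immediate from the setup in Section \ref{CC}.
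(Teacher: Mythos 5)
Your proof is correct and follows exactly the paper's argument: apply the Small Ball assumption to $p_j(x)=\langle C_j,\cX_j\rangle$ using $\|\cX_j\|_2=1$ (from the multinomial identity), then invoke the tensorization Lemma \ref{smallBallTensorize} on the independent coordinates. The only difference is that you spell out the independence transfer and the norm computation, which the paper leaves implicit.
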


\begin{proof}
By the Small Ball assumption on the random vectors $C_i$, and observing that 
\linebreak 
$p_i(x)=\langle C_i , \cX_i\rangle$ and  
$\norm{\cX_i}_2=1$ for all $x \in S^{n-1}$, we have 
$\mathrm{Prob}(|p_i(x)| \leq \eps )\leq c_0 \eps$. 
By Lemma \ref{smallBallTensorize} we are done.  \end{proof}

The next lemma is a variant of \cite[Claim 2.4]{Ng}. The motivation for the   
technical statement below, which introduces new parameters 
$\alpha,\beta,\gamma$, is that it is the crucial covering estimate needed to 
prove a central probability bound we'll need later: Theorem \ref{L-theorem}.  
\begin{lem}\label{Taylor}
Let $n\geq 2$, let $P:=(p_1,\ldots,p_m)$ be a system of $n$-variate homogenous 
polynomials, and assume $\norm{P}_{\infty} \leq \gamma$. Let 
$x,y \in S^{n-1}$ be mutually orthogonal vectors with $\cL(x,y) \leq \alpha$, 
and let $r\in[-1,1]$. 
Then for every $w$ with $w=x+ \beta r y + \beta^2 z$ for some $z \in B_2^{n}$ 
, we have the following inequalities:   
\begin{enumerate}
\item If $d:=\max_i d_i$ and $ 0 < \beta \leq d^{-4}$ then 
$\norm{P(w)}_2^2 \leq 8 ( \alpha^2 + (2+e^4) \beta^{4} d^4 \gamma^2)$. 
\item \scalebox{.92}[1]{If $\deg (p_i)=d$ for all $i \in [m]$, and 
$ 0 < \beta \leq d^{-2}$  then $\norm{P(w)}_2^2 \leq 8 ( \alpha^2 
+ (2+e^4) \beta^{4} d^4 \gamma^2 )$.}  
\end{enumerate}
\end{lem}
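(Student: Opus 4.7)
The plan is to Taylor-expand each $p_i(w)$ around $x$ along the perturbation $h:=w-x=\beta r y + \beta^2 z$ (note $\|h\|_2 \le \beta+\beta^2\le 2\beta$), and split each coordinate into an ``$\alpha$-part'' controlled by $\cL(x,y)\le \alpha$ and a ``$\gamma$-part'' controlled via Kellogg together with $\|P\|_\infty\le\gamma$. Concretely, write $p_i(w)=A_i+B_i$ where
\[
A_i:=p_i(x)+\beta r\,D^{(1)}p_i(x)(y),\qquad B_i:=\beta^{2}D^{(1)}p_i(x)(z)+R_i(h),
\]
with $R_i(h):=\sum_{k=2}^{d_i}\tfrac{1}{k!}D^{(k)}p_i(x)(h,\ldots,h)$ the order-$\ge 2$ Taylor remainder. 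Two applications of the elementary inequality $(a+b)^2\le 2a^2+2b^2$ then reduce the problem to bounding $\|A\|_2^2$ and $\|B\|_2^2$ separately.

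To bound $\|A\|_2^2$, I would use that $\cL(x,y)\le \alpha$ forces both $\|P(x)\|_2^2\le \alpha^2$ and $\sum_i (D^{(1)}p_i(x)(y))^2/d_i\le \alpha^2$, so that $\|D^{(1)}P(x)(y)\|_2^2\le d\alpha^2$. Applying $(a+b)^2\le 2a^2+2b^2$ componentwise yields $\|A\|_2^2\le 2\alpha^2+2\beta^2 r^2 d\alpha^2$; since $\beta^2 d\le 1$ under either hypothesis, this becomes $\|A\|_2^2\le 4\alpha^2$, which accounts for the leading $8\alpha^2$ in the stated bound.

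Next I would estimate $\|B\|_2^2\le 2\beta^4\|D^{(1)}P(x)(z)\|_2^2+2\|R(h)\|_2^2$. The middle linear piece is handled by our extension of Kellogg to polynomial systems (Theorem \ref{Kellogsys}), which gives $\|D^{(1)}P\|_\infty\le d^2\gamma$ in case (1) and $\|D^{(1)}P\|_\infty\le d\gamma$ in case (2); since $\|z\|_2\le 1$, either estimate yields $\beta^4\|D^{(1)}P(x)(z)\|_2^2\le d^4\beta^4\gamma^2$. For the remainder I would dualize exactly as in the proof of Theorem \ref{Kellogsys}: pick $a\in S^{m-1}$ realizing $\|R(h)\|_2=\langle a,R(h)\rangle$, and consider the scalar polynomial $q_a:=\sum_i a_i p_i$, which satisfies $\|q_a\|_\infty\le \|P\|_\infty=\gamma$ by Cauchy--Schwarz. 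The quantity $\langle a,R(h)\rangle$ is precisely the order-$\ge 2$ Taylor remainder of $q_a$ at $x$ along $h$. In case (2) the polynomial $q_a$ is homogeneous of degree $d$, so iterating part (2) of Theorem \ref{kellog} gives $\|D^{(k)}q_a\|_\infty\le d^k\gamma$; in case (1), $q_a$ has degree $\le d$ but is non-homogeneous, so iterating part (1) gives $\|D^{(k)}q_a\|_\infty\le d^{2k}\gamma$. Setting $t:=d\|h\|$ in case (2) and $t:=d^2\|h\|$ in case (1), the standard geometric-tail bound
\[
\sum_{k=2}^{d}\frac{t^k}{k!}\;\le\;\frac{t^2}{2}\,e^{t}
\]
combined with the hypotheses on $\beta$ (which are designed so that $t\le 2$ and hence $e^t\le e^2$) produces $\|R(h)\|_2^2\lesssim e^4\,d^4\,\beta^4\,\gamma^2$.

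Combining via $\|P(w)\|_2^2\le 2\|A\|_2^2+2\|B\|_2^2$ and collecting the constants from the two-term splittings yields the claimed inequality $\|P(w)\|_2^2\le 8(\alpha^2+(2+e^4)\beta^4 d^4\gamma^2)$. The principal technical obstacle is case (1): the dualized polynomial $q_a$ is no longer homogeneous, so Kellogg costs a factor $d^{2k}$ rather than $d^k$ per iterate, which would give a bound of the form $d^8\beta^4\gamma^2$ if $\beta$ were taken only as small as $d^{-2}$. The stronger hypothesis $\beta\le d^{-4}$ is exactly what is needed to keep the effective Taylor parameter $t=d^2\|h\|$ bounded so that the geometric tail still telescopes to the ``leading'' $\Theta(t^2)$ behavior, and to let the extra powers of $d$ be absorbed into $\beta$ so as to recover the same $d^4\beta^4\gamma^2$ scaling as in case (2). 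The remaining effort is purely bookkeeping of absolute constants to land on $(2+e^4)$.
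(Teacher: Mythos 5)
Your overall architecture (Taylor expansion at $x$ along $h=\beta r y+\beta^2 z$, an $\alpha$-part controlled by $\cL(x,y)\le\alpha$, a $\gamma$-part controlled by dualizing to a scalar polynomial $q_a$ and invoking Kellogg) matches the paper's, and your argument for Assertion (2) goes through. But there is a genuine gap in Assertion (1), precisely at the step you flag as the ``principal technical obstacle.'' Lumping the entire order-$\ge 2$ remainder into $R(h)$ and bounding it by $\sum_{k\ge2}\frac{(d^2\|h\|)^k}{k!}\gamma\le\frac{t^2}{2}e^t\gamma$ with $t=d^2\|h\|$ yields $\|R(h)\|_2^2\lesssim e^4 d^8\beta^4\gamma^2$: the dominant contribution is the $k=2$ term $\frac12 D^{(2)}q_a(x)(h,h)$, whose leading piece $\frac12\beta^2r^2D^{(2)}q_a(x)(y,y)$ carries only $\beta^2$ and, with the crude bound $\|D^{(2)}q_a\|_\infty\le d^4\gamma$, contributes $\beta^4 d^8\gamma^2$ after squaring. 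This is \emph{not} absorbed by the hypothesis $\beta\le d^{-4}$: your bound $\beta^4d^8\gamma^2$ and the target $\beta^4 d^4\gamma^2$ scale identically in $\beta$, so their ratio is $d^4$ no matter how small $\beta$ is. Shrinking $\beta$ only helps terms carrying \emph{strictly more} powers of $\beta$ than the target --- which is why the $k\ge3$ terms and the mixed $(y,z)$ and $(z,z)$ second-order pieces (weights $\beta^6$ and $\beta^8$ after squaring) are fine, but the pure $(y,y)$ second-order piece is not.

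The paper repairs exactly this point by \emph{not} folding the second-order terms into the remainder: it expands $D^{(2)}p_j(x)(\cdot,\cdot)$ over the pairs $(y,y),(y,z),(z,y),(z,z)$ and uses the sharper orthogonal-direction half of Theorem \ref{Kellogsys}(1) (a derivative taken in a direction $y\perp x$ costs a factor $d$ rather than $d^2$), so that $|D^{(2)}p_j(x)(y,y)|\lesssim d^2\gamma$ and the problematic term becomes $\beta^4d^4\gamma^2$. Only the genuinely third-order integral remainder is bounded crudely (by $d^6\gamma$), and there the excess powers of $d$ really are absorbed by $\beta\le d^{-4}$ because that term comes with $\beta^6$ after squaring. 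To fix your proof you must peel off the $(y,y)$ component of the Hessian term and bound it by iterating the tangential estimate rather than using $\|D^{(2)}q_a\|_\infty$; as written, your case (1) only establishes $\|P(w)\|_2^2\le 8(\alpha^2+C\beta^4 d^8\gamma^2)$.
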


\begin{proof} 
We will prove just Assertion (1) since the proof of Assertion (2) is almost 
the same. We start with some auxiliary observations on $\norm{P}_{\infty}$: 
First note that Theorem \ref{Kellogsys} tells us that 
$\norm{P}_{\infty} \leq \gamma$ implies 
$\norm{D^{(1)}P}_{\infty} \leq d^2 \gamma$ and, similarly, 
$\norm{D^{(k)}P}_{\infty} \leq d^{2k}\gamma$ for every $k\geq 1$. Also,  
for any $w$ and $u_i \in S^{n-1}$ with $i\!\in\{1,\ldots,k\}$,  
$\norm{P}_{\infty} \leq \gamma$ and the homogeneity of the $p_i$ 
implies $\sup_{u_1,\ldots,u_k} \norm{D^{(k)} P(w) (u_1,\ldots,u_k)}_2 \leq 
\norm{w}_2^{d-k} d^{2k}\gamma$.  
These observations then yield the following inequality for 
$w=x+ \beta r y + \beta^2 z$ with $z \in B_2^{n}$, $|r|\leq 1$, 
$ \beta \leq d^{-1}$, $k=3$, and  $u_1,u_2,u_3 \in S^{n-1}$:\\  
\mbox{}\hfill 
$\norm{D^{(3)} P(w) (u_1,u_2,u_3)}_2 \leq \norm{w}_2^{d-3} d^6 \gamma \leq   
(1+2\beta)^{d-3}  d^6 \gamma $\hfill 
\mbox{}\\ 
\noindent Now, by Taylor expansion, we have the following equality:  
 
$$ p_j(w) = p_j(x)+ \langle \nabla p_j(x), \beta r y+ \beta^2 z \rangle + \frac{1}{2} (\beta r y+ \beta^2  z)^T D^{(2)}p_j(x)(\beta r y+ \beta^2 z) + \left(1+  \beta \right)^{3} \beta^3  A_j(x), $$

\noindent where $ A_j(x):= \int_0^{1} D^{(3)}p_j(x+t\norm{v}_2 v) (v,v,v)dt$  and $v=\frac{\beta r y + \beta^2 z}{\norm{\beta r y + \beta^2 z}}$.  

Breaking the second and third order terms of the expansion of $p_j(w)$ into 
pieces, we then have the following inequality: 

\[
 | p_j(w) | \leq |p_j(x)| + \beta | \langle \nabla p_j(x), y \rangle | + \beta^2 |\langle \nabla p_j(x), z \rangle| + \frac{1}{2} \beta^2 |D^{(2)}p_j(x)(y,y)| + \frac{1}{2} \beta^3 |D^{(2)}p_j(x)(y,z)|   \]
 
\[ + \frac{1}{2} \beta^3 |D^{(2)}p_j(x)(z,y)| + \frac{1}{2} \beta^4 |D^{(2)}p_j(x)(z,z)|   + (1 + \beta )^3 \beta^3 \left| A_j(x) \right|. \]

\noindent Applying the Cauchy-Schwarz Inequality to the vectors 
$(1,\beta d_j^{\frac{1}{2}}, 1, 1, 1, 1, 1, 1)$ and\linebreak  
$(\abs{p_j(x)} , d_j^{-\frac{1}{2}}|\langle \nabla p_j(x), y \rangle |, \ldots, 
(1 + \beta)^3  \beta^3 \left| A_j(x)\right|)$ then implies the following 
inequality: 
\[ p_j(w)^2 \leq (7+\beta^2 d_j)(p_j(x)^2 + d_j^{-1} 
{\langle p_j(x) , y \rangle}^2 + \beta^{4} {\langle \nabla p_j(x), z 
\rangle}^2 + \frac{1}{4} \beta^4 (D_j^{(2)}p_j(x)(y,y))^2  \]

\[ +  \frac{1}{4} \beta^6 |D^{(2)}p_j(x)(y,z)|^2 +  \frac{1}{4} \beta^6 |D^{(2)}p_j(x)(z,y)|^2 + \frac{1}{4} \beta^8 |D^{(2)}p_j(x)(z,z)|^2 + \beta^6 (1+ \beta )^6 A_j(x)^2 ) \]

\noindent We sum all these inequalities for  $j\in\{1,\ldots,m\}$.  On the 
left-hand side we  have $\norm{P(w)}_2^2$. On the right-hand side, the 
summation of the terms $p_j(x)^2 + d_j^{-1} {\langle p_j(x) , y \rangle}^2$ 
is\linebreak   
$\norm{P(x)}_2^2 + \norm{M^{-1} D^{(1)}P(x)(y)}_2^2$, and its magnitude is 
controlled by the assumption\linebreak 
$\cL(x,y) \leq \alpha$. The summations of the 
other terms are controlled by the assumption $\norm{P}_{\infty} \leq \gamma$ 
and Theorem \ref{Kellogsys}. Summing all the inequalities for  $j\in\{1,\ldots,m\}$, we have  

$$ \norm{P(w)}_2^2 \leq (7+\beta^2 d) ( \norm{P(x)}_2^2 + \norm{M^{-1} D^{(1)}P(x)(y)}_2^2 + \beta^{4} d^4 \gamma^2 +  \frac{1}{4}\beta^4 d^4 \gamma^2  $$

$$  +  \frac{1}{4} \beta^6 d^6 \gamma^2 + \frac{1}{4} \beta^6 d^6 \gamma^2+ \frac{1}{4} \beta^8 d^8 \gamma^2 +  \beta^6 (1+ \beta )^6  \sum_{j} A_j(x)^2 ) $$

\noindent The assumption  $\beta \leq d^{-4}$ implies that $\beta^8 d^8 \leq \beta^4 d^4$ and $\beta^6 d^6 \leq \beta^4 d^4$. Therefore,    

$$ \norm{P(w)}_2^2 \leq (7+\beta^2 d) ( \norm{P(x)}_2^2 + \norm{M^{-1} D^{(1)}P(x)(y)}_2^2 + \beta^{4} d^4 \gamma^2 +  \beta^4 d^4 \gamma^2 +  \beta^6 (1+ \beta )^6  \sum_{j} A_j(x)^2 ).$$  

\medskip 
\noindent Clearly  $\sum_{j\leq m} A_j(x)^2 \leq \max_{w \in V_{x,y}} \norm{D^{(3)} P(w) (u_1,u_2,u_3)}_2^2 \leq  (1+2\beta)^{2d-6} d^{12} \gamma^2$. Hence we 
have  
$\norm{P(w)}_2^2 \leq (7+\beta^2 d) ( \alpha^2 + \beta^{4} d^4 \gamma^2 + \beta^4 d^4 \gamma^2 +  (1+ 2\beta)^{2d} \beta^6 d^{12} \gamma^2 )$. 
Since $ \beta \leq d^{-4}$, we finally get  
$\norm{P(w)}_2^2 \leq (7+\beta^2 d) ( \alpha^2 +  (2+e^4) \beta^{4} 
d^4 \gamma^2 )  \leq 8 ( \alpha^{2} + (2+e^4)  \beta^{4} d^{4} \gamma^{2})$. 
\end{proof}

Lemma \ref{Taylor} controls the growth of the norm of the polynomial system 
$P=(p_1,\ldots,p_m)$ over the region 
$\{ w \in \mathbb{R}^n  : w=x+ \beta r y + \beta^2 z  ,  \abs{r} \leq 1 ,  
y \in S^{n-1}, y \perp x , z \in B_2^n  \} $.  Note in particular that 
we are using {\em cylindrical neighborhoods} instead of ball neighborhoods. 
This is because we have found that (a) our approach truly requires us to go to 
order $3$ in the underlying Taylor expansion and (b) cylindrical neighborhoods 
allow us to properly take contributions from tangential directions, and thus 
higher derivatives, into account. 

We already had a probabilistic estimate in Lemma \ref{smallball} that said  
that for any $w$ with $\norm{w}_2 \geq 1$, the probability of $\norm{P(w)}_2$ 
being smaller than $\varepsilon \sqrt{m}$ is less than $\varepsilon^{m}$ up to 
some universal constants. The controlled growth provided by Lemma \ref{Taylor} 
holds for a region with a certain volume, which will ultimately contradict the 
probabilistic estimates provided by Lemma \ref{smallball}. This will be 
the main trick behind the proof of the following theorem.  

\begin{thm}\label{L-theorem}
Let $n\geq 2$ and let $P:=(p_1,\ldots, p_m)$ be a system of random homogenous 
$n$-variate polynomials such that 
$p_j(x)=\sum_{\abs{a}=d_j} c_{j,a} \sqrt{\binom{d_i}{a}} 
x^{a}$ where $C_j=(c_{j,a})_{\abs{a}=d_j}$ are random 
vectors satisfying the Small Ball assumption with underlying constant 
$c_0$. Let $\alpha, \gamma>0$, $d := \max_{i} d_i$, and assume 
$\alpha  \leq \gamma \min\left\{ d^{-6}, d^2/n \right\}$. Then
$$ \mathrm{Prob} ( L(P)\leq \alpha ) \leq  \mathrm{Prob} \left( \|P\|_{\infty} 
\geq \gamma\right)  +   \alpha^{\frac{3}{2}+m-n}  \sqrt{n} ( \gamma d^2)^{n-\frac{3}{2}} \left(  \frac{C c_0 }{ \sqrt{m}} \right)^{ m } $$ 

\noindent where $C$ is a universal constant. 
\end{thm}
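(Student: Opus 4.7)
The plan is to use a Fubini/Markov argument rather than a naive union bound over a net. The key observation is that Lemma \ref{Taylor} not only gives $\|P(x)\|_2 \leq \alpha$ at the witness point $x$, but controls $\|P(w)\|_2$ over a whole \emph{cylindrical} region of $n$-dimensional volume on the order of $\beta^{2n-1}$; integrating the point-wise small-ball estimate over the shell in which this cylinder lives gives the stated bound cleanly, and in particular produces the exponent $\alpha^{m-n+3/2}$ (rather than the weaker $\alpha^{m-n}$ one would get from a straightforward $\beta^2$-net on the sphere).

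Concretely, I would first condition on the event $\mathcal{E} := \{\|P\|_\infty \leq \gamma\}$; its complement contributes the first term. Next, I would set $\beta := \sqrt{\alpha/(d^2\gamma)}$: the hypothesis $\alpha \leq \gamma d^{-6}$ ensures $\beta \leq d^{-4}$, so Lemma \ref{Taylor} applies, and the hypothesis $\alpha \leq \gamma d^2/n$ ensures $n\beta^2 \leq 1$, which will control both the thickness of the ambient shell and the small-ball constant off the sphere. On $\mathcal{E} \cap \{L(P) \leq \alpha\}$, pick orthogonal $x,y \in S^{n-1}$ with $\mathcal{L}(x,y) \leq \alpha$; by Lemma \ref{Taylor} and the choice of $\beta$,
\[
\|P(w)\|_2 \leq C_1 \alpha \quad \text{for every } w \in R_{x,y} := \{x + \beta r y + \beta^2 z : |r| \leq 1,\, z \in B_2^n\},
\]
with $C_1$ an absolute constant arising from $\sqrt{8(3+e^4)}$.

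Next, introduce the annular shell $W := \{w : 1 - 2\beta^2 \leq \|w\|_2 \leq 1 + 2\beta^2\}$, which contains every such $R_{x,y}$, together with the random set $S_\alpha := \{w : \|P(w)\|_2 \leq C_1 \alpha\}$. On the event in question, $\mathrm{vol}(S_\alpha \cap W) \geq \mathrm{vol}(R_{x,y}) \geq 2 V_{n-1} \beta^{2n-1}$, so Markov's inequality combined with Fubini gives
\[
\mathrm{Prob}\bigl(\mathcal{E} \cap \{L(P) \leq \alpha\}\bigr) \leq \frac{\mathbb{E}\!\left[\mathrm{vol}(S_\alpha \cap W)\right]}{2 V_{n-1} \beta^{2n-1}} = \frac{\int_W \mathrm{Prob}\!\left(\|P(w)\|_2 \leq C_1 \alpha\right) dw}{2 V_{n-1} \beta^{2n-1}}.
\]
For each $w \in W$, repeating the proof of Lemma \ref{smallball} using $\|\mathcal{X}_i(w)\|_2 = \|w\|^{d_i}$ together with $n\beta^2 \leq 1$ yields $\mathrm{Prob}(\|P(w)\|_2 \leq C_1 \alpha) \leq (\tilde{C} c_0 \alpha/\sqrt{m})^m$ uniformly on $W$. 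Combining with $\mathrm{vol}(W) \leq 4e^2 n \beta^2 V_n$ and simplifying via $n V_n / V_{n-1} = O(\sqrt{n})$ and $\beta^{3-2n} = (d^2\gamma/\alpha)^{n-3/2}$ produces exactly $\sqrt{n}\,(d^2\gamma)^{n-3/2}\alpha^{m-n+3/2}(C c_0 /\sqrt{m})^m$.

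The main technical obstacle is making the small-ball bound uniform on $W$, since Lemma \ref{smallball} is stated on $S^{n-1}$: one must pass through the point-wise inequality $\mathrm{Prob}(|p_i(w)| \leq \varepsilon) \leq c_0 \varepsilon/\|w\|^{d_i}$ for arbitrary $w$, and the secondary hypothesis $\alpha \leq \gamma d^2/n$ is precisely what keeps $\|w\|^{d_i}$ bounded between two absolute constants on $W$ while simultaneously controlling $\mathrm{vol}(W)$ to first order in $n\beta^2$. A secondary check is that $R_{x,y} \subseteq W$: this follows from $\|x + \beta r y\|_2^2 = 1 + \beta^2 r^2$ and the triangle inequality against the $\beta^2 z$ perturbation, again using $\beta \leq 1$.
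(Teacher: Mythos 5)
Your argument is essentially the paper's own proof: the same choice $\beta^2=\alpha/(\gamma d^2)$, the same Markov--Fubini comparison of the cylinder volume $\asymp \beta^{2n-1}\vol(B_2^{n-1})$ supplied by Lemma \ref{Taylor} against the volume $\asymp n\beta^2\vol(B_2^{n})$ of a thin spherical shell, and the same use of the small-ball estimate of Lemma \ref{smallball} to bound the integrand, yielding the exponent $\alpha^{m-n+\frac{3}{2}}$. The only immaterial difference is in handling points with $\|w\|_2<1$: the paper discards the portion of the cylinder inside $B_2^n$ (keeping at least half its volume) and transfers the small-ball bound outward by homogeneity, whereas you keep the full cylinder inside a two-sided shell and absorb the factor $\|w\|_2^{-d_i}$ into the constant, which is legitimate since $\beta\leq d^{-4}$ keeps $\|w\|_2^{d_i}$ bounded below on your shell $W$.
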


\begin{proof}
\noindent We assume the hypotheses of Assertion (1): 
Let $ \alpha,\gamma >0$ and 
$\beta \leq d^{-4}$. Let $ {\bf B:}= \{P\; | \;  \|P\|_{\infty} \leq 
\gamma\}$ and let \\ 
\mbox{}\hfill $ {\bf L} := \{P\; | \; L(P) \leq \alpha\} = 
\{ P\; | \; \text{There exist } x,y\!\in\!S^{n-1} \text{ with }  
x\perp y \text{ and }  \cL(x,y) \leq  \alpha \} $.\hfill\mbox{}\\ 
Let $\Gamma:=8(\alpha^{2} + (5+e^4) \beta^{4} d^{4} \gamma^{2})$ and 
let $B^n_2$ denote the unit $\ell_2$-ball in $\R^n$. 
Lemma \ref{Taylor} 
implies that if the event ${\bf B}\cap {\bf L}$ occurs then there exists a 
set $$V_{x,y}:= \{ w\in \mathbb R^{n} :w= x+ \beta r y+ \beta^2 z , \abs{r} \leq 1, z \perp y , z \in B_2^{n} \} \setminus B_{2}^{n} $$
such that $ \| P(w) \|_{2}^{2} \leq \Gamma$ for every $w$ in this set. 
Let $V:= \mathrm{Vol}\! \left( V_{x,y} \right)$. Note that for $ w\in V_{x,y}$ 
we have $\norm{w}_2^2 = \norm{x+\beta^2z}_2^2  + \norm{\beta y}_2^2 \leq 1 + 
4 \beta^2$. Hence we have $\norm{w}_2 \leq 1 + 2 \beta^2$. Since\linebreak 
$V_{x,y} \subseteq (1+ 2\beta^2 ) B_{2}^{n} 
\setminus B_{2}^{n} $, we have showed that\\ 
\mbox{}\hfill ${\bf B}\cap {\bf L}\subseteq 
\left\{P\; | \; \mathrm{Vol}\!\left(\{x\in (1+ 2 \beta^2 ) B_{2}^{n}\setminus 
B_{2}^{n} \; | \; \|P(x) \|_{2} \leq \Gamma \}\right)  \geq V \right\}$.\hfill
\mbox{} \\  
Using Markov's Inequality, Fubini's Theorem, and Lemma \ref{smallball}, we 
can estimate the probability of this event. Indeed, \\ 
$\mathrm{Prob}\left(  \mathrm{Vol}\! \left( \{ x\in (1+ 2 \beta^2) B_{2}^{n} \setminus 
B_{2}^{n}  : \|P(x) \|_{2} \leq \Gamma \}  \right)  \geq V  \right)$\hfill\mbox{} 
\begin{eqnarray*} 
& \leq & \frac{1}{ V}  \mathbb{E} \mathrm{Vol}\!\left( \{ x\in (1+ 2 \beta^2) B_{2}^{n} \setminus B_{2}^{n}  : \|P(x) \|_{2}^{2} \leq \Gamma \}  \right) \\ 
& \leq & \frac{1}{V} \int_{ (1+ 2\beta^2) B_{2}^{n} \setminus B_{2}^{n} } 
\mathrm{Prob} \left(\|P(x)\|_{2}^{2} \leq \Gamma \right) dx \\ 
& \leq & \frac{ \mathrm{Vol}\!\left( (1+ 2\beta^2) B_{2}^{n} \setminus B_{2}^{n} \right) }{V} 
\max_{x\in (1+ 2\beta^2) B_{2}^{n} \setminus B_{2}^{n} }  \mathrm{Prob} 
\left( \| P(x)  \|_{2}^{2} \leq \Gamma \right).   
\end{eqnarray*} 
 
Now recall that $\vol(B_{2}^{n})= \frac{\pi^{n/2}}{ \Gamma\left(\frac{n}{2}+1
\right)}$. Then $\frac{\vol(B_{2}^{n})}{\vol(B_{2}^{n-1})} \leq \frac{c'}
{\sqrt{n}}$ for some constant $c'>0$.  Having assumed  
that $\beta^2 \leq \frac{1}{n}$ we obtain 
$(1+2\beta^2)^n \leq 1 + 2n \beta^2 $, and 
we see that 
$$  \frac{\vol((1+ 2\beta^2) B_{2}^{n} \setminus B_{2}^{n}) }{V} 
\leq \frac{\vol(B_{2}^{n})\left( (1+ 2\beta^2)^{n} -1\right) }{ \beta 
(\beta^2)^{n-1} \vol(B_{2}^{n-1})} \leq c \sqrt{n} \beta \beta^{2-2n} ,$$
for some absolute constant $c>0$. Note that here, for a lower bound on $V$, we 
used the fact that $V_{x,y}$ contains more than half of a cylinder with 
base having radius $\beta^2$ and height $2\beta$. 

Writing $ \tilde{x} := \frac{ x}{ \|x\|_{2}}$
for any $ x\neq 0$ we then obtain, for $z\notin B_{2}^{n}$, that 
$$ \| P (z) \|_{2}^{2} = \sum_{j=1}^{m} | p_{j} (z) |^{2} = \sum_{j=1}^{m} 
|p_j (\tilde{z}) |^{2} \| z\|_{2}^{2 d_j} \geq  \sum_{j=1}^{m} | p_{j} 
(\tilde{z}) |^{2} = \| P ( \tilde{z}) \|_{2}^{2}.$$ 
\noindent This implies, via Lemma \ref{smallball}, that for every 
$w\!\in\!(1+ 2\beta^2) B_{2}^{n} \setminus 
B_{2}^{n}$ we have  
$$ \mathrm{Prob} \left( \|P(w) \|_{2}^{2} \leq \Gamma\right) \leq  \mathrm{Prob} \left( \|P(\tilde{w}) \|_{2}^{2} \leq \Gamma\right) \leq 
\left( c c_0 \sqrt{\frac{\Gamma}{m}}\right)^{ m }. $$

So we conclude that\\  
\scalebox{.92}[1]{$\mathrm{Prob} ( L(P)\leq \alpha ) \leq \mathrm{Prob} 
\left(\|P\|_{\infty} 
\geq \gamma\right) + \mathrm{Prob} \left({\bf  B} \cap {\bf L}\right) \leq  
\mathrm{Prob} \left( \|P \|_{\infty} \geq \gamma\right)  + c \sqrt{n} 
\beta \beta^{2-2n} \left( c c_0 \sqrt{\frac{\Gamma}{m}} \right)^m$.}\linebreak  
Recall that $ \Gamma= 8(\alpha^{2} + (5+e^4)\beta^{4} d^{4} \gamma^{2})$. 
Setting $ \beta^2 := \frac{ \alpha }{ \gamma  d^{2}}$,  
our assumption  $\alpha  \leq \gamma \min\left\{ d^{-6}, d^2/n \right\}$ and 
our choice of $\beta$ then imply that 
$\Gamma= C \alpha^2$ for some constant $C$. So we obtain  
$$ \mathrm{Prob} ( L(P)\leq \alpha ) \leq  \mathrm{Prob} \left( \|P\|_{\infty} 
\geq \gamma\right)  + c \sqrt{n} \left( \frac{ \alpha }{ \gamma d^{2}}
\right)^{\frac{3}{2}-n} \left( \frac{C c_0  \alpha }{ \sqrt{m}}\right)^{ m } $$ 

\noindent 
and our proof is complete. \end{proof}

\medskip

\subsection{The Condition Number Theorem and its Consequences} 
\label{sec:main1}
We will now need bounds for the Weyl-Bombieri norms of polynomial systems. 
Note that, with\\ 
\mbox{}\hfill $p_j(x)\!=\!\sum\limits_{\alpha_1+\cdots+\alpha_n=d_j} 
\sqrt{\binom{d_j}{\alpha}} c_{j,\alpha} x^{\alpha}$,\hfill\mbox{}\\ 
we have $\|p_j\|_{W} := \| (c_{j,\alpha})_\alpha \|_{2}$ 
for $j\in\{1,\ldots,m\}$. The following lemma, providing large deviation 
estimates for the Euclidean norm, is standard and follows, for instance, from 
Theorem \ref{Bernstein}. 
\begin{lem} \label{AG}
\noindent 
There is a universal constant $c'\!>\!0$ such that for any random 
$n$-variate polynomial system $P\!=\!(p_1,\ldots,p_m)$ satisfying the 
Centering and Sub-Gaussian assumptions, with underlying constant $K$, 
$j\in\{1,\ldots,m\}$, $N_{j}:=\binom{n+d_j-1}{d_j}$, 
$N:=\sum_{j=1}^{m} N_{j}$, and $t\geq 1$, we have 
\begin{enumerate} 
\item $ \mathrm{Prob}\left(  \|p_{j}\|_W \geq c't K \sqrt{N_{j}}  \right) \leq 
e^{ -t^{2} N_j} $

\item $ \mathrm{Prob}\left(  \|P\|_W \geq c't K \sqrt{N}  \right) \leq 
e^{ -t^{2} N} .$  

\end{enumerate}
\end{lem}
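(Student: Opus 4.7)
The plan is to use a standard covering argument on the unit Euclidean sphere, combining the Sub-Gaussian assumption on each coefficient vector with the Sub-Gaussian tensorization provided by Theorem \ref{Bernstein}. The key identity to exploit is that $\|p_j\|_W = \|C_j\|_2$ and $\|P\|_W = \|(C_1,\ldots,C_m)\|_2$, so both quantities become Euclidean norms of random vectors whose one-dimensional marginals are controlled by hypothesis.

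For Assertion (1), I would start from the variational identity $\|C_j\|_2 = \sup_{\theta\in S^{N_j-1}} \langle C_j,\theta\rangle$. Choose a $(1/2)$-net $\mathcal{N}_j$ on $S^{N_j-1}$ of cardinality $|\mathcal{N}_j|\leq 6^{N_j}$ (as in \cite[Lemma 5.2]{V}), and note the elementary inequality $\|C_j\|_2\leq 2\max_{\theta\in\mathcal{N}_j}\langle C_j,\theta\rangle$. Apply the Sub-Gaussian assumption to each fixed $\theta\in\mathcal{N}_j$, then take a union bound to obtain $\mathrm{Prob}(\|C_j\|_2\geq 2s)\leq 2\cdot 6^{N_j} e^{-s^2/K^2}$. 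Setting $s=c'tK\sqrt{N_j}/2$ with $c'$ chosen large enough to dominate $\log 6$, the prefactor $6^{N_j}$ is absorbed into the exponential and we arrive at the claimed bound $e^{-t^2 N_j}$.

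For Assertion (2), the same net strategy applies on $S^{N-1}$ with a $(1/2)$-net of size at most $6^{N}$; what needs justification is that the concatenated vector $C:=(C_1,\ldots,C_m)\in\mathbb{R}^N$ inherits a Sub-Gaussian property on $S^{N-1}$. Given $\theta=(\theta^{(1)},\ldots,\theta^{(m)})\in S^{N-1}$, write
$$\langle C,\theta\rangle \;=\; \sum_{i=1}^m \|\theta^{(i)}\|_2 \cdot X_i, \qquad X_i := \bigl\langle C_i,\, \theta^{(i)}/\|\theta^{(i)}\|_2\bigr\rangle,$$
where the $X_i$ are independent (by independence of the $C_i$), centered, and Sub-Gaussian with constant $K$, while the vector of coefficients $a_i:=\|\theta^{(i)}\|_2$ has $\|a\|_2=1$. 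Theorem \ref{Bernstein} then yields $\mathrm{Prob}(|\langle C,\theta\rangle|\geq s)\leq 2\exp(-cs^2/K^2)$ uniformly in $\theta\in S^{N-1}$. From here the union bound over the net, followed by the net-to-sphere passage and the choice $s=c'tK\sqrt{N}/2$, concludes the proof in exactly the same fashion as Assertion (1).

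This is a routine argument and I do not expect serious obstacles; the only bookkeeping concern is verifying that a single absolute constant $c'$ (depending on neither $K$ nor the dimensions) can be chosen so that both the $6^{N_j}$ and $6^N$ prefactors are absorbed into the exponentials $e^{-t^2 N_j}$ and $e^{-t^2 N}$ for all $t\geq 1$, which follows by taking $c'$ sufficiently larger than $\sqrt{(\log 6)/c}$.
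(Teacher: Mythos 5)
Your proposal is correct and is precisely the standard argument the paper alludes to: the paper gives no explicit proof, stating only that the lemma ``is standard and follows, for instance, from Theorem \ref{Bernstein},'' and your net-plus-union-bound argument, with Theorem \ref{Bernstein} supplying the sub-Gaussian tail for the marginals of the concatenated vector $(C_1,\ldots,C_m)$, is a correct instantiation of that route. The constant bookkeeping also checks out: taking $c'\geq 2\sqrt{(1+\log 12)/c}$ absorbs both the factor $2$ and the net cardinality into the exponentials for all $t\geq 1$.
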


We are now ready to prove our main theorem on the condition number of 
random polynomial systems. 

\begin{thm}\label{gcondition} There are universal constants $A,c\!>\!0$ 
such that the following hold: 
Let $P=(p_1,\ldots,p_m)$ be a system of homogenous random polynomials with 
$p_j(x)=\sum_{|\alpha|=d_j} c_{j,\alpha}\sqrt{\binom{d_j}{\alpha}} 
x^{\alpha}$\\ 
\linebreak 
\scalebox{.92}[1]{and let
$C_j=\left(c_{j,\alpha}\right)_{|\alpha|=d_j}$ be independent random 
vectors satisfying the Sub-Gaussian and Small Ball}\linebreak 
\scalebox{.91}[1]{assumptions, with respective 
underlying constants $K$ and $c_0$. Assume $n \geq 2$ and let 
$d:=\max_j \deg p_j$.}\linebreak 
Then, setting 
$M\!:=\!\sqrt{\frac{N}{m}} (K c_0 C)^{\frac{m}{m-n+\frac{3}{2}}} 
(3d^2\log(ed))^{\frac{n-\frac{3}{2}}{m-n+\frac{3}{2}}}  n^{\frac{1}{2m-2n+3}}  
\max \left\{ d^{6}, \frac{n}{d^2} \right\}$,  
we have two cases: 
\begin{enumerate}
\item If $ N \geq  m \log(ed)$ then 
$ \mathrm{Prob}(\tilde{\kappa}(P) \geq t M)$ is bounded from above by   
$$\begin{cases}
  \frac{3}{ t^{m-n+\frac{3}{2}}} &\mbox{if } 1\leq t\leq e^{\frac{m\log(ed)}{ m-n+\frac{3}{2}}}\\
 \frac{3}{t^{m-n+\frac{3}{2}}} \left( \frac{(m-n+\frac{3}{2})\log t}{m\log(ed)}\right)^{\frac{n-\frac{3}{2}}{2}}  &\mbox{if } e^{ \frac{ m \log(ed)}{m-n+\frac{3}{2}}} \leq t \leq e^{\frac{N}{m-n+\frac{3}{2}}} \\
\frac{3}{t^{m-n+\frac{3}{2}}} \left( \frac{(m-n+\frac{3}{2})\log t}{N}\right)^{\frac{m}{2}}  \left( \frac{N}{m\log(ed)}\right)^{\frac{n-\frac{3}{2}}{2}}  &\mbox{if }  e^{\frac{N}{m-n+\frac{3}{2}}} \leq t  \end{cases}
 $$
\item  If $ N \leq  m \log(ed)$ then 
$\mathrm{Prob}(\tilde{\kappa}(P) \geq t M )$ is bounded from above by 
$$ \begin{cases}
  \frac{3}{ t^{m-n+\frac{3}{2}}} &\mbox{if } 1\leq t\leq e^{\frac{N}{ m-n+\frac{3}{2}}} \\
\frac{3}{t^{m-n+\frac{3}{2}}} \left( \frac{(m-n+\frac{3}{2})\log t}{N}\right)^{\frac{m}{2}}  
&\mbox{if }  e^{\frac{N}{m-n+\frac{3}{2}}} \leq t  \end{cases}
 $$
\end{enumerate}
\end{thm}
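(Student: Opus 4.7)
The plan rests on the identity $\tilde{\kappa}(P) = \norm{P}_W / L(P)$ (Theorem~\ref{thm:dist} in the square case, and Definition~\ref{dfn:newcond} in general), which turns a large value of $\tilde{\kappa}$ into either a large numerator or a small denominator. For any threshold $A > 0$,
\[
\{\tilde{\kappa}(P) \geq tM\} \subseteq \{\norm{P}_W \geq A\} \cup \{L(P) \leq A/(tM)\}.
\]
I will take $A := c'Kr\sqrt{N}$ for a scale parameter $r \geq 1$, so that Lemma~\ref{AG}(2) bounds $\mathrm{Prob}(\norm{P}_W \geq A)$ by $e^{-r^2 N}$; the remaining task is to control the lower tail of $L(P)$.

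For that lower tail I will apply Theorem~\ref{L-theorem} with $\alpha := A/(tM)$ and $\gamma := 3sK\sqrt{m}\log(ed)$, where $s \geq 1$ is a second scale parameter. Its first summand, $\mathrm{Prob}(\norm{P}_\infty \geq \gamma)$, is bounded by Lemma~\ref{operatornorm}(2) via $e^{-c_2 s^2 m\log(ed)}$, while its main volumetric summand is $\alpha^{m-n+3/2}\sqrt{n}(\gamma d^2)^{n-3/2}(Cc_0/\sqrt{m})^m$. Substituting the definitions of $\alpha$ and $\gamma$ and collecting powers of $K$ (using $K^{m-n+3/2}\cdot K^{n-3/2} = K^m$, which after the exterior $(m-n+3/2)$-th root becomes $K^{m/(m-n+3/2)}$), powers of $m$ (using $m^{(n-3/2)/2}\cdot m^{-m/2} = m^{-(m-n+3/2)/2}$), and powers of $N, c_0, d, n$, this main summand factors as
\[
\frac{r^{m-n+3/2}\, s^{n-3/2}}{t^{m-n+3/2}} \cdot F(n, m, N, d, K, c_0, M),
\]
where $F$ depends on none of $r, s, t$. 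A direct calculation shows $F = 1$ precisely when $M$ matches the expression displayed in the theorem, up to the extra $\max\{d^6, n/d^2\}$ multiplier; that multiplier is exactly what makes the hypothesis $\alpha \leq \gamma\min\{d^{-6}, d^2/n\}$ of Theorem~\ref{L-theorem} hold throughout the admissible $(r, s, t)$ range.

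Combining the three estimates yields
\[
\mathrm{Prob}(\tilde{\kappa}(P) \geq tM) \leq e^{-r^2 N} + e^{-c_2 s^2 m\log(ed)} + \frac{r^{m-n+3/2}\, s^{n-3/2}}{t^{m-n+3/2}},
\]
and the piecewise bound emerges from optimizing $r, s \geq 1$ as $t$ grows. In the smallest-$t$ window I take $r = s = 1$: each summand is then at most $t^{-(m-n+3/2)}$ and the total is $3t^{-(m-n+3/2)}$. Once $t$ crosses the threshold where $e^{-c_2 m\log(ed)}$ fails to beat $t^{-(m-n+3/2)}$, I raise $s$ to $s^2 \sim (m-n+3/2)\log t / (m\log(ed))$ so that the $\norm{P}_\infty$-term tracks the target rate; the polynomial price $s^{n-3/2}$ in the third summand then yields the first logarithmic correction. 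Once $t$ also exceeds the analogous threshold for the $\norm{P}_W$-term, I raise $r$ to $r^2 \sim (m-n+3/2)\log t / N$ and absorb the new polynomial price $r^{m-n+3/2}$, yielding the second logarithmic correction. Whether $N$ or $m\log(ed)$ saturates first is precisely the bifurcation between Case~(1) and Case~(2) of the theorem.

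The main obstacle is the technical bookkeeping: verifying Theorem~\ref{L-theorem}'s hypothesis in every subregime, checking that the $K, c_0, d, n, m, N$ factors consolidate into the exact exponents advertised for $M$, and confirming that the optimal $(r, s)$ in each $t$-window reproduce the precise piecewise expressions. Conceptually the argument is just a union bound on two failure events combined with the three concentration estimates already in hand, so no individual step presents a deep difficulty.
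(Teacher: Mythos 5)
Your proposal follows essentially the same route as the paper's proof: the same union bound splitting $\{\tilde{\kappa}(P)\geq tM\}$ into a large-$\norm{P}_W$ event and a small-$L(P)$ event, the same invocation of Lemma \ref{AG}, Lemma \ref{operatornorm}, and Theorem \ref{L-theorem} with $\alpha$ and $\gamma$ parametrized by two scales (your $r,s$ are the paper's $u,s$), and the same regime-by-regime optimization producing the three-term bound $e^{-r^2N}+e^{-c_2s^2m\log(ed)}+r^{m-n+\frac{3}{2}}s^{n-\frac{3}{2}}t^{-(m-n+\frac{3}{2})}$. The only detail worth making explicit in the write-up is the constraint $r\leq s$ (together with $Kc_0\geq\tfrac14$ and a large enough $C$), which is how the paper verifies the hypothesis $\alpha\leq\gamma\min\{d^{-6},d^2/n\}$ of Theorem \ref{L-theorem} in each regime.
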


\begin{proof}
Recall that 
$\tilde{\kappa}(P)\!=\!\frac{\|P\|_W}
{L(P)}$. Note that if $u\!>\!0$, and the inequalities 
$\|P\|_W\!\leq\!ucK\sqrt{N}$ and $L(P)\!\geq\!\frac{ucK\sqrt{N}}{tM}$ 
hold, then we clearly have $\tilde{\kappa}(P)\!\leq\!tM$. 
In particular, $u\!>\!0$ implies that 
$$ \mathrm{Prob} \left( \tilde{\kappa}(P) \geq t M  \right) \leq \mathrm{Prob} 
\left( \norm{P}_W \geq u c K \sqrt{N} \right) + \mathrm{Prob} 
\left(L(P) \leq \frac{ u c K \sqrt{N} }{t M}
\right).$$
\noindent 
Our proof will then reduce to optimizing $u$ over the 
various domains of $t$. 

Toward this end, note that Lemma \ref{AG} provides a large deviation estimate 
for the Weyl norm of our polynomial system. So, to bound $\mathrm{Prob} 
\left( \norm{P}_W \geq u c K \sqrt{N} \right)$ from above, we need to use 
Lemma \ref{AG} with the parameter $u$. As for the other summand in the 
upper bound for 
$\mathrm{Prob} \left( \tilde{\kappa}(P) \geq t M  \right)$,   
Theorem \ref{L-theorem} provides an upper bound for 
$ \mathrm{Prob}  \left(L(P) \leq \frac{ u c K \sqrt{N} }{t M} \right)$. 

However, the upper bound provided by Theorem \ref{L-theorem} involves the 
quantity $\mathrm{Prob} \left( \norm{P}_{\infty}   \geq \gamma \right)$. 
Therefore, in order to bound $ \mathrm{Prob}\!\left(L(P) \leq 
\frac{ u c K \sqrt{N} }{t M} \right)$, we will need to use Theorem 
\ref{L-theorem} 
together with Lemma \ref{operatornorm}. In particular, we will set 
$\alpha:=\frac{ ucK\sqrt{N}}{ t M}  $ and $\gamma:=3 s K\sqrt{m} \log (ed)$  
in Theorem \ref{L-theorem} and Lemma \ref{operatornorm},  
and then optimize the parameters $u$, $s$, and $t$ at the final step of the 
proof. 

Now let us check if the assumptions of Theorem \ref{L-theorem} are 
satisfied: We have that $s\geq 1$, $u\geq 1$, and (since $\alpha \leq 
\min\left\{ d^{-6}, \frac{d^{2}}{n}\right\}  \gamma$) we have 

$$  \frac{ ucK\sqrt{N}}{ t M} \leq   3 s K\sqrt{m} \log (ed)  \min \left\{ d^{-6}, 
\frac{d^{2}}{n}\right\}.  $$

\noindent So $\frac{ uc \sqrt{N} } { \sqrt{m} \log(ed) } \leq 3 s t M  
\min \left\{ d^{-6}, \frac{d^{2}}{n}\right\}$ 
and we thus obtain 

$$  (\ast) \ \ \ \ \ \ \ \ \ \ \ \ \ 
\frac{uc}{\log(ed)} \sqrt{\frac{N}{m}}  \leq  3st  \sqrt{\frac{N}{m}} (K c_0 C)^{\frac{m}{m-n+\frac{3}{2}}} (3d^2\log(ed))^{\frac{n-\frac{3}{2}}{m-n+\frac{3}{2}}}   n^{\frac{1}{2m-2n+3}}.   $$

\noindent Since $Kc_o \geq \frac{1}{4}$,  
the inequality ({\bf $\ast$}) holds if $u\!\leq\!s$, $t\!\geq\!1$, 
and we take the constant $C$ from Theorem \ref{L-theorem} to be at 
least $4$. Under the preceding restrictions we then have that 
$Q:= \mathrm{Prob} ( \tilde{\kappa} (p) \geq t M)$ implies 
$$ Q \leq   \left( \frac{ ucK\sqrt{N}}{ t M}  \right)^{\frac{3}{2}+m-n}  
\sqrt{n} (3 s K\sqrt{m} \log (ed) d^2)^{n-\frac{3}{2}}   \left(  \frac{C c_0 }
{ \sqrt{m}} \right)^{ m }  + e^{ - c_{2} s^{2} m \log(ed)} + e^{ - u^{2} N}$$ 
or, $ Q \leq  \frac{ u^{m-n+\frac{3}{2}} s^{n-\frac{3}{2}}}{t^{m-n
+\frac{3}{2}}} + e^{-c_{2} s^{2} m\log(ed)} 
+ e^{-u^{2} N}$ for some suitable $c_2\!>\!0$. 

We now consider the case where $ N \geq  m \log(ed)$. 
If $1\leq t \leq e^{ \frac{c_{1} m \log(ed)}{ m-n+\frac{3}{2}}}$ then we set  
$u\!=\!s\!=\!1$,\linebreak 
noting that ({\bf $\ast$}) is satisfied.  We then obtain\\ 
\mbox{}\hfill $Q \leq  \frac{1}{ t^{m-n+\frac{3}{2}} } 
+ e^{- c_{2} m\log(ed)} +e^{-N} \leq  \frac{3}{t^{m-n+\frac{3}{2}}}$, 
\hfill\mbox{}   

provided $c_2\!\geq\!1$. 
In the case where $e^{ \frac{ m \log(ed)}{m-n+\frac{3}{2}}} \leq t \leq 
e^{\frac{N}{m-n+\frac{3}{2}}} $ we choose $ u=1$ and $ s:= \sqrt{\frac{(m-n+\frac{3}{2})\log t}
{m\log(ed)}} \geq 1$. (Note that $u\leq s$). These choices then yield 
$$ Q\leq \frac{ 1 }{t^{m-n+\frac{3}{2}}} \left( \frac{(m-n+\frac{3}{2})\log t}{m\log(ed)}
\right)^{n-\frac{3}{2}} + \frac{1}{ t^{c_{2}(m-n+\frac{3}{2})}} + e^{ -N} \leq \frac{ 3}
{t^{m-n+\frac{3}{2}}} \left( \frac{(m-n+\frac{3}{2})\log t}{m\log(ed)}\right)^{\frac{n-\frac{3}{2}}
{2}}.$$

In the case where $e^{\frac{N}{m-n+\frac{3}{2}}} \leq t$, we choose $ s:=\sqrt{ \frac{ ({\log t }) (m-n+\frac{3}{2})}{m\log(ed)}} $ and $ u:= \sqrt{ \frac{(m-n+\frac{3}{2})\log t}{N}}$. 
(Note that $u\leq s$ also in this case). So we get 
$$ Q\leq \frac{ 1}{t^{m-n+\frac{3}{2}}} \left( \frac{(m-n+\frac{3}{2})\log t}{N}
\right)^{\frac{m}{2}}  \left( \frac{N}{m\log(ed)}\right)^{\frac{n-\frac{3}{2}}{2}} + 
\frac{1}{ t^{c_2(m-n+\frac{3}{2})}} + \frac{1}{t^{m-n+\frac{3}{2}}} $$
$$ \leq \frac{3  }{t^{m-n+\frac{3}{2}}} \left( \frac{(m-n+\frac{3}{2})\log t}{N}\right)^{\frac{m}{2}}  \left( \frac{N}{m\log(ed)}\right)^{\frac{n-\frac{3}{2}}{2}}.$$ 

We consider now the case where $ N \leq m \log(ed)$. When  
$1\leq t \leq e^{\frac{N}{ m-n+\frac{3}{2}}}$ we choose $s=1$ and $u=1$ to 
obtain $Q \leq   \frac{1}{ t^{m-n+\frac{3}{2}}} + e^{- c_{2} m\log(ed)}+e^{-N} 
\leq \frac{ 3 }{ t^{m-n+\frac{3}{2}}}$ as before. 
In the case $t\geq e^{\frac{N}{m-n+\frac{3}{2}}}$, we choose $ s=u:= 
\sqrt{\frac{(m-n+\frac{3}{2})\log t}{N}} $. Note that again ({\bf $\ast$}) is 
satisfied and, with these choices, we get 
$$  Q\leq \frac{ 1 }{t^{m-n+\frac{3}{2}}} \left( \frac{(m-n+\frac{3}{2})\log t}{N}
\right)^{\frac{m}{2}}  + \frac{1}{ t^{c_{2}(m-n+\frac{3}{2})m\log(ed)/N}}+ 
\frac{1}{t^{m-n+\frac{3}{2}}}$$  
$$\leq \frac{3 }{t^{m-n+\frac{3}{2}}} \left( \frac{  ({\log t })(m-n+\frac{3}{2})}{N}
\right)^{\frac{m}{2}}.$$  \end{proof} 

\begin{thm} \label{expectation}
Let $P$ be a random polynomial system as in Theorem \ref{gcondition}, let 
$d\!:=\!\max_j \deg p_j$, and let 
$M$ be as defined in Theorem \ref{gcondition}. Set 
$$ \delta_1 :=   
\frac{q\sqrt{\pi n}}{ m-n+\frac{3}{2}}  \left( \frac{n-\frac{3}{2}}{ 2em\log(ed)}\right)^{\frac{n-\frac{3}{2}}{2}} \frac{1}{ \left( 1- \frac{q}{m-n+\frac{3}{2}}\right)^{\frac{n}{2}}} \ \ \ \ 
\text{ and } $$
$$ \delta_2 := \left( \frac{m}{N}\right)^{ \frac{ m-n+\frac{3}{2}}{ 2}} 
\frac{q\sqrt{\pi m}e^{-\frac{m}{2}}}{
\left(m-n+\frac{3}{2}-q\right)
\left( 1- \frac{q}{ m-n+\frac{3}{2}}\right)^{\frac{m}{2}} 
(\log(ed))^{\frac{n}{2}-1}}. $$ 

\noindent We then have the following estimates:  
 \begin{enumerate}
\item If $ N \geq m \log(ed)$ and $q\!\in\!(0,m-n+\frac{3}{2})$  then 
$$ \left( \mathbb{E}({\tilde{\kappa}(P)}^{q})  \right)^{\frac{1}{ q}}  \leq {M} 
\left( 1+ \frac{q}{m-n-q+2}  + \delta_{1} + \delta_{2} \right)^{\frac{1}{q}}.$$ 
\noindent In particular, 
$q\!\in\!\left(0,(m-n+\frac{3}{2})\left(1-\frac{ 1}{ 2\log(ed)} \right)\right] 
\Longrightarrow \left( \mathbb{E}({\tilde{\kappa}(P)}^{q})
\right)^{\frac{1}{ q}}  \leq {M} 
\left( \frac{3m\log(ed)}{ n}\right) ^{\frac{1}{q}}$, and 
$q\!\in\!\left(0,\frac{m-n+\frac{3}{2}}{2}\right] \Longrightarrow 
\left( \mathbb{E}({\tilde{\kappa}(P)}^{q})\right)^{\frac{1}{ q}} 
\leq 4^{1/q}M$.\\  
Furthermore, $\mathbb{E}(\log{ \tilde{\kappa}(P)}) \leq 1+ \log M$. 
\item If $ N \leq m \log(ed)$, then 
$\left(\mathbb{E}({\tilde{\kappa}(P)}^{q})  
\right)^{\frac{1}{ q}}\leq M \left(1+ \frac{q}{ m-n-q+\frac{3}{2}} + \delta_{2}
\right)^{\frac{1}{q}}$.\\  
In particular, $q\!\in\!\left(0, (m-n+\frac{3}{2}) \left( 1- 
\frac{m}{eN}\right)\right]\Longrightarrow 
\left( \mathbb{E}({\tilde{\kappa}(P)}^{q})  
\right)^{\frac{1}{ q}} \leq M \left( \frac{3m\log(ed)}{ n}\right)^{\frac{1}
{ q}}$ and $q\in\left(0,\frac{m-n+\frac{3}{2}}{2}\right]\Longrightarrow 
\left( \mathbb{E}({\tilde{\kappa}(P)}^{q})  \right)^{\frac{1}{ q}}  \leq 
4^{1/q}M$.\\  
Furthermore, $\mathbb{E}(\log{ \tilde{\kappa}(P)}) \leq 1+ \log M$. 
\end{enumerate}
\end{thm}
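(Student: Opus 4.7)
The plan is to derive Theorem \ref{expectation} from the tail bounds in Theorem \ref{gcondition} via the layer-cake formula
$$\mathbb{E}(\tilde{\kappa}(P)^q) \;=\; q \int_0^\infty t^{q-1}\,\mathrm{Prob}(\tilde{\kappa}(P) \geq t)\,dt \;=\; M^q \cdot q \int_0^\infty s^{q-1}\,\mathrm{Prob}(\tilde{\kappa}(P) \geq sM)\,ds.$$
On $[0,1]$ I would apply the trivial bound $\mathrm{Prob}(\cdot) \leq 1$; this integrates to $1$ and produces the leading ``$1$'' inside the parentheses in both assertions. The remaining range $[1,\infty)$ is split according to the three regimes of Theorem \ref{gcondition} in Case 1, and the two regimes in Case 2.

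In the first nontrivial regime (power-law bound $3/t^{m-n+3/2}$), integrating $s^{q-1}\cdot 3 s^{-(m-n+3/2)}$ on the appropriate interval yields at worst $\frac{3q}{m-n-q+\frac{3}{2}}$, producing the $\tfrac{q}{m-n-q+2}$-style term. For the remaining regimes, whose bounds carry extra factors of $(c\log s)^{(n-3/2)/2}$ or $(c\log s)^{m/2}$, I would substitute $u = \log s$ to reduce the integrals to incomplete Gamma integrals of the form $\int_{u_0}^{\infty} u^{a}\, e^{-(m-n+\frac{3}{2}-q)u}\,du$. Using the standard estimate $\Gamma(a+1, x) \leq x^a e^{-x}$ (valid for $x$ larger than the mode) together with the lower endpoints $u_0 = \frac{m\log(ed)}{m-n+3/2}$ and $u_0 = \frac{N}{m-n+3/2}$ coming from the regime boundaries, these integrals produce exactly the quantities $\delta_1$ and $\delta_2$ as written in the statement. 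The simplified corollaries for $q \in (0,(m-n+\tfrac{3}{2})(1-\tfrac{1}{2\log(ed)})]$ and $q \in (0,\tfrac{m-n+3/2}{2}]$ then follow by plugging in these choices of $q$ and estimating the resulting $\delta_i$ crudely.

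For the logarithmic bound $\mathbb{E}(\log \tilde{\kappa}(P)) \leq 1 + \log M$, I would use the elementary inequality $\log x \leq \frac{x^q}{eq}$, valid for every $x \geq 1$ and $q > 0$. Writing $\log\tilde{\kappa}(P) = \log M + \log(\tilde{\kappa}(P)/M)$ and bounding the positive part of $\log(\tilde{\kappa}(P)/M)$ using this inequality gives
$$\mathbb{E}(\log \tilde{\kappa}(P)) \;\leq\; \log M + \frac{\mathbb{E}((\tilde{\kappa}(P)/M)^q)}{eq}.$$
Choosing $q = (m-n+\tfrac{3}{2})/2$ and invoking the just-proved moment bound $\mathbb{E}(\tilde{\kappa}(P)^q) \leq 4 M^q$ yields $\mathbb{E}(\log(\tilde{\kappa}(P)/M)^{+}) \leq 4/(eq) \leq 1$, and hence the claim, provided $m-n+\tfrac{3}{2}$ is not too small (otherwise one can take $q=1$ and use the first-moment bound directly).

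The main obstacle will be the bookkeeping needed to identify the precise constants and powers appearing in $\delta_1$ and $\delta_2$: each tail regime contributes its own incomplete-Gamma integral, and one must verify that the boundary terms at the junctions $t = e^{m\log(ed)/(m-n+3/2)}$ and $t = e^{N/(m-n+3/2)}$ telescope correctly rather than accumulating. The Case 1 vs.\ Case 2 split reduces to whether $N \geq m\log(ed)$ or not, which dictates whether $\delta_1$ is present. Once these integrals are computed carefully, the rest is algebra.
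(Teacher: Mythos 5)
Your strategy for the moment bounds coincides with the paper's: the layer-cake formula, the trivial bound $\mathrm{Prob}\leq 1$ on $[0,M]$ producing the leading $1$, a split of $[M,\infty)$ according to the regimes of Theorem \ref{gcondition}, and the substitution $u=\log s$ reducing the logarithmically-corrected regimes to incomplete Gamma integrals. The paper carries this out with the shorthands $r:=m-n-q+\frac{5}{2}$, $a_1:=\frac{m\log(ed)}{m-n+\frac{3}{2}}$, $a_2:=\frac{N}{m-n+\frac{3}{2}}$, bounding the three pieces by $\frac{q}{r-1}$, $\delta_1$, and $\delta_2$ exactly as you outline, so that part of your plan is sound modulo the bookkeeping you already flag.

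The one substantive issue is the final claim $\mathbb{E}(\log\tilde{\kappa}(P))\leq 1+\log M$ (which the paper asserts without supplying an argument). Your inequality $\log x\leq \frac{x^q}{eq}$ is correct, and it yields
$$\mathbb{E}\bigl(\log\tilde{\kappa}(P)\bigr)\;\leq\;\log M+\frac{\mathbb{E}\bigl((\tilde{\kappa}(P)/M)^q\bigr)}{eq}\;\leq\;\log M+\frac{4}{eq}$$
for $q\leq\frac{1}{2}\left(m-n+\frac{3}{2}\right)$. But $\frac{4}{eq}\leq 1$ forces $q\geq\frac{4}{e}\approx 1.47$, hence $m-n+\frac{3}{2}\gtrsim 2.94$. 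In the square case $m=n-1$ --- precisely the case needed for Corollaries \ref{cor:main1} and \ref{square} --- the admissible range is $q\leq\frac{1}{4}$, so $\frac{4}{eq}\geq\frac{16}{e}>1$, and your fallback of ``take $q=1$ and use the first moment'' is unavailable because every moment bound in the theorem requires $q<m-n+\frac{3}{2}=\frac{1}{2}$. So this last step does not close as written; to recover the stated constant one would have to integrate the tail bound for $\log\tilde{\kappa}(P)$ directly (i.e., bound $\int_0^\infty \mathrm{Prob}(\tilde{\kappa}(P)\geq e^uM)\,du$) or otherwise sharpen the argument, rather than pass through a $q$-th moment.
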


\begin{proof}
\noindent Set \ \ $\Lambda_{1}:= \left( \frac{ m-n+\frac{3}{2}}{ m\log{ed}}
\right)^{\frac{n-\frac{3}{2}}{2}}$, \ \ $\Lambda_{2} :=  
\left( \frac{ m-n+\frac{3}{2}}{ N}\right)^{\frac{m}{2}} 
\left( \frac{N}{ m\log{ed}}\right)^{\frac{n-\frac{3}{2}}{2}}$,\\ 
$$ r:= m-n-q+\frac{5}{2}, \ a_{1} := \frac{m\log{ed}}{ m-n+\frac{3}{2}} , \text{ and } 
a_{2} := \frac{N}{ m-n+\frac{3}{2}} . $$ 
\noindent Note that we have $r\geq 1$ by construction. Using Theorem 
\ref{gcondition} and the formula  
$$\mathbb{E}((\tilde{\kappa}(P))^q) = q \int_0^{\infty} t^{q-1}\mathrm{Prob} 
\left(\tilde{\kappa}(p) \geq t \right) d t $$ 
(which follows from the definition of expectation), we have that \\ 
\mbox{}\hfill 
$\displaystyle{\mathbb{E}((\tilde{\kappa}(P))^q)  \leq M^{q} \left( 1 + 
q\int_{1}^{\infty} t^{q-1} \mathrm{Prob} \left(\tilde{\kappa}(p) \geq t  
M\right) d t \right)}$,\hfill\mbox{}\\  
or \ $\displaystyle{\frac{  \mathbb{E}((\tilde{\kappa}(P))^q)}{ M^{q}} \leq  1+ 
q\int_{1}^{ e^{ a_{1}} } \frac{1}{ t^{r} }  dt + q\Lambda_{1}
\int_{e^{a_{1}}}^{ e^{ a_{2}}} \frac{ (\log t )^{ \frac{n-\frac{3}{2}}{2}}}
{t^{r} } dt + q\Lambda_{2}\int_{e^{a_{2}}}^{\infty} 
\frac{(\log t)^{\frac{m}{2}}}
{ t^{r} } d t}$. We will give upper bounds for the last three integrals. First 
note that
$$ q\int_{1}^{ e^{ a_{1}} } \frac{1}{ t^{r} }  dt = \frac{q}{ r-1} 
\left(1-e^{(r-1) a_{1}}\right) \leq  \frac{q}{ r-1} . $$
Also, we have that
$$ q\Lambda_{1}\int_{e^{a_{1}}}^{ e^{ a_{2}}} 
\frac{(\log t)^{\frac{n-\frac{3}{2}}{2}}}{ t^{r} } d t  
 = q\Lambda_{1}\int_{a_{1}}^{a_{2}} t^{ \frac{n-\frac{3}{2}}{2}} e^{ (r-1) t} 
  d t = \frac{q \Lambda_{1}} { (r-1)^{\frac{n}{2}}} 
   \int_{a_{1}(r-1)}^{ a_{2}(r-1)} t^{ \frac{n-\frac{3}{2}}{2}} e^{-t} d t  $$
$$ \leq \frac{q \Lambda_{1}}{(r-1)^{\frac{n}{2}-\frac{1}{4}}} 
    \Gamma\left(\frac{n}{2}-\frac{1}{4}\right) 
  \leq \frac{q\sqrt{\pi n}}{ m-n+\frac{3}{2}}  
   \left( \frac{n-\frac{3}{2}}{ 2em\log(ed)}
\right)^{\frac{n}{2}-\frac{3}{4}} \frac{1}{ \left( 1- \frac{q}{m-n+\frac{3}{2}}
\right)^{\frac{n}{2}-\frac{1}{4}}}.$$ 
Finally, we check that 
$$ q\Lambda_{2}\int_{e^{a_{2}}}^{\infty} \frac{ (\log t )^{ \frac{m}{2}}}{t^r} 
d t  = q\Lambda_{2}\int_{a_{2}}^{\infty} t^{ \frac{m}{2}} e^{ (r-1) t} d t = 
\frac{q \Lambda_{2}} { (r-1)^{\frac{m}{2}+1}} \int_{a_{2}(r-1)}^{\infty} 
t^{\frac{m}{2}} e^{-t} d t  $$ 
$$ \leq \frac{q \Lambda_{2}} { (r-1)^{\frac{m}{2}+1}} 
 \Gamma\left(\frac{m}{2}+1\right) \leq \frac{q\sqrt{\pi m}}
 {(m-n-q+\frac{3}{2})^{\frac{m}{2}+1}} 
 \left( \frac{ m(m-n+\frac{3}{2})}{e N}\right)^{\frac{m}{2}} 
 \left( \frac{N}{ m\log{ed}}\right)^{\frac{n}{2}-\frac{3}{4}} $$
$$ =\left( \frac{m}{N}\right)^{\frac{m}{2}-\frac{n}{2}+\frac{3}{4}} 
 \frac{1}{\left(1- \frac{q}{ m-n+\frac{3}{2}}\right)^{\frac{m}{2}}} 
 \cdot \frac{q e^{-m/2} \sqrt{\pi m}}{ m-n+\frac{3}{2} -q} \cdot 
 \frac{1}{(\log(ed))^{\frac{n}{2}-\frac{3}{4}}}  . $$ 
Note that if $ q\leq  (m-n+\frac{3}{2}) \left( 1- \frac{ 1}{ 2\log(ed)}\right)$ 
then $\delta_{1} , \delta_{2} \leq 1$. 

For the case  $N \leq m \log(ed)$, working as before, we get that 
$$ \frac{  \mathbb{E}((\tilde{\kappa}(P))^q)}{ M^{q}} \leq  1+ q\int_{1}^{ e^{ a_{2}} } \frac{1}{ t^{r} }  dt +  q \Lambda_2 \int_{e^{a_{2}}}^{\infty} \frac{ (\log t )^{ \frac{m}{2}}}{ t^{r} } d t  \leq 1+ \frac{q}{ r-1} + \delta_{2}. $$ 
In the case $ N\leq m\log(ed)$ we have $\delta_{2} \leq \frac{ \sqrt{\pi m}q}
{ m-n+\frac{3}{2}} \left( \frac{m}{e N}\right)^{\frac{m}{2}} 
\frac{1}{\left(1-\frac{q}{ m-n+\frac{3}{2}}\right)^{\frac{m}{2}+1}}$. In 
particular, for this case, it easily follows that $q\leq (m-n+\frac{3}{2}) 
\left(1-\frac{m}{N}\right)$ implies $\delta_{2} \leq 1$.  
\end{proof}

Note that if $m\!=\!n-1$, $n\!\geq\!3$, and $d\!\geq\!2$, then 
$N\!\geq\!m\log(ed)$ and, in this case, 
it is easy to check that $(\ast)$ still holds even if we reduce $M$ by 
deleting its factor of $\max\!\left\{d^6,\frac{n}{d^2}\right\}$. So then, for 
the important case $m=n-1$, our main theorems immediately 
admit the following refined form:  
\begin{cor} \label{square} There are universal constants $A,c\!>\!0$ 
such that if $P$ is any random polynomial system as in Theorem 
\ref{gcondition}, but with $m=n-1$, $n \geq 3$, $d:=\max_j \deg p_j$, 
$d \geq 2$, and $M:=\sqrt{N} (K c_0 C)^{2(n-1)} 
(3d^2\log(ed))^{2n-3} \sqrt{n}$ instead, then we have:\\  
\mbox{}\hfill $\mathrm{Prob}(\tilde{\kappa}(P) \geq t M) \leq  \begin{cases}
  3 t^{-\frac{1}{2}} &\mbox{if } 1\leq t\leq e^{ 2(n-1)\log{(ed)}} \\
  3 t^{-\frac{1}{2}} \left( \frac{{\log{t}}}{2 (n-1)\log{(ed)}}\right)^{\frac{n-\frac{3}{2}}{2}}  &\mbox{if } e^{ 2 (n-1) \log{(ed)} } \leq t \leq e^{2N} \\
  3 t^{-\frac{1}{2}} \left( \frac{{\log{t}}}{2N} \right)^{\frac{1}{4}}  
\left( \frac{{\log{t}}}{2(n-1)\log{(ed)}} \right)^{\frac{n-\frac{3}{2}}{2}}  &\mbox{if }  
e^{2N} \leq t  \end{cases}$,\hfill\mbox{}\\ 
and, for all $q\!\in\!\left(0, \frac{1}{2}- \frac{1}{ 4\log{(ed)}}\right]$, we have 
$\left( \mathbb{E}({\tilde{\kappa}(P)}^{q})  \right)^{\frac{1}{ q}}\leq M 
e^{\frac{1}{ q}}$. \\ Furthermore, $\mathbb{E}(\log \tilde{\kappa}(P)) \leq 
1 + \log M$. \qed 
\end{cor}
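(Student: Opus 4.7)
The strategy is to specialize Theorem \ref{gcondition} and Theorem \ref{expectation} to the square case $m=n-1$, where the exponent $m-n+\tfrac{3}{2}$ collapses to $\tfrac{1}{2}$, and to observe that two small simplifications in the general proof become available under the extra hypotheses $n\geq 3$ and $d\geq 2$.

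First, we verify $N \geq m\log(ed)$, which places us in Case (1) of Theorem \ref{gcondition}. This follows from a short case analysis: each $N_j = \binom{n+d_j-1}{d_j} \geq \binom{n+1}{2} = \tfrac{n(n+1)}{2}$, so $N \geq (n-1)\tfrac{n(n+1)}{2}$, which already exceeds $(n-1)\log(ed)$ for moderate $d$; for very large $d$ one instead uses $N \geq \binom{n+d-1}{d} \geq (d/(n-1))^{n-1}$, whose polynomial growth in $d$ dominates $(n-1)\log(ed)$.

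Second, we revisit inequality $(\ast)$ from the proof of Theorem \ref{gcondition} with $M$ replaced by the reduced quantity $M_{\mathrm{cor}}$ of the corollary (i.e.\ without the factor $\max\{d^{6}, n/d^{2}\}$). Written out, $(\ast)$ requires
$$ \frac{uc}{\log(ed)}\sqrt{\tfrac{N}{n-1}} \;\leq\; 3st\,M_{\mathrm{cor}}\,\min\bigl\{d^{-6},\,d^{2}/n\bigr\}. $$
The factor $(3d^{2}\log(ed))^{2n-3}$ hidden in $M_{\mathrm{cor}}$ already contains $d^{4n-6}$, which for $n\geq 3$ more than absorbs $\min\{d^{-6},d^{2}/n\}$; combined with $Kc_{0}\geq \tfrac{1}{4}$ (which yields $(Kc_{0}C)^{2(n-1)} \geq 1$ once $C \geq 4$), the inequality holds uniformly over $u \leq s$, $t \geq 1$, $n \geq 3$, and $d \geq 2$.

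With those checks in hand, the probability estimate follows by substituting $m=n-1$ into the three-piece tail bound of Theorem \ref{gcondition}(1): the thresholds become $e^{2(n-1)\log(ed)}$ and $e^{2N}$, and the first two cases match the corollary verbatim. For the third case ($t \geq e^{2N}$) the output is rewritten using the identity
$$ \left(\tfrac{\log t}{2N}\right)^{(n-1)/2}\left(\tfrac{N}{(n-1)\log(ed)}\right)^{(n-3/2)/2} = \left(\tfrac{\log t}{2N}\right)^{1/4}\left(\tfrac{\log t}{2(n-1)\log(ed)}\right)^{(n-3/2)/2}, $$
valid because $(n-1)/2 - 1/4 = (n-3/2)/2$. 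The moment bound then follows from Theorem \ref{expectation}(1) applied with $M_{\mathrm{cor}}$: on the range $q \in \bigl(0,\tfrac{1}{2} - \tfrac{1}{4\log(ed)}\bigr]$ one tracks $\delta_{1}, \delta_{2}$ and $q/(m-n-q+2) = q/(1-q)$ explicitly in the $m=n-1$ regime to deduce $(1 + q/(1-q) + \delta_{1} + \delta_{2})^{1/q} \leq e^{1/q}$, while the logarithmic bound $\mathbb{E}(\log \tilde{\kappa}(P)) \leq 1 + \log M$ is inherited directly from the last clause of Theorem \ref{expectation}(1). The main obstacle is the bookkeeping in verifying $(\ast)$ with the reduced $M$; every other step is routine specialization.
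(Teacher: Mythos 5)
Your proposal follows essentially the same route as the paper, which derives the corollary exactly as you do: verify $N\geq m\log(ed)$ under the hypotheses $m=n-1$, $n\geq 3$, $d\geq 2$, check that inequality $(\ast)$ survives deletion of the factor $\max\{d^6,n/d^2\}$ from $M$ (the paper states this without detail; your observation that $(3d^2\log(ed))^{2n-3}$ supplies at least $d^6$ for $n\geq 3$ is the right justification), and then specialize the exponent $m-n+\tfrac32$ to $\tfrac12$, with the third tail regime rewritten via the same algebraic identity $\tfrac{n-1}{2}-\tfrac14=\tfrac{n-3/2}{2}$. Your treatment is, if anything, more explicit than the paper's one-line remark preceding the corollary.
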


We are now ready to prove Corollary \ref{cor:main1} from the introduction. 

\medskip 
\noindent 
\scalebox{.95}[1]{{\bf Proof of Corollary \ref{cor:main1}:} From Corollary 
\ref{square}, Bound (2) follows immediately, and Bound (1)}\linebreak 
is clearly true for the smaller domain of $t$. So let us now 
consider $t=xe^{2(n-1)\log(ed)}$ with \mbox{$x \geq 1$}.\linebreak 
\scalebox{.91}[1]{Clearly, 
\mbox{$\left( \frac{\log t }{2(n-1)\log(ed)} \right)^{\frac{n}{2}-\frac{3}{4}} 
= \left(1 + \frac{\log x}{2(n-1)\log(ed)} \right)^{\frac{n}{2}-\frac{3}{4}}$}, 
and thus $\left( \frac{\log t}{2(n-1)\log(ed)} 
\right)^{\frac{n}{2}-\frac{3}{4}}  <  
e^{\frac{\log x}{4\log(ed)}}=x^{\frac{1}{4\log(ed)}}$.} 
Since $x= \frac{t}{e^{2(n-1)\log(ed)}}$ we thus obtain  
$3t^{-\frac{1}{2}} \left( \frac{\log t}{2(n-1)\log(ed)} 
\right)^{\frac{n-\frac{3}{2}}{2}} \leq 3t^{-\frac{1}{2}} 
\left( \frac{t}{e^{2(n-1)\log(ed)}} \right)^{\frac{1}{4\log(ed)}}$. 
Renormalizing the pair $(M,t)$ (since the $M$ from Corollary \ref{square} 
is larger than the $M$ from Corollary \ref{cor:main1} by a factor of 
$A$), we are done. \qed

\subsection{On the Optimality of  Condition Number Estimates} 
\label{sub:lower} 
As mentioned in the introduction, to establish a lower bound we need one more 
assumption on the randomness. For the convenience of the reader, we 
recall our earlier Euclidean Small Ball assumption. 

\medskip
\noindent 
(\textbf{Euclidean Small Ball}) {\em There is a constant $\tilde{c}_0>0$ such that for 
each $j\!\in\!\{1,\ldots,m\}$ and $\eps\!>\!0$ we have  
$\mathrm{Prob}\left( \| C_j\|_{2} \leq \varepsilon \sqrt{N_{j}} \right)  
\leq ( \tilde{c_0} \varepsilon)^{N_{j}}$.}  

\medskip
\noindent We will need an extension of Lemma \ref{smallBallTensorize}: Lemma 
\ref{Aniso-p} below (see also \cite[Thm.\ 1.5 \& Cor.\ 8.6]{RV-1}). 
Toward this end, for any matrix $T:= (t_{i,j})_{1\leq i, j \leq m}$, write 
$\|T\|_{HS}$ for the {\em Hilbert-Schmidt norm} of $T$ and $ \|T\|_{op}$ for 
the {\em operator norm} of $T$, i.e., 
$$ \|T \|_{HS} := \left( \sum_{i, j=1}^{m} t_{i,j}^{2} \right)^{\frac{1}{ 2}}  
\ \text{  and  } \ \|T\|_{op}:= \max_{\theta \in S^{n-1}} \| T \theta \|_{2}.$$

\begin{lem}\label{Aniso-p}
\noindent Let $\xi_1,\ldots,\xi_m$ be independent random variables 
satisfying 
$\mathrm{Prob} \left ( \xi_{i} \leq \varepsilon \right) \leq c_0 \varepsilon$ 
for all $i\!\in\!\{1,\ldots,m\}$ and $\eps\!>\!0$. 
Let $ \xi := (\xi_{1}, \ldots, \xi_{m})$. Then there is a constant 
$c\!>\!0$ such that for any $m\times m$ matrix $T$ and $\eps\!>\!0$ 
we have $\mathrm{Prob} \left( \| T \xi \|_{2} \leq \varepsilon \| T\|_{HS}
\right) \leq \left( c c_0 \varepsilon\right)^{c \frac{ \| T\|_{HS}^{2}}
{\|T\|_{op}^{2}}}$. \qed 
\end{lem}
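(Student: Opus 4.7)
The plan is to use the Esseen-type Laplace transform method combined with the singular value decomposition of $T$; this is essentially the argument of Rudelson-Vershynin, and the crucial feature to recover is the exponent $c\|T\|_{HS}^{2}/\|T\|_{op}^{2}$ (the \emph{stable rank} of $T$) rather than a naive exponent of $\mathrm{rank}(T)$ or $m$.

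First, I would diagonalize $T=U\Sigma V^{\top}$ with singular values $\sigma_{1}\geq\cdots\geq\sigma_{m}\geq 0$ and orthonormal right singular vectors $v_{1},\ldots,v_{m}$, so that $\|T\xi\|_{2}^{2}=\sum_{i}\sigma_{i}^{2}\langle v_{i},\xi\rangle^{2}$, $\|T\|_{HS}^{2}=\sum_{i}\sigma_{i}^{2}$, and $\|T\|_{op}=\sigma_{1}$. Then via the Chernoff--Laplace inequality
\[
\mathrm{Prob}(\|T\xi\|_{2}\leq\varepsilon\|T\|_{HS})\leq e^{\lambda\varepsilon^{2}\|T\|_{HS}^{2}}\,\mathbb{E}\bigl[e^{-\lambda\|T\xi\|_{2}^{2}}\bigr],
\]
the problem reduces to estimating the Laplace transform of $\|T\xi\|_{2}^{2}$ from above and optimizing $\lambda>0$ at the end.

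Next I would use the Gaussian identity $e^{-\lambda y^{2}}=(4\pi\lambda)^{-1/2}\int e^{-u^{2}/(4\lambda)}e^{iuy}du$ coordinate-wise on $T\xi$, exchange expectations with Fubini, and exploit the independence of the $\xi_{j}$ to rewrite the expectation as a Gaussian-weighted integral of $\prod_{j=1}^{m}\phi_{j}((T^{\top}u)_{j})$, where $\phi_{j}$ is the characteristic function of $\xi_{j}$. The one-sided small ball assumption $\mathrm{Prob}(\xi_{j}\leq\varepsilon)\leq c_{0}\varepsilon$ forces the distribution of $\xi_{j}$ to have diffuse mass on a scale $\gtrsim 1/c_{0}$; a standard argument (splitting the Fourier integral near and away from the origin, and comparing $1-\cos$ against probabilities of short intervals) then yields a pointwise estimate of the form $|\phi_{j}(t)|\leq\exp(-c\min(t^{2}/c_{0}^{2},1))$.

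Substituting this bound and changing variables in the Fourier integral into the orthonormal basis $\{v_{i}\}$, the resulting $m$-dimensional Gaussian integral decouples into $m$ one-dimensional integrals, with the $i$-th one contributing a factor of order $\exp(-c\,\sigma_{i}^{2}/\sigma_{1}^{2})$ once $\lambda$ is chosen of order $1/(c_{0}\|T\|_{op})^{2}$. Summing these contributions produces exactly the stable-rank exponent, and balancing the prefactor $e^{\lambda\varepsilon^{2}\|T\|_{HS}^{2}}$ against the exponential gain gives the advertised bound $(c c_{0}\varepsilon)^{c\|T\|_{HS}^{2}/\|T\|_{op}^{2}}$. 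The main obstacle is precisely this ``accumulation'' step: one must verify that the Gaussian weight in the Fourier integral really does activate each singular direction with weight $\sigma_{i}^{2}/\sigma_{1}^{2}$, rather than producing $m$ decoupled small-ball factors that would only give an exponent of rank. Working in the singular-vector basis and picking $\lambda$ exactly at the spectral edge $1/\|T\|_{op}^{2}$ is what forces this to go through cleanly.
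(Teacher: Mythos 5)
The paper itself offers no proof of Lemma \ref{Aniso-p}: it is quoted directly from Rudelson--Vershynin \cite{RV-1} (Thm.\ 1.5 and Cor.\ 8.6). Your Esseen/Laplace-transform outline is therefore a sketch of the cited source's argument rather than an alternative to anything actually written in the paper, and the roadmap (Chernoff--Laplace bound, Gaussian--Fourier identity, characteristic-function decay, stable-rank exponent) is indeed the right one for that result.

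There is, however, a genuine gap at the characteristic-function step, and it is exactly where the form of the hypothesis matters. The stated assumption $\mathrm{Prob}(\xi_j\le\eps)\le c_0\eps$ is anti-concentration at the single point $0$; it does not imply $|\phi_j(t)|\le\exp(-c\min(t^2/c_0^2,1))$. The standard derivation of such decay passes through the symmetrization $\tilde\xi_j=\xi_j-\xi_j'$ via $1-|\phi_j(t)|^2=\mathbb{E}[1-\cos(t\tilde\xi_j)]$, and this requires $\tilde\xi_j$ to charge the complement of a small interval around $0$, i.e.\ anti-concentration of $\xi_j$ around \emph{every} point (a bound on the L\'evy concentration function $\sup_u\mathrm{Prob}(|\xi_j-u|\le\eps)$), which is what \cite{RV-1} actually assumes. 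The literal one-sided hypothesis admits, e.g., $\xi_j\equiv 1/c_0$ deterministically, for which $|\phi_j|\equiv 1$; taking $T$ to be the orthogonal projection onto $\mathbf{1}^\perp$ then gives $\|T\xi\|_2=0$ almost surely while $\|T\|_{HS}^2/\|T\|_{op}^2=m-1$, so under that reading the conclusion itself fails and no proof can close the gap --- the lemma must be read with the L\'evy-function hypothesis. (In the paper's actual application, Lemma \ref{lower-bound}, the $\xi_j$ are nonnegative and $T$ is diagonal with positive entries; there the one-sided condition does suffice, but by direct tensorization of $\mathbb{E}\,e^{-s\xi_j^2}\lesssim c_0/\sqrt{s}$ as in Lemma \ref{smallBallTensorize}, with no Fourier analysis at all.) A secondary concern: the product $\prod_j\phi_j((T^\top u)_j)$ is indexed by the standard coordinates, where independence lives, so rotating $u$ into the singular basis does not decouple the Gaussian integral into $m$ one-dimensional factors; extracting the stable-rank exponent in \cite{RV-1} is genuinely more delicate than your sketch suggests, as you partly acknowledge.
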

 
Our main lower bound for the condition number is then the following: 
\begin{lem}
\label{lower-bound}
\noindent Let $ P=(p_1, \ldots , p_m)$ be a homogeneous $n$-variate polynomial 
system with $d_j\!=\!\mathrm{deg}\; p_j$ for all $j$. Then $\tilde{\kappa}(P) 
\geq \frac{\|P\|_W}{\|P \|_{\infty}\sqrt{m+1}}$. 
Moreover, if $P\!:=\!(p_1, \ldots , p_m)$ is a random polynomial system 
satisfying our Sub-Gaussian and Euclidean Small Ball assumptions,  
with respective underlying constants $K$ and $\tilde{c_0}$, then we have 
$$ \mathrm{Prob} \left ( \tilde{\kappa}(P) \leq \varepsilon 
\frac{\sqrt{N}}{K md \log(ed)}  \right) \leq (c \tilde{c_0}
\varepsilon)^{ c^{\prime}
\min\left\{ N \frac{ \min_{j} N_{j}}{ \max_j N_{j}} , m 
d\log(ed)\right\} } \ \ \ \ {\rm and} $$
$$ \mathrm{Prob} \left ( \tilde{\kappa}(P) \leq \varepsilon \frac{\sqrt{N}}
{Km \log(ed)}  \right) \leq (c \tilde{c_0}\varepsilon)^{ c^{\prime} m 
\log(ed) } , \text{ if } d_j\!=\!d \text{ for all } j\!\in\!\{1,\ldots,
m\},$$
where $ c, c^{\prime}>0$ are absolute constants. In particular when $d=d_j$ 
for all $j\!\in\!\{1,\ldots,m\}$, we have 
${\mathbb{E}( \tilde{\kappa} (P) )} \geq c \frac{\sqrt{N}}{ m \log(ed)}$.   
\end{lem}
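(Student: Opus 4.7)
The plan is to establish a deterministic lower bound on $\tilde\kappa(P)$ and then feed in probabilistic control of $\|P\|_W$ (from below) and $\|P\|_\infty$ (from above) to recover the stated estimates. Specifically, I would first prove the deterministic bound $\tilde\kappa(P)\ge \|P\|_W/(\|P\|_\infty\sqrt{m+1})$ by choosing $x^*\in S^{n-1}$ with $\|P(x^*)\|_2=\|P\|_\infty$. At such a point, Lagrange multipliers for $\|P(x)\|_2^2$ on the sphere give $DP(x^*)^\top P(x^*)\parallel x^*$, equivalently $(\Delta_m P(x^*))^\top M(x^*)=0$, where $M(x^*):=\Delta_m^{-1}DP(x^*)|_{T_{x^*}S^{n-1}}$. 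So the range of $M(x^*)$ is trapped in the hyperplane $(\Delta_m P(x^*))^\perp\subset\mathbb{R}^m$. Combining this one-dimensional cokernel constraint with the Hilbert-Schmidt bound $\sigma_{\min}(M(x^*))^2\le\|M(x^*)\|_{HS}^2/(n-1)$ and the Kellogg estimate from Theorem~\ref{Kellogsys} on $\|DP(x^*)(y)\|_2$ for $y\perp x^*$, one extracts $\sigma_{\min}(M(x^*))^2\le m\|P\|_\infty^2$, and hence $L(P,x^*)^2\le(m+1)\|P\|_\infty^2$, proving the deterministic bound.

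Next I would lower bound $\|P\|_W$ via the Euclidean Small Ball assumption. Writing $\|P\|_W^2=\sum_{j=1}^m\|C_j\|_2^2$ and introducing the normalized variables $\xi_j:=\|C_j\|_2/\sqrt{N_j}$ and the diagonal matrix $T=\mathrm{diag}(\sqrt{N_j})$, one has $\|P\|_W=\|T\xi\|_2$, $\|T\|_{HS}^2=N$, and $\|T\|_{op}^2=\max_j N_j$. The Euclidean Small Ball hypothesis gives $\mathrm{Prob}(\xi_j\le\varepsilon)\le(\tilde c_0\varepsilon)^{N_j}\le\tilde c_0\varepsilon$, so Lemma~\ref{Aniso-p} yields
\[
\mathrm{Prob}\!\left(\|P\|_W\le\varepsilon\sqrt N\right)\le \left(c\tilde c_0\varepsilon\right)^{cN\cdot \min_j N_j/\max_j N_j}.
\]
When the degrees are equal, $N_j=N/m$ for all $j$ and this ratio simplifies, producing the cleaner $(c\tilde c_0\varepsilon)^{c'm\log(ed)}$-type bound after comparing $N/m$ with $m\log(ed)$.

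For the upper bound on $\|P\|_\infty$, I invoke Lemma~\ref{operatornorm} with $s=1$, which, using Sub-Gaussianity of the $C_j$ and the Kellogg-type bounds of the previous section, gives $\|P\|_\infty\le c'K\sqrt m\log(ed)$ (uniform degrees) or $c'K\sqrt m\,d\log(ed)$ (mixed degrees) with failure probability at most $e^{-c''m\log(ed)}$ or $e^{-c''md\log(ed)}$, respectively. A union bound with the $\|P\|_W$-estimate, substituted into the deterministic inequality $\tilde\kappa(P)\ge\|P\|_W/(\|P\|_\infty\sqrt{m+1})$, produces the two displayed probability bounds after absorbing universal constants into $c,c'$. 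Finally, the expectation lower bound comes from the trivial tail integration $\mathbb{E}\tilde\kappa(P)\ge t\cdot\mathrm{Prob}(\tilde\kappa(P)\ge t)$, picking $t=c\sqrt N/(m\log(ed))$ with a small absolute $\varepsilon$ so that the probability is an absolute constant.

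The main obstacle is the deterministic step, specifically making the passage from the one-codimensional cokernel constraint at $x^*$ to the bound $\sigma_{\min}(M(x^*))^2\le m\|P\|_\infty^2$ work uniformly in the over-determined regime $m\ge n$: here the cokernel containment alone does not force $\sigma_{\min}$ to vanish (unlike the square case $m=n-1$, where rank deficiency is immediate), and one must combine it carefully with Theorem~\ref{Kellogsys} to control the $n-1$ tangential singular values together. The probabilistic steps, by contrast, are essentially a routine combination of Euclidean Small Ball tensorization (Lemma~\ref{Aniso-p}) with the operator-norm estimates already developed in Section~\ref{CC}.
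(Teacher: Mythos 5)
Your overall architecture matches the paper's: a deterministic bound $\tilde\kappa(P)\ge\|P\|_W/(\sqrt{m+1}\,\|P\|_\infty)$ via Kellogg, a small-ball lower bound on $\|P\|_W$ via Lemma~\ref{Aniso-p} applied to $\xi_j:=\|C_j\|_2/\sqrt{N_j}$ and $T=\mathrm{diag}(\sqrt{N_1},\ldots,\sqrt{N_m})$, an upper bound on $\|P\|_\infty$ via Lemma~\ref{operatornorm}, a union bound, and integration for the expectation. However, the step you single out as the ``main obstacle'' is not an obstacle, and your route to it is an unnecessary detour. You do not need to locate a maximizer $x^*$ of $\|P(\cdot)\|_2$, nor the Lagrange-multiplier cokernel constraint, nor the Hilbert--Schmidt averaging $\sigma_{\min}^2\le\|M(x^*)\|_{HS}^2/(n-1)$. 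Since $\sigma_{\min}\bigl(\Delta_m^{-1}DP(x)|_{T_xS^{n-1}}\bigr)$ is the infimum of $\|\Delta_m^{-1}DP(x)(y)\|_2$ over unit $y\perp x$, it is bounded above by the value at any single tangent direction, and the second part of Assertion (1) of Theorem~\ref{Kellogsys} (Kellogg in an orthogonal direction) bounds that value by $\sqrt{m}\,\|P\|_\infty$ for every orthonormal pair $(x,y)$ after the degree normalization. Hence $\cL(x,y)^2\le(m+1)\|P\|_\infty^2$ for all $x\perp y$, which is exactly the paper's one-line argument; the over-determined regime causes no difficulty because you never need $\sigma_{\min}$ to vanish, only to be dominated by the image of one direction. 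As written, your proposal leaves this step open while the correct argument is immediate.

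The one genuine quantitative gap is in the combination step: you invoke Lemma~\ref{operatornorm} with $s=1$ fixed, which contributes a failure probability $e^{-c''m\log(ed)}$ that does not depend on $\varepsilon$. Added to the $\|P\|_W$ small-ball term, this constant cannot be absorbed into $(c\tilde c_0\varepsilon)^{c'm\log(ed)}$ as $\varepsilon\to 0$, so the stated bound fails for small $\varepsilon$. The paper's remedy is to let the $\|P\|_\infty$ threshold depend on $\varepsilon$, taking $t:=\sqrt{\log(1/\varepsilon)}$ so that the Sub-Gaussian failure probability becomes $\varepsilon^{m\log(ed)}$ and matches the form of the other term; the resulting factor $\sqrt{\log(1/\varepsilon)}$ in the denominator of the $\tilde\kappa$ threshold is then absorbed by adjusting the absolute constants. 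With that modification, and the simplified deterministic step, your argument closes.
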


\begin{proof}
\noindent First note that Theorem \ref{kellog} implies that for every 
$ x, y \in S^{n-1}$ we have  
$$ \| d_j^{-1} D^{(1)} p_{j} (x) y\|_{2}^{2} \leq  \| p_{j}\|_{\infty}^{2}. $$
So we have $\| M^{-1} D^{(1)} P (x) (y) \|_{2}^{2} \leq \sum_{j=1}^{m} 
\|p_{j}\|_{\infty}^{2} \leq m \| P \|_{\infty}^{2}$. Now  
recall that\\ 
\mbox{}\hfill $\mathcal{L}^{2} (x,y) := \| M^{-1} D^{(1)} P (x) (y) \|_{2}^{2}  + \| p (x) \|_{2}^{2}$.\hfill\mbox{}\\ 
So we get $L^{2}(P):= \min\limits_{x\perp y}  
\mathcal{L}^{2}(x, y) \leq ( m +1) 
\| P \|_{\infty}^{2}$, which in turn implies that 
 $$ \tilde{\kappa}(P) \geq \frac{ \|P\|_W}{ L(P)} \geq \frac{\|P\|_W}{ 
\|P \|_{\infty}\sqrt{m+1}}  . $$
The proof for the case where $d_j=d$ for all $j \!\in\!\{1,\ldots,m\}$ is 
identical. 
 
We now show that, under our Euclidean Small Ball Assumption, we have that
\linebreak  
\scalebox{.95}[1]{$\mathrm{Prob} \left( \| P\|_W \leq \varepsilon 
\sqrt{N}\right) \leq \left( c 
\tilde{c_0 } \varepsilon\right)^{ c N \frac{ \min_j N_{j}}{\max_j N_{j}}}$  
for every $\varepsilon\!\in\!(0,1)$. Indeed, recall that 
\mbox{$\|p_{j} \|_W = \| C_{j} \|_{\ell_{2}^{N_{j}}}$}.}\linebreak 
\scalebox{.97}[1]{Then $\mathrm{Prob}\left( \|p_{j} \|_W \leq \varepsilon 
\sqrt{N_{j}} \right) 
\leq \left( \tilde{c_0} \varepsilon\right)^{N_{j}} \leq  \left( \tilde{c_0} 
\varepsilon\right)^{N_{j_0}}$ for any fixed $\varepsilon\!\in\!(0,1)$,   
where \mbox{$j_0\!\in\!\{1,\ldots,m\}$}}\linebreak  
satisfies $N_{j_0}:= \min_j N_{j}$. 
Let $\xi_{j}:= \frac{ \|p_{j}\|_W}{ \sqrt{N_{j}}}$ for any $j\!\in\!\{1,\ldots,
m\}$. Set $ \xi:= (\xi_{1}, \cdots , \xi_{m})$ \linebreak 
\scalebox{.95}[1]{and  
$T:= {\rm diag}( \sqrt{N_{1}}, \cdots , \sqrt{N_{m}})$. Note that 
$\|P\|_W = \| T \xi\|_{2}$, $ \|T\|_{HS} = \sqrt{\sum_{j=1}^{m} N_{j}}=
\sqrt{N}$, and}\linebreak 
\scalebox{.95}[1]{$\|T\|_{op} := \max_{1\leq j \leq m} \sqrt{N_{j}}$. Then 
Lemma \ref{Aniso-p} implies 
$\mathrm{Prob} \left( \|P\|_W \leq \varepsilon \sqrt{N}\right) \leq 
\left( c \tilde{c_0} \varepsilon\right)^{ c N \frac{ \min_j 
N_j}{ \max_j N_j}}$.}\linebreak  
Recall that Lemma  \ref{operatornorm} implies that for every $t\geq 1$ we 
have   
$$ \mathrm{Prob} \left( \| p\|_{\infty} \geq c t   K \sqrt{m} \log(ed)  
\right) \leq e^{-t^{2} m\log(ed)}.$$
So using our lower bound estimate for the condition number, we get 
$$ \mathrm{Prob} \left( \frac{ \|P\|_W}{ \| P \|_{\infty}} \geq 
\frac{ c^{\prime} \varepsilon \sqrt{N}}{tK \sqrt{m} \log(ed)} \right) \leq 
\mathrm{Prob} \left( \tilde{\kappa} (P) \geq \frac{ c \varepsilon \sqrt{N}}
{t Km d \log(ed)} \right), $$
$$ \mathrm{Prob} \left( \{ \|P\|_W \geq c^{\prime}\varepsilon\sqrt{N}\} \cap 
\{ \|P\|_{\infty} \leq ct K\sqrt{m} \log(ed)\} \right) \leq \mathrm{Prob} 
\left( \tilde{\kappa} (P) \geq \frac{ c \varepsilon \sqrt{N}}{t Km d \log(ed)} 
\right),$$ 

\noindent and

$$ \mathrm{Prob} \left( \{ \|P\|_W \geq c^{\prime}\varepsilon\sqrt{N}\} \cap 
\{ \|P\|_{\infty} \leq ct K\sqrt{m} \log(ed)\} \right) \geq  1 - \left( c \tilde{c_0} \varepsilon\right)^{ c N \frac{ \min_j 
N_j}{ \max_j N_j}} - e^{-t^{2} m\log(ed)} $$ 

\noindent We may choose $ t:= \sqrt{\log{\frac{1}{\varepsilon}}}$ and, by adjusting  
constants, we get our result. The case where $ d_j=d$ for all 
$j\!\in\!\{1,\ldots,m\}$ is similar. The bounds for the expectation follow by 
integration. \end{proof}

Observe that the dominant factor in the very last estimate of Lemma 
\ref{lower-bound} is $ \sqrt{N}$, which is the normalization coming 
from the Weyl-Bombieri norm of the polynomial system. So it makes sense to 
seek the asymptotic behavior of $ \frac{{\tilde{\kappa}}(P)}{ \sqrt{N}}$. When 
$m=n-1$, the upper bounds we get are exponential with respect to $n$, while 
the lower bounds are not. But when $m=2n-3$ and $ d= d_j$ for all $j\!\in\!\{1,\ldots,m\}$, we have the following upper bound (by Theorem \ref{expectation}) 
and lower bound (by Theorem \ref{lower-bound}): 
$$\frac{A_1}{ n d \log(ed)}  \leq \frac{ \mathbb{E} ( \tilde{\kappa}(P))  }
{\sqrt{N}} \leq  \frac{A_2\log{ed} \max \{ d^8 , n \} }{ \sqrt{n}},$$ 
where $A_1, A_2$ are constants depending on $(K,c_0)$. This suggests that 
our estimates are closer to optimality when $m$ is a constant multiple 
of $n$. 

\smallskip

\begin{rem}
There are similarities between our probability tail estimates 
and the older estimates in the linear case studied in \cite{RV-2}. In 
particular our estimates in the quadratic case $ d=2$, when $m$ is a 
constant multiple of $n$, are quite similar 
to the optimal result (for the linear case) appearing in \cite{RV-2}. 
\dia 
\end{rem}

\section{Acknowledgements}
We would like to thank the anonymous referee for detailed remarks, especially for the comment that helped us to spot a mistake in the previous proof of Theorem \ref{L-theorem}.


\begin{thebibliography}{9}
 \bibitem{BK} {\rm Franck Barthe and Alexander Koldobsky}, 
{\sl ``Extremal slabs in the cube and the Laplace transform,''} Adv.\ Math.\ 
174 (2003),  pp.\ 89--114. 


\bibitem{beltran} Carlos Beltr\'an and Luis-Miguel Pardo, \emph{``Smale's 17th 
problem: Average polynomial time to compute affine and projective 
solutions,''} Journal of the American Mathematical Society \textbf{22} 
(2009), pp.\ 363--385. 

\bibitem{bcss} {\rm Lenore Blum, Felipe Cucker, Mike Shub, and Steve Smale}, 
{\sl Complexity and Real Computation}, Springer-Verlag, 1998. 

\bibitem{Bou} {\rm  Jean Bourgain}, {\sl ``On the isotropy-constant problem 
for $\psi_2$-bodies,''} in {\em Geometric Aspects of Functional Analysis}, 
Lecture Notes in Mathematics, vol.\ 1807, pp.\ 114--121, Springer Berlin 
Heidielberg, 2003.

\bibitem{derand} {\rm Peter Burgisser and Felipe Cucker,} 
{\sl ``On a problem posed by Steve Smale,''} Annals of Mathematics, 
pp.\ 1785--1836, Vol.\ 174 (2011), no.\ 3.  
 
\bibitem{cond} {\rm Peter Burgisser and Felipe Cucker,} {\sl Condition,} 
Grundlehren der mathematischen Wissenschaften, no.\ 349, Springer-Verlag, 2013. 
\bibitem{cannyphd} John F.\ Canny, {\it The Complexity of Robot
Motion Planning,} ACM Doctoral Dissertation Award Series, MIT Press,
1987.

\bibitem{pardo} {\rm D.\ Castro, Juan San Mart\'{\i}n, Luis M.\ Pardo}, 
{\sl ``Systems of Rational Polynomial Equations have Polynomial Size 
Approximate Zeros on the Average,''} Journal of Complexity 19 (2003), 
pp.\ 161--209. 

\bibitem{cucker} Felipe Cucker, {\sl ``Approximate zeros and condition 
numbers,''} Journal of Complexity 15 (1999), no.\ 2, pp.\ 214--226. 

\bibitem{M1} {\rm Felipe Cucker, Teresa Krick, Gregorio Malajovich, and Mario 
Wschebor}, 
{\sl ``A numerical algorithm for zero counting I. Complexity and accuracy,''}  
J.\ Complexity 24 (2008), no.\ 5--6, pp.\ 582--605

\bibitem{M2} {\rm Felipe Cucker, Teresa Krick, Gregorio Malajovich, and Mario 
Wschebor}, 
{\sl ``A numerical algorithm for zero counting II. Distance to ill-posedness 
and smoothed analysis,''} J.\ Fixed Point Theory Appl.\ 6 (2009), no.\ 2, 
pp.\ 285--294.

\bibitem{M3} {\rm Felipe Cucker, Teresa Krick, Gregorio Malajovich, and Mario 
Wschebor}, 
{\sl ``A numerical algorithm for zero counting III: Randomization and 
condition,''}  Adv.\ in Appl.\ Math.\ 48 (2012), no.\ 1, pp.\ 215--248.

\bibitem{DP} {\rm Nikos Dafnis and Grigoris Paouris}, {\sl ``Small ball 
probability 
estimates, $\Psi_2$-behavior and the hyperplane conjecture,''} Journal of 
Functional Analysis 258 (2010), pp.\ 1933--1964.

\bibitem{DS1} {\rm Jean-Piere Dedieu, Mike Shub}, {\sl ``Newton's Method for 
Overdetermined Systems Of Equations,''} Math.\ Comp.\ 69 (2000), no.\ 231, 
pp.\ 1099--1115. 

\bibitem{demmel} {\rm James Demmel, Benjamin Diament, and Gregorio 
Malajovich}, 
{\sl ``On the Complexity of Computing Error Bounds,''} Found.\ Comput.\ Math.\ 
pp.\ 101--125 (2001). 


\bibitem{hoeffding} Wassily Hoeffding, {\sl ``Probability inequalities for 
sums of bounded random variables,"} 
Journal of the American Statistical Association, 58 (301):13--30, 1963. 

\bibitem{kellog} {\rm O.\ D.\ Kellog}, {\sl ``On bounded polynomials in 
several variables,''} Mathematische Zeitschrift, December 1928, Volume 27, 
Issue 1, pp.\ 55--64. 

\bibitem{KM} {\rm Bo'az Klartag and Emanuel Milman}, {\sl ``Centroid bodies 
and the logarithmic Laplace Transform -- a unified approach,''} J.\ Func.\ 
Anal., 262(1):10--34, 2012.  

\bibitem{kp} Alexander Koldobsky and Alain Pajor, {\sl ``A Remark on Measures 
of Sections of $L_p$-balls,''} Geometric Aspects of Functional Analysis, 
Lecture Notes in Mathematics 2169, pp.\ 213--220, Springer-Verlag, 2017. 

\bibitem{lairez} Pierre Lairez, \emph{``A deterministic algorithm to compute 
approximate roots of polynomial systems in polynomial average time,''} 
Foundations of Computational Mathematics, DOI 10.1007/s10208-016-9319-7. 

\bibitem{kostlan} Eric Kostlan, {\sl ``On the Distribution of Roots of 
Random Polynomials,''}  Ch.\ 38 (pp.\ 419--431) of {\em From Topology to 
Computation: Proceedings of Smalefest (M.\ W.\ Hirsch, J.\ E.\ Marsden, 
and M.\ Shub, eds.)}, Springer-Verlag, New York, 1993. 

 
\bibitem{Ng} {\rm Hoi H.\ Nguyen}, {\sl ``On a condition number of general 
random polynomial systems,''} Mathematics of Computation (2016) 85, 
pp.\ 737--757


\bibitem{RV} {\rm Mark Rudelson and Roman Vershynin}, {\sl ``The 
Littlewood-Offord Problem and Invertibility of Random Matrices,''} 
Adv.\ Math.\ 218 (2008), no.\ 2, pp.\ 600--633.

\bibitem{RV-2} {\rm Mark Rudelson and Roman Vershynin}, {\sl ``The Smallest 
Singular 
Value of Rectangular Matrix,''} Communications on Pure and Applied Mathematics 
62 (2009), pp.\ 1707--1739. 

\bibitem{RV-1} {\rm Mark Rudelson and Roman Vershynin}, {\sl ``Small ball 
Probabilities for Linear Images of High-Dimensional Distributions,''} Int.\ 
Math.\ Res.\ Not.\ (2015), no.\ 19, pp.\ 9594--9617. 

\bibitem{shafa} {\rm Igor R.\ Shafarevich}, {\sl Basic
Algebraic Geometry 1: Varieties in Projective Space}, 3rd edition, 
Springer-Verlag (2013). 

\bibitem{SS} {\rm Mike Shub and Steve Smale}, {\sl ``Complexity of Bezout's 
Theorem I. Geometric Aspects,''} J.\ Amer.\ Math.\ Soc.\ 6 (1993), no.\ 2, 
pp.\ 459--501. 

\bibitem{V} {\rm Roman Vershynin}, {\sl ``Introduction to the Non-Asymptotic 
Analysis of Random Matrices,''} Compressed sensing, pp.\ 210--268, Cambridge 
Univ.\ Press, Cambridge, 2012.


\bibitem{NZ} {\rm Assaf Naor and Artem Zvavitch}, {\sl ``Isomorphic embedding of 
$\ell_p^n$, $1 < p < 2$, into $\ell_1^{(1+\varepsilon)n}$,''} 
Israel J.\ Math.\ {\bf 122} (2001), pp.\ 371--380.

\end{thebibliography}
\end{document}